\documentclass{amsart}
\usepackage{graphicx} % Required for inserting images

\usepackage{amsmath}
\usepackage{amssymb}
\usepackage{amsthm}

\usepackage{dsfont}

\usepackage{esint}

\usepackage{mathrsfs}

\theoremstyle{theorem}
\newtheorem{theorem}{Theorem}[section]

\theoremstyle{definition}
\newtheorem{definition}[theorem]{Definition}

\theoremstyle{definition}

\theoremstyle{definition}

\theoremstyle{theorem}
\newtheorem{lemma}[theorem]{Lemma}

\theoremstyle{theorem}

\theoremstyle{theorem}

\theoremstyle{corollary}
\newtheorem{corollary}[theorem]{Corollary}

\theoremstyle{remark}
\newtheorem{remark}[theorem]{Remark}

\DeclareMathOperator{\tr}{tr}

\usepackage{polynom}

\DeclareMathOperator{\End}{End}

\DeclareMathOperator{\Hom}{Hom}

\DeclareMathOperator{\ind}{ind}

\DeclareMathOperator{\ch}{ch}
\DeclareMathOperator{\Td}{Td}

\DeclareMathOperator{\vol}{vol}

\DeclareMathOperator{\delbar}{\overline{\partial}}

\usepackage{comment}
\usepackage{appendix}

\usepackage{tikz-cd}
\usetikzlibrary{matrix,arrows,decorations.pathmorphing}

\title{Compact multi-monopole moduli spaces on K\"ahler surfaces}
\author{Ollie Thakar}
\address{\parbox{\linewidth}{Department of Mathematics, Harvard University, Massachusetts, 02138}}
\email{othakar@math.harvard.edu}

\begin{document}

\maketitle

\begin{abstract}
    We consider the multi-spinor Seiberg-Witten equations on a 4-manifold in a general setting, where we allow the auxiliary bundle to be non-trivial. We show that when the metric is K\"ahler and the auxiliary connection is unobstructed Yang-Mills, the moduli space of solutions is compact for all parameters in some open neighborhood of the given metric and auxiliary connection. We use this fact to compute a count of solutions on some examples of K\"ahler surfaces. These computations have a surprising purely algebro-geometric corollary which may be of independent interest.
\end{abstract}

\section{Introduction}

The multi-spinor Seiberg-Witten equations have come to the attention of geometers recently as an example of a gauge-theoretic system of equations with a non-compact moduli space of solutions \cite{Bryan-Wentworth, Dedushenko-Gukov-Putrov, Taubes16}, a tool to understand $\mathbb{Z}/2$-harmonic spinors \cite{Doan-Walpuski}, and an ingredient in a putative enumerative invariant of $G_2$ manifolds \cite{haydys-speculation, walpuski-speculation}.

In this paper, we will define a count from the multi-spinor Seiberg-Witten equations on 4-manifolds, depending on several choices of data associated with these equations, and compute the value of this quantity for certain K\"ahler surfaces. The key idea allowing us to perform these computations is as follows: while the multi-spinor Seiberg-Witten moduli spaces are not compact in general, when the auxiliary connection and metric are in a small open neighborhood of an unobstructed Yang-Mills connection and K\"ahler metric, the moduli space is compact.

\subsection{The multi-spinor Seiberg-Witten Equations}
The multi-spinor Seiberg-Witten equations are gauge-theoretic equations on a closed, smooth, oriented 4-manifold $X$ with the following additional data:

\begin{itemize}
    \item A Riemannian metric $g$
    \item A $SU(N)$-bundle $E\to X$ with $k:= c_2(E)\geq0.$
    \item A $SU(N)$-compatible connection $B\in\mathscr{A}(E)$
    \item A spin$^c$-structure $\mathfrak{s}$ with spinor bundles $W^\pm\to X,$ determinant bundle $S,$ and with Clifford multiplication $\rho:T^*X\otimes \mathbb{C}\to\Hom(W^\pm, W^\mp)$
    \item A self-dual 2-form $\eta\in\Omega^+_X$ we will use to perturb the equations
\end{itemize}

The equations ask for a unitary connection $A'$ on the determinant bundle $S$, and a section $\Phi\in\Gamma_X(W^+\otimes E)$ which will be referred to as a \emph{twisted spinor.} There is a unique spin$^c$ connection on $W^\pm$ which induces the connection $A'$ on the determinant bundle, and we will also label this connection as $A'.$ The equations are: \begin{equation}\label{msw}\begin{cases}
    D_{A'\otimes B}^+\Phi &= 0 \\
    \rho(F_{A'}^++\sqrt{-1}\eta) -\mu(\Phi, \Phi) &= 0, \\
\end{cases}\end{equation} where $\mu:(W^+\otimes E)\times\overline{(W^+\otimes E)}\to \mathfrak{sl}(W^+)$ is a bilinear map given by: $$\mu(\Phi, \Psi) := \tr_E(1-\tr_{W^+})(\Phi\otimes \Psi^*).$$

These equations are preserved by the action of the gauge group $\mathcal{G} = \text{Map}(X, S^1)$ given as $u\cdot (A', \Phi) = (A' - u^{-1}du, u\cdot \Phi).$ The moduli space of such solutions, modulo unitary gauge, is to be called $\mathcal{M}(X, g, B, \eta).$

\begin{remark}
    Solutions to the multi-spinor Seiberg-Witten equations will occasionally be referred to as \emph{multi-monopoles.}
\end{remark}

\subsection{Outline and main theorems}

We will first define the \emph{Seiberg-Witten function}, which takes the form of a locally constant map from an open subset $\mathscr{U}_{cpt}$ in the space of choices of $(g, B, \eta)$ to the integers. Informally, this is a count of the number of solutions to the multi-monopole equations corresponding to this data. The subset $\mathscr{U}_{cpt}$ consists of those $(g, B, \eta)$ such that the moduli space $\mathcal{M}(X, \mathbf{p})$ is compact for all $\mathbf{p}$ sufficiently near to $(g, B, \eta)$. When the multi-spinor moduli space is compact and of its formal dimension, this function agrees with the usual definition of Seiberg-Witten invariants. We will state this loosely below, but the formal statement and definitions are found in Theorem \ref{local-invariance}.

\begin{theorem}
    Suppose that $b^+(X) \geq 2.$ Then, there exists a locally constant function $\mathscr{U}_{cpt}\to\mathbb{Z},$ denoted as $$\mathbf{p}\mapsto\text{SW}_E(X, \mathbf{p}),$$ which agrees with the evaluation of the appropriate power of the first Chern class of the canonical line bundle on the configuration space on the moduli space $\mathcal{M}(X, \mathbf{p})$, when $\mathcal{M}(X, \mathbf{p})$ is a transversely cut-out smooth manifold.
\end{theorem}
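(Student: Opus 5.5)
The plan is to imitate the standard construction of Seiberg--Witten invariants, but to carry it out locally in parameter space, since compactness is only available on $\mathscr{U}_{cpt}$.

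\textbf{Step 1 (local transversality).} Fix $\mathbf{p}_0=(g_0,B_0,\eta_0)\in\mathscr{U}_{cpt}$, and choose a neighbourhood $V\ni\mathbf{p}_0$ on which $\mathcal{M}(X,\mathbf{p})$ is compact for every $\mathbf{p}\in V$.
\begin{itemize}
\item Work with the irreducible configuration space $\mathcal{B}^*=(\mathscr{A}(S)\times(\Gamma(W^+\otimes E)\setminus 0))/\mathcal{G}$, after Sobolev completion.
\item Because $b^+(X)\ge 2$, the reducible locus $\{\Phi=0,\ F_{A'}^++\sqrt{-1}\eta=0\}$ has codimension $b^+\ge 2$ in the space of $\eta$. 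After shrinking $V$ (or choosing parameters generically), no reducibles occur for any $\mathbf{p}\in V$, nor along generic paths in $V$.
\item Run the Sard--Smale argument on the universal moduli space over the $\eta$-parameters. The key input is that variations in $\eta$ alone surject onto the cokernel of the deformation operator at an irreducible solution. This follows the usual unique-continuation argument: if $\Phi\not\equiv 0$ and $(\psi,\omega)$ is $L^2$-orthogonal to the image, then $\omega=0$.
\item Conclude that generic $\mathbf{p}\in V$ give smooth moduli spaces, cut out transversely, of the expected dimension $d$ given by the index formula. We may also take $V$ path-connected.
\end{itemize}

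\textbf{Step 2 (definition).} On $\mathcal{B}^*$ the circle bundle $(\mathscr{A}\times(\Gamma\setminus 0))/\mathcal{G}_0$, where $\mathcal{G}_0$ is the based gauge group, determines a complex line bundle $\mathcal{L}$; the paper's ``canonical line bundle'' is presumably this one. Define
\[
\text{SW}_E(X,\mathbf{p}) := \begin{cases} \langle c_1(\mathcal{L})^{d/2},[\mathcal{M}(X,\mathbf{p})]\rangle & \text{if } d\ge 0 \text{ is even},\\ 0 & \text{otherwise,}\end{cases}
\]
for transverse $\mathbf{p}$. The fundamental class requires an orientation, obtained from a homology orientation of $X$ together with the complex structure on the index bundle of the twisted Dirac operator $D^+_{A'\otimes B}$. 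For non-generic $\mathbf{p}\in V$, set $\text{SW}_E(X,\mathbf{p})$ equal to the value at any nearby transverse parameter. Step 3 shows this is well defined.

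\textbf{Step 3 (invariance, the main work).} Take two transverse parameters $\mathbf{p}_0,\mathbf{p}_1\in V$ and join them by a generic path $\gamma$ in $V$. The parametrized moduli space $\mathcal{M}_\gamma=\bigcup_t\mathcal{M}(X,\gamma(t))$ is then a smooth oriented cobordism between the two endpoint moduli spaces. It contains no reducibles, because $b^+\ge2$.

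Compactness of $\mathcal{M}_\gamma$ is the main obstacle. Unlike the classical case, there is no a priori Weitzenböck bound here: multi-spinor moduli spaces can be noncompact, with $|\Phi|\to\infty$ along sequences of solutions. Pointwise compactness of each fibre does not immediately give compactness of the family. The plan to get it is:
\begin{itemize}
\item Argue that the condition ``$\mathcal{M}(X,\mathbf{p})$ compact for all nearby $\mathbf{p}$'' upgrades to a uniform bound on $\|\Phi\|_{L^2}$ over a compact neighbourhood $K\subset V$ of $\mathbf{p}_0$, shrinking $V$ if necessary.
\item To do this, suppose instead that $\mathbf{p}_n\to\mathbf{p}_\infty\in K$ with $\|\Phi_n\|\to\infty$. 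Rescale to $\Phi_n/\|\Phi_n\|$ and apply a Taubes-type blow-up analysis. Alternatively, use the paper's later characterization of $\mathscr{U}_{cpt}$ if it gives the bound directly.
\item Then, after rescaling, the limit is a nonzero blown-up (Fueter-type) limiting configuration for the parameters $\mathbf{p}_\infty$. Such limits arise from sequences of solutions, which would contradict the choice of neighbourhood (the paper's definition of $\mathscr{U}_{cpt}$ presumably rules these out).
\item Once $\|\Phi\|$ is bounded, elliptic bootstrapping and Uhlenbeck-type gauge fixing, which is abelian here, give compactness of $\mathcal{M}_\gamma$.
\end{itemize}

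Given compactness, the restriction of $c_1(\mathcal{L})^{d/2}$ to $\mathcal{M}_\gamma$ shows, via Stokes' theorem or bordism invariance, that $\text{SW}_E(X,\mathbf{p}_0)=\text{SW}_E(X,\mathbf{p}_1)$. Hence the function is constant on $V$, i.e.\ locally constant on $\mathscr{U}_{cpt}$. By construction it agrees with the evaluation of $c_1(\mathcal{L})^{d/2}$ on $[\mathcal{M}(X,\mathbf{p})]$ whenever the moduli space is transversely cut out.
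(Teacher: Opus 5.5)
Your overall architecture matches the paper's: transversality, avoiding reducibles via $b^+(X)\ge 2$, orientations, and a cobordism along a generic path. But the transversality step has a genuine gap.

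Perturbing $\eta$ only kills the $i\Omega^+_X$-component $\omega$ of a cokernel element $(\psi,\omega)$. The real content is showing $\psi=0$. What remains is a $\psi\in\Gamma(W^-\otimes E)$ that solves the adjoint twisted Dirac equation and, pointwise and up to the gauge-fixing term, satisfies $\operatorname{Re}\langle\rho(a)\Phi,\psi\rangle=0$ for all $a\in iT^*_xX$. In classical Seiberg--Witten theory, $a\mapsto\rho(a)\Phi(x)$ is a real isomorphism onto $W^-_x$ wherever $\Phi(x)\neq 0$. So $\psi$ vanishes on an open set and unique continuation finishes the argument. Here $(W^-\otimes E)_x$ has real dimension $4N$, so for $N\ge 2$ these four real conditions do not force $\psi(x)=0$, and the argument stalls.

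This is why the paper's Lemma~\ref{generic} also perturbs $B$. A variation $b\in\Omega^1_X(\mathfrak{su}(E))$ adds $\rho(b)\Phi$ to the Dirac component. The pointwise map $(a,b)\mapsto\rho(a+b)\Phi(x)$, i.e.\ $T^*_xX\otimes\mathfrak{u}(E_x)$ acting on $\Phi(x)$, is onto $(W^-\otimes E)_x$ whenever $\Phi(x)\neq 0$; the paper takes this input from \cite[Lemma 2.20]{Doan}. Unique continuation for the twisted Dirac operator then gives $\psi\equiv 0$. Since $B$ is part of the parameter you are allowed to do this, and you must, both for single parameters and for generic paths.

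In Step~3 you correctly spot a point the paper's proof does not treat separately; it is absorbed into the citation of \cite[Theorem 1.3]{Doan}. Compactness of every $\mathcal{M}(X,\mathbf{p})$ with $\mathbf{p}\in V$ does not by itself make $\mathcal{M}_\gamma$ compact. However, your resolution does not follow from the definition of $\mathscr{U}_{cpt}$.

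A limiting configuration at $\mathbf{p}_\infty$, obtained via Theorem~\ref{taubes} from solutions at varying parameters $\mathbf{p}_n$, is not itself a solution. So $\mathcal{M}(X,\mathbf{p}_\infty)$ can still be compact. Likewise, each $\mathcal{M}(X,\mathbf{p}_n)$ can be compact while $\sup\|\Phi\|_{L^2}$ diverges only as $n\to\infty$. Nothing in ``interior of $\mathscr{P}_{cpt}$'' is contradicted.

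What your argument actually needs is that $\mathbf{p}_0$ admits no limiting configurations, or directly a uniform bound on $\|\Phi\|_{L^2}$ over a neighbourhood. Given that, Theorem~\ref{taubes} with varying parameters yields the uniform bound, exactly as in the proof of the strong compactness theorem in Section~4. For the WOYM parameters the paper actually uses, this hypothesis is supplied by Lemma~\ref{unobstructed}.
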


The remainder of this paper will focus on the case where $(X, g)$ is K\"ahler, with K\"ahler form $\omega,$ and $B$ has $F_B^{2,0} = 0,$ so that it defines a holomorphic structure $E_B$ on the bundle $E.$ In Section 3, we will find a holomorphic interpretation of the moduli space in this case.

In general, it seems somewhat intractable to prove that the space of parameters for which the moduli space is compact is open in the space of all parameters. It is already a highly subtle question to determine when t$\mathscr{U}_{cpt}$ is empty. However, In Section 4, we will partially address both of these concerns, by showing that when $(g, \omega)$ is K\"ahler and $B$ is unobstructed, $(g, B, 2r\omega) \in \mathscr{U}_{cpt}$. We state a version of the theorem here, but the strongest version of the theorem will be stated as Theorem \ref{strong_compact}.

\begin{theorem}\label{strong_compact}
    If $H^2(X; \End_0 E_B) = 0$ and $r\in\mathbb{R}_+$ is sufficiently large, then there exists some open neighborhood $\mathcal{U}$ of $\mathbf p = (g, B, 2r\omega)$ in the space $\text{Met}(M)\times \mathcal{A}(E)\times \Omega^2(M)$ such that for any $(g', B', \eta') \in \mathcal{U}$ the moduli space $\mathcal{M}(X, g', B', \eta')$ is compact, and so $\mathbf p \in \mathscr{U}_{cpt}.$
\end{theorem}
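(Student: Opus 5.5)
We argue by contradiction via a compactness/rescaling argument, using the Kähler picture at the central parameter and the known source of non-compactness for multi-spinor equations. In this setting compactness fails only when the $L^2$ norm of $\Phi$ is unbounded. So the proof splits into two steps: an a priori $L^2$ bound for $\Phi$ implies compactness, and that bound holds uniformly near $\mathbf p$.

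The first step is standard. Given $\|\Phi\|_{L^2}\le C$ uniformly over parameters in a compact neighborhood, the Weitzenböck formula for $D^+_{A'\otimes B'}$, combined with the curvature equation, gives the usual maximum-principle argument for $|\Phi|^2$. The point is that $\langle \rho(F_{A'}^+)\Phi,\Phi\rangle$ contributes a positive quartic term $|\mu(\Phi,\Phi)|^2$. The twisting curvature $F_{B'}$ enters only linearly in $|\Phi|^2$, so it is harmless. This yields a $C^0$ bound on $\Phi$, hence on $F_{A'}^+$, and therefore on $F_{A'}$ since $c_1(S)$ is fixed. Elliptic bootstrapping in Coulomb gauge then gives compactness modulo gauge. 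It therefore suffices to rule out a sequence $\mathbf p_i=(g_i,B_i,\eta_i)\to\mathbf p$ with solutions $(A_i,\Phi_i)$ satisfying $\|\Phi_i\|_{L^2}\to\infty$.

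For the second step, rescale to $\Psi_i=\Phi_i/\|\Phi_i\|_{L^2}$. The standard Taubes/Haydys–Walpuski-type analysis (Taubes16, Doan–Walpuski), adapted to the twisted setting, gives convergence of $\Psi_i$ away from a closed singular set $Z$ of Hausdorff codimension at least $2$. The limit $\Psi$ is a nonzero section of $W^+\otimes E$ satisfying $\mu(\Psi,\Psi)=0$ and $D^+_{A\otimes B}\Psi=0$ on $X\setminus Z$, where $A$ is a limiting flat-type connection on a line bundle over $X\setminus Z$ (possibly with $\mathbb{Z}/2$-type monodromy).

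Now use the Kähler structure at the limit parameter. Decompose $W^+=\Lambda^{0,0}\oplus\Lambda^{0,2}$, so $\Psi=(\alpha,\beta)$ with $\alpha\in\Gamma(L\otimes E)$ and $\beta\in\Gamma(L\otimes K^{-1}\otimes E)$. The condition $\mu=0$ forces $\alpha\alpha^*$ and $\beta\beta^*$ to have equal traces and $\langle\alpha,\beta\rangle_E=0$. An integration-by-parts argument on the Dirac equation, justified across $Z$ by the decay estimates, splits $D^+\Psi=0$ into $\bar\partial_{A\otimes B}\alpha=0$ and $\bar\partial^*\beta=0$. This also forces $\beta=0$ or $\alpha=0$; choosing $r$ large kills the $\beta$ branch, as in Witten's Kähler argument, using the sign of $\langle F^+,\omega\rangle$. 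Then $\mu=0$ forces $\alpha\alpha^*$, viewed in $E$, to have rank one and zero trace-free part in $\mathfrak{sl}(W^+)$. This gives $|\alpha|^2$ both equal to $0$ and nonzero, a contradiction.

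I expect the delicate point to be precisely that the vanishing of $\mu$ does not immediately force $\Psi=0$. Here $\mu$ takes the $E$-trace, so $\mu(\Psi,\Psi)=0$ leaves room for $\alpha$ and $\beta$ to be $E$-orthogonal. The missing input is the unobstructedness $H^2(\End_0E_B)=0$. A nonzero limit yields, via $\alpha\otimes\bar\beta^*$, a nonzero $\bar\partial$-closed section pairing into $H^0(K\otimes\End_0 E_B)\cong H^2(\End_0E_B)^*$ by Serre duality. Indeed $\bar\partial\alpha=0$ and $\bar\partial\bar\beta=0$ give a holomorphic section of $K\otimes E\otimes E^*$, trace-free because $\langle\alpha,\beta\rangle=0$. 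So I would carry out the argument by showing that both components survive in the limit, and then let $H^2=0$ produce the contradiction. The Yang–Mills condition on $B$ would control $F_B$ and secure this holomorphic splitting. The main obstacle is making the limit regular enough across $Z$, via a Hartogs-type extension in complex codimension one, for the holomorphic section to exist globally.
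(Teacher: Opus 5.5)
Your closing plan is the paper's proof. Taubes' dichotomy, applied along a sequence of parameters converging to $\mathbf p$, reduces everything to excluding a limiting configuration for the central Kähler data $(g,B)$. On $X-Z$ one has $\delbar_{AB}\alpha=0=\delbar^*_{AB}\beta$, and the traceless section $\alpha\otimes\beta^*$ of $\End_0E_B\otimes K_X$ is then killed by $H^0(\End_0E_B\otimes K_X)\cong H^2(\End_0E_B)^*=0$. As written, however, the proposal has one wrong step, and it leaves the decisive extension step unresolved with the wrong tool.

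\textbf{The fourth paragraph should be deleted.} Nothing forces $\alpha=0$ or $\beta=0$ before $H^2=0$ is used. Also, large $r$ cannot kill the $\beta$ branch. After rescaling, the perturbation enters as $\epsilon_j r$ with $\epsilon_j\to0$, so $r$ plays no role in a limiting configuration. The Witten-type integral identity degenerates to $\|\alpha\|_{L^2}=\|\beta\|_{L^2}$. Your later worry about showing that ``both components survive'' is also unnecessary: $\mu(\Psi,\Psi)=0$ gives $|\alpha|=|\beta|$ pointwise, so both are nonzero wherever $\Psi$ is. The paper runs it in the other direction: it gets $\alpha\otimes\beta^*\equiv0$, uses unique continuation to make one factor vanish, and then $|\alpha|=|\beta|$ kills the other. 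Separately, in your first step the quartic term $|\mu(\Phi,\Phi)|^2$ is nonnegative but not coercive; that is exactly the source of non-compactness. So the $C^0$ bound comes from $\Delta|\Phi|^2\le C|\Phi|^2$ plus a mean-value inequality given the $L^2$ bound, not from a maximum principle. This is part (1) of Taubes' theorem as the paper uses it.

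\textbf{The extension of $\alpha\otimes\beta^*$ across $Z$ needs boundedness, not Hartogs.} A Hartogs argument fails across complex codimension one; consider $1/z_1$. You should also not rely on a Hausdorff-dimension bound for $Z$, which the paper notes Taubes proves only for rank $2$. The missing observation is that $|\alpha\otimes\beta^*|\le|\Psi|^2$ and $|\Psi|$ extends continuously to $X$ with $|\Psi|=0$ on $Z$. Hence $\alpha\otimes\beta^*$ extends by zero to a continuous section that is holomorphic off $Z$. The paper then observes that $\log|\alpha\otimes\beta^*|$ is plurisubharmonic with $Z$ contained in its $-\infty$ locus, so $Z$ is pluripolar. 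Bounded holomorphic functions extend across closed pluripolar sets; in effect this is Rad\'o's theorem. With this the argument closes. It uses only $F_B^{2,0}=0$ at the limit, not a Yang--Mills condition on $B$ and not $r$ large.
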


Taubes has proved a beautiful compactness theorem for the multi-spinor Seiberg-Witten equations on a general Riemannian 4-manifold \cite{Taubes16}, which will play a central role in our strong compactness theorem. (See Section 4.2). In Section 5 we will show that the convergence in Taubes' theorem can be strengthened in the K\"ahler case. This strengthening culminates in the computation of the limit of the curvatures of a sequence of multi-monopoles when $g$ is K\"ahler and $F_B^{2,0} = 0$ (Theorem \ref{currents}.)

Finally, in Section 6, we will compute the value of the Seiberg-Witten function on certain K\"ahler surfaces. In Section 6, we will compute the value of the Seiberg-Witten function on certain K\"ahler surfaces. The local invariance of the Seiberg-Witten function allows us to extract the following surprising theorem (see Theorem \ref{algebraicgeometry}.) 

\begin{theorem}
    Let $X$ be a K\"ahler surface with $h^1(\mathscr{O}_X) = 0, h^2(\mathscr{O}_X)>0.$ Let $\mathcal{N}$ be an irreducible component of the moduli space of stable vector bundles on $X$ with holomorphically trivial determinant. Let $\mathscr{L}\to X$ be a holomorphic line bundle.
    
    If there is some $\mathscr{F}_0\in\mathcal{N}$ satisfying the following three conditions: \begin{itemize} 
        \item $H^2(\End_0 \mathscr{F}_0) = 0$
        \item $\chi(\mathscr{F}_0\otimes \mathscr{L})>h^0(\mathscr{F}_0\otimes \mathscr{L})>0$ 
        \item $\chi(\mathscr{F}_0\otimes \mathscr{L})>h^2(\mathscr{O}_X)$
    \end{itemize} then for any $\mathscr{F}\in\mathcal{N}$ with $H^2(\End_0 \mathscr{F}) = 0$, either $$h^0(\mathscr{F}\otimes\mathscr{L}) = h^0(\mathscr{F}_0\otimes \mathscr{L})$$ or $$h^0(\mathscr{F}\otimes \mathscr{L})=2\chi(\mathscr{F}\otimes \mathscr{L}) - h^2(\mathscr{O}_X)-h^0(\mathscr{F}_0) .$$
\end{theorem}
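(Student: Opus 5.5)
The strategy is to compute $\text{SW}_E(X,\mathbf p)$ twice: once at a parameter built from $\mathscr{F}_0$ and once at a parameter built from $\mathscr{F}$. Local constancy of the Seiberg-Witten function along a path in $\mathscr{U}_{cpt}$ then forces the two answers to agree. That equality is the dichotomy in the statement.

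\textbf{Setup.} By Donaldson--Uhlenbeck--Yau, every stable $\mathscr{F}\in\mathcal{N}$ carries a Hermitian--Einstein, hence (with trivial determinant) anti-self-dual $SU(N)$-connection $B_{\mathscr F}$. Take $E$ to be the underlying smooth bundle, twisted by $\mathscr L$ through the choice of spin$^c$ structure, so that twisted spinors correspond to sections of $\mathscr F\otimes\mathscr L$. The hypothesis $H^2(\End_0\mathscr F)=0$ makes $B_{\mathscr F}$ unobstructed. Theorem \ref{strong_compact} therefore gives $(g,B_{\mathscr F},2r\omega)\in\mathscr{U}_{cpt}$ for $r$ large.

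\textbf{Path and invariance.} The unobstructed locus of $\mathcal N$ is a connected complex manifold, since removing a proper analytic subset from an irreducible space keeps it connected. Choose a path from $\mathscr F_0$ to $\mathscr F$ inside it, with $r$ uniformly large along the path by compactness of the path. Each point of the path lies in $\mathscr{U}_{cpt}$ by Theorem \ref{strong_compact}, so by Theorem \ref{local-invariance} the function $\text{SW}_E$ is constant along the path.

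\textbf{Computing each value.} This uses the holomorphic description from Section 3 and the $b^+\geq 2$ setting; note $h^2(\mathscr O_X)>0$ gives $b^+=1+2h^{2,0}\ge 3$. For large $r$ the moduli space should be identified with the projectivization $\mathbb P(H^0(\mathscr F\otimes\mathscr L))$ of the space of nonzero holomorphic sections modulo $\mathbb C^*$. Its formal dimension is governed by the virtual space $H^0-H^1+H^2$ of $\mathscr F\otimes\mathscr L$, together with the obstruction contribution $H^2(\mathscr O_X)$ from the $U(1)$ part, since $h^1(\mathscr O_X)=0$. The moduli space is compact but generally not of the expected dimension. Its count is then the Euler class of an obstruction bundle over $\mathbb P^{n-1}$, with $n=h^0(\mathscr F\otimes\mathscr L)$: the obstruction bundle is $(H^1(\mathscr F\otimes\mathscr L)\otimes\mathcal O(1))\oplus H^2(\mathscr O_X)$-type, paired against the appropriate power of $c_1(\mathcal O(1))$. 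This is a Segre-type class computation and yields something like a binomial coefficient or sign,
\[\text{SW}=\pm\binom{h^0-\chi+h^2(\mathscr O_X)+\cdots}{\cdots},\]
which depends on $n$ only through $n-\chi$ symmetrically. I expect that evaluating this as a function $f(n)$ shows $f(n)=f(n')$ exactly when $n=n'$ or $n+n'=2\chi-h^2(\mathscr O_X)$. The hypotheses $\chi>h^0(\mathscr F_0\otimes\mathscr L)>0$ and $\chi>h^2(\mathscr O_X)$ ensure that $f(h^0(\mathscr F_0\otimes\mathscr L))\neq 0$ and that we are in the range where $f$ is injective up to this reflection. Equating values then gives the dichotomy. (I read the stated $h^0(\mathscr F_0)$ as $h^0(\mathscr F_0\otimes\mathscr L)$.)

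\textbf{Main obstacle.} The hard step is the precise identification of the large-$r$ moduli space and its obstruction bundle, and then the evaluation of the Euler class. In particular I need to check that the contributions from $H^2(\mathscr O_X)$, via the self-dual $(2,0)$ forms, and from $H^1(\mathscr F\otimes\mathscr L)$ produce exactly a function with the reflection symmetry $n\mapsto 2\chi-h^2(\mathscr O_X)-n$. I would first do the Kuranishi model computation from Section 3, then compute the top Chern class of $\mathcal O(1)^{\oplus m}$-type bundles by hand.
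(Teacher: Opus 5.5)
Your outer architecture is the paper's. You restrict to the locus $\mathcal N^*\subset\mathcal N$ where $H^2(\End_0\mathscr F)=0$, run the strong compactness theorem along a path, use local constancy, and compare explicit values. Your remark that $r$ can be taken uniform by compactness of the path is a point the paper glosses over. The gap is that the step which actually produces the dichotomy is only ``expected'', and both ingredients you sketch for it are off.

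First, your obstruction bundle is wrong. As you write it, $(H^1(\mathscr F\otimes\mathscr L)\otimes\mathcal O(1))\oplus H^2(\mathscr O_X)$ has a trivial summand of rank $p_g=h^2(\mathscr O_X)>0$. Its Euler class therefore vanishes, and you would get $\mathrm{SW}\equiv 0$, which gives no information. What you have dropped are the $\beta$-directions, which sit in degree one of $\mathcal C_{\mathbb C}$. For harmonic $\beta'$ one has $\delta^1(0,0,\beta')=(\tr(\alpha^*\otimes\beta'),0)$, so $H^2(\mathscr F\otimes\mathscr L)\otimes\mathcal O(1)$ cancels against $H^2(\mathscr O_X)$; this map is injective by weak unobstructedness. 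In $K$-theory $[\mathfrak O]=p_g[\underline{\mathbb C}]-m[\mathcal O(1)]$ with $m=\chi-h^0(\mathscr F\otimes\mathscr L)$. Hence $e(\mathfrak O)=[(1+x)^{-m}]_{p_g-m}$, and you recover Theorem \ref{agsw}:
\[
\mathrm{SW}=(-1)^{p_g-m}\binom{p_g-1}{m-1}.
\]
This is nonzero exactly under the hypotheses, with Lemma \ref{a-v} supplying $m\le p_g$.

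Second, once you have this formula, the reflection you posit does not hold. In a fixed row, $\binom{p_g-1}{m-1}=\binom{p_g-1}{m'-1}$ if and only if $m=m'$ or $m+m'=p_g+1$. With the signs of Theorem \ref{agsw}, the two values then agree only when $p_g$ is odd. Writing $m=\chi-n$, this says $n=n_0$ or $n+n_0=2\chi-p_g-1$, not $2\chi-p_g$. Two concrete checks:
\begin{itemize}
\item For $p_g=1$, only $m=1$ gives a nonzero value. So $f(\chi-1)=1\neq 0=f(\chi)$, even though $(\chi-1)+\chi=2\chi-p_g$; your claimed characterization of $f$ fails.
\item For $p_g=3$, the values at $m_0=1$ and $m=3$ are both $1$. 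The stated second alternative does not cover this coincidence.
\end{itemize}
So your final step cannot be completed as written. What the invariance argument actually proves has second alternative $h^0(\mathscr F\otimes\mathscr L)=2\chi-h^2(\mathscr O_X)-1-h^0(\mathscr F_0\otimes\mathscr L)$, and for even $p_g$ only the first alternative survives. The paper's proof also just asserts that Theorem \ref{agsw} ``implies the conclusion'' without doing this arithmetic. The off-by-one therefore appears to be in the statement itself, alongside the $h^0(\mathscr F_0)$ typo you caught. You should derive the condition from the formula rather than reverse-engineer it from the statement.

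A smaller omission: local constancy needs a \emph{continuous} path in $\mathscr A(E)$. Donaldson--Uhlenbeck--Yau only gives a Hermitian--Einstein connection for each $\mathscr F$ separately. The paper gets the continuous lift from Itoh--Nakajima, using openness of the map from irreducible unobstructed HYM connections modulo gauge to holomorphic structures. It then lifts to $\mathscr A(E)$ using that the gauge group acts freely on irreducible connections.
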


\subsection{Historical background}
The 3-manifold version of these equations has attracted notable attention. For instance, Doan has defined an enumerative invariant from this equation in \cite{Doan} that is the direct inspiration for our construction, which has been computed in some special cases \cite{Doan, Thakar, Wilson}.

Multi-spinor moduli spaces on 4-manifolds appear much less common in the literature. They were first considered n the case where $E$ and $B$ are trivial in \cite{Bryan-Wentworth}. In that work, Bryan and Wentworth found an algebro-geometric description of the moduli spaces on a K\"ahler surface; in this paper, we perform the analogous algebro-geometric description in our more general setting. This, along with work on generalized Seiberg-Witten equations by Taubes \cite{Taubes16}, are the key tools allowing us to compute the value of our count on many K\"ahler surfaces. More recently, these equations have been considered from a physical point of view in \cite{Nakajima}, and \cite{Dedushenko-Gukov-Putrov} defined an invariant associated to multi-spinor moduli spaces with $E$ and $B$ trivial in a very different way; it would be interesting to compare their construction with ours. 

\subsection{Notation and Conventions}

Hermitian forms are conjugate linear in the second factor.

The space $\mathscr{A}(E)$ denotes those connections on $E$ compatible with the $SU(N)$-structure on $E.$

\subsection{AI Use Statement}

The author used no AI in the preparation of this article.

\subsection{Acknowledgments}

I would like to thank my advisor Peter Kronheimer as well as Aleksander Doan, Clifford Taubes, and Elie Belkin for sharing invaluable wisdom. This project was made under the support of the Simons Collaboration on New Structures in Low-Dimensional Topology.

\section{The multi-spinor Seiberg-Witten function}

The main goal of this section is to delineate under which conditions our count may be defined and prove very basic properties about it. This can be viewed as the 4-dimensional version of the putative multi-spinor Seiberg-Witten invariant of 3-manifolds explored in, for instance, \cite{Doan}. 

\subsection{Geometric preliminaries}

So, fix a closed, smooth, oriented 4-manifold $X$ with a spin$^c$-structure $\mathfrak{s}$ and an $SU(N)$-bundle $E\to X$. We will define the \emph{space of parameters} $\mathscr{P}$ to be the subset: $$\mathscr{P} \subseteq \text{Met}(X)\times \mathscr{A}(E)\times \Omega^2_X$$ consisting of those $(g, B, \eta)$ such that $\eta$ is self-dual with respect to $g.$ We define the subset $\mathscr{P}_{cpt}\subseteq \mathscr{P}$ to be those $\mathbf{p} = (g, B, \eta) \in \mathscr{P}$ for which the moduli space $\mathcal{M}(X, \mathbf p)$ is compact. We will define the subset $\mathscr{U}_{cpt}\subseteq \mathscr{P}_{cpt}$ to be the interior of $\mathscr{P}_{cpt},$ i.e. those $\mathbf{p}\in \mathscr{P}_{cpt}$ such that for a sequence $\mathbf{p}_i\to\mathbf{p}$ in $C^\infty,$ we have $\mathbf{p}_i\in \mathscr{P}_{cpt}$ for all sufficiently large $i.$

\begin{remark}
    A priori it is far from obvious that $\mathscr{U}_{cpt}$ is non-empty; the following sections will provide reasonable conditions guaranteeing that $\mathscr{U}_{cpt}$ is non-empty when $X$ is a K\"ahler surface.
\end{remark}

We recall the usual set-up necessary to describe Seiberg-Witten moduli spaces and to define Seiberg-Witten invariants. Define the \emph{configuration space} $\mathscr{C} = \mathscr{A}(S)\times \Gamma(W^+\otimes E),$ and the space $\mathscr{B} = \mathscr{C}/\mathcal{G},$ so that for each $\mathbf p\in\mathscr{P},$ the moduli space $\mathcal{M}(X, \mathbf p)\subset \mathscr{B}.$ Choosing a basepoint $x\in X,$ we may define the \emph{framed gauge group} $\mathcal{G}_0\subseteq \mathcal{G}$ to be those $u\in \text{Map}(X, S^1)$ such that $u(x) = 1,$ so that $\mathscr{C}/\mathcal{G}_0$ is the total space of a principal $U(1)$-bundle over $\mathscr{B}.$ Let $\mu\in H^2(\mathscr{B}; \mathbb{Z})$ represent the first Chern class of this bundle (we will abuse notation and let $\mu$ also represent the first Chern class of the restriction of this bundle to each moduli space $\mathcal{M}(X, \mathbf p)\subset \mathscr{B}.$) 

\subsubsection{The deformation complex}

At each $[A', \Phi]\in \mathcal{M}(X, \mathbf p),$ the real-analytic structure of the moduli space is governed by the \emph{deformation complex} $\mathcal{C}_{A', \Phi},$ the chain complex $\mathcal{C}^0\to\mathcal{C}^1\to\mathcal{C}^2$ defined as follows:

$$\Omega^0_X(i\mathbb{R})\xrightarrow{G_{[A', \Phi]}} \Omega^1_X(i\mathbb{R})\times\Gamma(W^+\otimes E)\xrightarrow{F_{A', \Phi}} \Omega^+_X(i\mathbb{R})\times \Gamma(W^-\otimes E),$$ where $$G_{A', \Phi}(f) = (-df, f\Phi)$$ is the linearization of the gauge group action, and $$F_{A', \Phi}(A', \Phi) = \left(\sqrt{-1}d^+a - 2\sqrt{-1}~\text{Im} \mu(\Phi, \phi), D_{A'\otimes B}^+\phi + \rho(a)\Phi\right)$$ is the linearization of Equation (\ref{msw}). For $j=0,1,2,$ let $H^j_{A', \Phi}$ denote the degree $j$ cohomology of the complex $\mathcal{C}_{A', \Phi}.$ 

We say a solution $(A', \Phi)$ is \emph{reducible} if $H^0_{A', \Phi}\neq 0$ (which is easily seen to be equivalent to $\Phi = 0$) and otherwise it is irreducible. Likewise, we say a solution $(A', \Phi)$ is \emph{obstructed} if $H^2_{A', \Phi}\neq 0,$ and otherwise it is unobstructed. We say the moduli space $\mathcal{M}(X, \mathbf p)$ is \emph{unobstructed} if each solution $(A', \Phi)$ is unobstructed, and we say the moduli space $\mathcal{M}(X, \mathbf p)$ is \emph{regular} if each solution is both unobstructed and irreducible. The \emph{virtual dimension} of the moduli space is the index of the complex $\mathcal{C}.$

In general, near each $[A', \Phi]\in\mathcal{M}(X, \mathbf p)$ corresponding to a choice of slice for the gauge group action near $(A', \Phi),$ there is a Kuranishi map $\kappa:H^1_{A', \Phi}\to H^2_{A', \Phi}$ with $\kappa(0) = d\kappa(0),$ such that as a real analytic space a neighborhood of $[A', \Phi]$ in $\mathcal{M}(X, \mathbf p)$ is equivalent to a neighborhood of $\kappa^{-1}(0)\in H^1_{A', \Phi}$ (see, for instance, \cite[Chapter 4]{DK}.)

\subsection{Local invariance}

When $\mathcal{M}(X, \mathbf p)$ is regular, the space $\mathcal{M}(X, \mathbf p)$ is a smooth manifold of dimension equal to the virtual dimension $d$ determined by Atiyah-Singer index theorem. When moreover $\mathbf{p} \in \mathscr{P}_{cpt}$, we may define a ``count of solutions'' to be the integral: $$\int_{\mathcal{M}(X, \mathbf p)} \mu^{d/2}.$$ The main theorem of this section will be the 4-dimensional analog of \cite[Theorem 1.3]{Doan}: 
\begin{theorem}\label{local-invariance}
    Suppose that $b^+(X) \geq 2.$ Then, there exists a locally constant function $\mathscr{U}_{cpt}\to\mathbb{Z},$ denoted as $$\mathbf{p}\mapsto\text{SW}_E(X, \mathbf{p}),$$ which enjoys the property that for generic $\mathbf p \in \mathscr{U}_{cpt}$ we have: $$\text{SW}_E(X, \mathbf{p}) = \int_{\mathcal{M}(X, \mathbf p)} \mu^{d/2},$$ where $d = \dim \mathcal{M}(X, \mathbf p).$
\end{theorem}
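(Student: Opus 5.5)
The plan is the standard Seiberg--Witten/Doan argument: define the function by generic perturbation, then show any two generic nearby choices are cobordant through a compact regular parametrized moduli space.

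\emph{Step 1: transversality.} Form the parametrized moduli space $\mathcal{M}(X,\mathscr{P}) = \bigcup_{\mathbf p} \mathcal{M}(X,\mathbf p)$ inside $\mathscr{B}\times\mathscr{P}$, after Sobolev completion ($L^2_k$ configurations, $C^\ell$ parameters). On irreducible solutions, the linearization of the full map, including variations of $\eta$, is surjective. The reason is the same as in the classical case. A cokernel element $(\alpha,\psi)$ orthogonal to all $\delta\eta$ must have $\alpha=0$. Then $\psi$ lies in the kernel of the adjoint of $\phi\mapsto(\mathrm{Im}\,\mu(\Phi,\phi), D^+_{A'\otimes B}\phi)$, so $(D^+)^*\psi=0$ and $\mathrm{Im}\,\mu(\Phi,\cdot)^*\psi$ vanishes. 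Unique continuation applied to $\Phi\neq0$ on an open set then forces $\psi=0$.

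\emph{Step 2: ruling out reducibles.} Reducibles $\Phi=0$ require $F_{A'}^+ + \sqrt{-1}\eta = 0$, that is, $2\pi c_1(S)^+ = \eta_{harm}$ for the $g$-harmonic part of $\eta$. Since $b^+\ge2$, this is a codimension $b^+$ condition, so it fails on an open dense set. Moreover, generic paths in $\mathscr{P}$ avoid it entirely, because the wall has codimension at least $2$.

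\emph{Step 3: defining the function and proving invariance.} By Sard--Smale, for generic $\mathbf p$ the moduli space $\mathcal{M}(X,\mathbf p)$ is a smooth manifold of dimension $d$. It is oriented by a homology orientation together with the complex orientation of the twisted Dirac index bundle. Since $\mathscr{U}_{cpt}$ is open, given $\mathbf p\in\mathscr{U}_{cpt}$ fix a convex (or path-connected, contractible) neighborhood $\mathcal{V}\subset\mathscr{U}_{cpt}$. Define $\text{SW}_E(X,\mathbf p):=\int_{\mathcal{M}(X,\mathbf p')}\mu^{d/2}$ for any generic $\mathbf p'\in\mathcal{V}$, with value zero if $d$ is odd or negative. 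Two generic $\mathbf p_0,\mathbf p_1\in\mathcal{V}$ are joined by a path $\gamma\subset\mathcal{V}$ which can be perturbed, rel endpoints, to be transverse to the parametrized moduli space and disjoint from reducibles. Then $\mathcal{M}_\gamma=\bigcup_t\mathcal{M}(X,\gamma(t))$ is a smooth oriented $(d+1)$-manifold with boundary $\mathcal{M}(X,\mathbf p_1)\sqcup -\mathcal{M}(X,\mathbf p_0)$. Since $\mu$ extends over $\mathcal{M}_\gamma$ as a pulled-back class from $\mathscr{B}$, Stokes' theorem gives equality of the two integrals. Independence of $\mathcal{V}$ follows because any two such neighborhoods intersect near $\mathbf p$. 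Local constancy is then immediate: every $\mathbf q\in\mathcal{V}$ may use $\mathcal{V}$ itself, which contains a small convex neighborhood of $\mathbf q$.

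\emph{Main obstacle: compactness of $\mathcal{M}_\gamma$.} Each fiber is compact by hypothesis, but we need uniform control along the path. I would argue by contradiction. Take a sequence $(t_i,[A_i',\Phi_i])$ with no convergent subsequence, and pass to $t_i\to t_\infty$. Then $\gamma(t_i)\to\gamma(t_\infty)\in\mathscr{U}_{cpt}$, and since $\mathcal{M}(X,\gamma(t_i))$ is compact for all $i$, a sequence of solutions with $\|\Phi_i\|_{L^2}\to\infty$ would contradict membership of $\gamma(t_\infty)$ in the interior of $\mathscr{P}_{cpt}$. This is the subtle point. Compactness of each individual moduli space does not by itself bound $\|\Phi\|$ uniformly, so the definition of $\mathscr{U}_{cpt}$ has to be used to obtain uniform bounds. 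I would try to show directly that on a compact subset $K\subset\mathscr{U}_{cpt}$, $\sup_{\mathbf p\in K}\sup_{\mathcal{M}(X,\mathbf p)}\|\Phi\|_{L^2}<\infty$, arguing via Taubes' blow-up analysis. A sequence with $\|\Phi_i\|\to\infty$ rescales to a $\mathbb{Z}/2$-harmonic limit for $\gamma(t_\infty)$. Perturbing the parameter near $\gamma(t_\infty)$ would then produce a noncompact moduli space, contradicting the interior condition. If this fails, I would instead redefine $\mathscr{U}_{cpt}$-neighborhoods to include such uniform bounds. Once $\|\Phi\|$ is bounded, the standard elliptic bootstrapping and the $L^2_k$ gauge-fixing argument give sequential compactness of $\mathcal{M}_\gamma$.
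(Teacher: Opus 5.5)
Your overall architecture (Sard--Smale, avoiding reducibles via $b^+\ge 2$, homology orientation, cobordism over generic paths) matches the paper's. However, Step~1 has a genuine gap: in the multi-spinor setting, variations of $\eta$ do not give transversality, and your unique continuation argument only works for $N=1$.

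Once the curvature component of a cokernel element vanishes, what remains is $\psi\in\Gamma(W^-\otimes E)$ satisfying two conditions. From the $\phi$-variations, $(D^+_{A'\otimes B})^*\psi=0$. From the $a$-variations, the pointwise condition $\mathrm{Re}\langle\rho(a)\Phi,\psi\rangle=0$ holds for all imaginary $1$-forms $a$. (The term $\mathrm{Im}\,\mu(\Phi,\cdot)^*\psi$ plays no role, since $\mu$ lands in the component you have already killed.)
\begin{itemize}
\item For $N=1$, the vectors $\rho(a)\Phi(x)$ fill $W^-_x$ wherever $\Phi(x)\neq 0$, so $\psi$ vanishes on an open set and unique continuation finishes.
\item For $N\geq 2$, they span only a real $4$-dimensional subspace of the real $4N$-dimensional fiber $W^-_x\otimes E_x$, and nothing forces $\psi=0$.
\end{itemize}
Concretely, suppose $B=B_1\oplus B_2$ is reducible and $\Phi=(\Phi_1,0)$. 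Then every $(0,\psi_2)$ with $D^-_{A'\otimes B_2}\psi_2=0$ is an obstruction for every choice of $\eta$. This kernel is nonzero whenever $D^+_{A'\otimes B_2}$ has negative index.

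This is exactly why the paper's Lemma~\ref{generic} also perturbs the auxiliary connection $B$. The variations $b\in\Omega^1(\mathfrak{su}(E))$ contribute $\rho(b)\Phi$, which together with $\rho(a)\Phi$ spans the whole fiber wherever $\Phi\neq 0$. Only then does unique continuation give $\psi=0$; the paper cites Doan's Lemma 2.20 plus unique continuation for this step. Your parametrized space already contains $B$, so the repair is available, but the key step must use $\delta B$, not $\delta\eta$.

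Your handling of compactness of $\mathcal{M}_\gamma$ is also incomplete, and the argument you sketch does not work as stated. A limiting configuration for $(g,B)$ at $\gamma(t_\infty)$ does not produce a \emph{single} nearby parameter whose moduli space is noncompact. Nothing excludes solutions escaping to infinity along the path while every fiber $\mathcal{M}(X,\gamma(t))$ stays compact; this is the expected wall-crossing behavior of multi-monopoles. So membership in the interior of $\mathscr{P}_{cpt}$ does not by itself give a uniform bound on $\|\Phi\|_{L^2}$.

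The paper does not address this point explicitly either; it defers to the proof of Doan's Theorem 1.3, which is set up in the absence of limiting configurations (Fueter sections). The hypothesis that actually makes your cobordism argument work is the one your fallback points to. If $(g,B)$ admits no limiting configuration, then Theorem~\ref{taubes}, applied with varying parameters, gives a uniform $L^2$ bound on $\Phi$ for all parameters near $\mathbf{p}$. Part~(1) of that theorem then makes $\mathcal{M}_\gamma$ compact for paths in that neighborhood.

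This absence of limiting configurations is precisely what the paper verifies in the WOYM case (Lemma~\ref{unobstructed}), so it suffices for all of the paper's applications. It yields local constancy on the open set of parameters admitting such a locally uniform bound, which may be smaller than $\mathscr{U}_{cpt}$.
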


The proof of this theorem is very familiar from Seiberg-Witten theory, so we provide a brief outline and references for most of the standard arguments: we will first show that the moduli space is generically unobstructed and contains no reducible solutions; then, the theorem will follow when we show that moduli spaces corresponding to generic 1-parameter families of parameters provide cobordisms of the moduli spaces corresponding to the endpoints of the families.

\subsubsection{Generic Unobstructedness}
We will prove a slight strengthening of the statement that generic parameters result in unobstructed moduli spaces:

\begin{lemma}\label{generic}
    For each fixed Riemannian metric $g\in \operatorname{Met}(X),$ there is a residual subset $\mathscr{R}_g\subset \mathscr{A}(S)\times\Omega^+_X$ such that the irreducible part of the moduli space $\mathcal{M}(X, g, B, \eta)$ is unobstructed whenever $(B, \eta)\in \mathscr{R}_g.$
\end{lemma}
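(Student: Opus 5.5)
The plan is the standard Sard--Smale transversality argument for the parametrized moduli space, with the auxiliary connection $B$ and the perturbation $\eta$ both allowed to vary. (The statement writes $\mathscr{A}(S)$, but the varying connection is $B\in\mathscr{A}(E)$, and I will treat it that way.) Fix $g$ and work with Sobolev completions: $A'$ in $L^2_k$, $\Phi$ in $L^2_k$, $(B,\eta)$ in $C^\ell$ or $L^2_\ell$ with $\ell$ large. Let $\mathscr{C}^*$ be the irreducible configurations ($\Phi\neq 0$), and define the parametrized map
$$\mathfrak{F}:\mathscr{C}^*\times\mathscr{A}(E)\times\Omega^+_X\to L^2_{k-1}\big(\Omega^+_X(i\mathbb{R})\times\Gamma(W^-\otimes E)\big),$$
$$\mathfrak{F}(A',\Phi,B,\eta)=\big(\rho(F^+_{A'}+\sqrt{-1}\eta)-\mu(\Phi,\Phi),\; D^+_{A'\otimes B}\Phi\big).$$
This map is $\mathcal{G}$-equivariant. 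Its vertical derivative is the Fredholm operator $F_{A',\Phi}$ of the deformation complex, together with the gauge-fixing.

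The key step is to show that the full derivative $D\mathfrak{F}$ is surjective at every zero with $\Phi\neq 0$. The $\eta$-direction is a variation $\dot\eta$, and it contributes $\rho(\sqrt{-1}\dot\eta)$. Since $\rho$ identifies $\Omega^+$ with the traceless skew-Hermitian endomorphisms of $W^+$, the first component is hit surjectively by $\dot\eta$ alone. So it suffices to show the second component is surjective modulo the image of $F_{A',\Phi}$. That image is closed of finite codimension, so suppose $\psi\in L^2(W^-\otimes E)$ is $L^2$-orthogonal to the projection of the image onto the second factor, including the $B$-variations. The $B$-direction $\dot b\in\Omega^1(\mathfrak{su}(E))$ contributes $\rho(\dot b)\Phi$. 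Orthogonality to $\phi\mapsto D^+\phi$ (taking $a=0$) gives $(D^+_{A'\otimes B})^*\psi=0$, so by elliptic regularity $\psi$ is smooth. Orthogonality to the $B$-variations gives
$$\mathrm{Re}\langle \rho(\dot b)\Phi,\psi\rangle_{L^2}=0\quad\text{for all }\dot b\in\Omega^1(\mathfrak{su}(E)).$$

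The main obstacle is turning this pointwise condition into $\psi=0$. At a point $x$ where $\Phi(x)\neq 0$, the condition says that $\psi(x)$ is orthogonal to $\{\rho(v)\otimes T\,\Phi(x): v\in T^*_x,\ T\in\mathfrak{su}(E_x)\}$. The real span of this set should be all of $W^-_x\otimes E_x$ whenever $\Phi(x)\neq 0$. To see this, use $\mathfrak{su}(E)\otimes\mathbb{C}=\mathfrak{sl}(E)$ and $\rho(T^*X\otimes\mathbb{C})=\Hom(W^+,W^-)$. One then checks that $\Hom(W^+,W^-)\otimes\mathfrak{sl}(E)$ applied to a nonzero $\Phi\in W^+\otimes E$ spans $W^-\otimes E$; this works because $\mathfrak{sl}(E)$ acts irreducibly on $E$ and $\Hom(W^+,W^-)$ acts transitively on nonzero vectors. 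Care is needed with the fact that we only have real spans of $\rho(v)T$ with $v$ real and $T$ skew. I expect this to be handled by a Clifford identity showing $\sqrt{-1}\rho(v)\otimes T$ is also attained, for example using $\rho(v)\otimes(iT)$ with $iT$ Hermitian and combining with the $\Phi$-dependence. Should the $\mathfrak{su}(E)$ span fail in the degenerate case $N=1$, the $\eta$- and $a$-variations supply the needed complement.

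It follows that $\psi$ vanishes on the open set $\{\Phi\neq 0\}$. That set is nonempty, and $\psi$ satisfies the Dirac equation $(D^+_{A'\otimes B})^*\psi=0$, so unique continuation gives $\psi\equiv 0$. Likewise $\Phi$, being harmonic and nonzero, vanishes on no open set. Hence the parametrized moduli space $\mathfrak{F}^{-1}(0)/\mathcal{G}$ over irreducibles is a smooth Banach manifold, using that $\mathcal{G}$ acts freely there. Projection to the parameter space $\mathscr{A}(E)\times\Omega^+_X$ is Fredholm with index equal to the virtual dimension. By Sard--Smale its regular values are residual, and at a regular value $(B,\eta)$ the cokernel $H^2_{A',\Phi}$ vanishes at every irreducible solution. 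Finally, the usual Taubes intersection argument over $\ell\to\infty$ passes from $C^\ell$ parameters to smooth ones while keeping the set residual. This gives $\mathscr{R}_g$.
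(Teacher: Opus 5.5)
\textbf{Gap in the pointwise span claim.} Your route is the paper's route. It runs Sard--Smale for fixed $g$ over $(B,\eta)$: $\eta$ absorbs the curvature component, $B$-variations plus unique continuation handle the Dirac component, and Taubes' trick passes to smooth parameters. The paper outsources the pointwise step to \cite[Lemma 2.20]{Doan}.

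Your own version of that step is false as stated. Model $W^\pm_x$ on $\mathbb{H}$, with complex structure given by right multiplication by $i$ and $\rho(v)$ given by left multiplication by quaternions. Then $W^\pm_x\otimes E_x\cong\mathbb{H}^N$, and $T\in\mathfrak{u}(E_x)$ acts by right multiplication by a complex matrix. Take $N=2$ and $\Phi(x)=(1,j)$. This is a unitary isomorphism, so $\Phi(x)\neq 0$ but $\mu(\Phi(x),\Phi(x))=0$. The basis $\mathrm{diag}(i,-i)$, $\begin{pmatrix}0&1\\-1&0\end{pmatrix}$, $\begin{pmatrix}0&i\\ i&0\end{pmatrix}$ of $\mathfrak{su}(2)$ sends $\Phi(x)$ to $i\Phi(x)$, $-j\Phi(x)$, $-k\Phi(x)$. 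Hence the real span of all $(\rho(v)\otimes T)\Phi(x)$ is $\mathbb{H}\cdot\Phi(x)$, which is $4$-dimensional inside the $8$-dimensional fiber.

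Consequently no Clifford identity can produce $\sqrt{-1}\rho(v)\otimes T$. For an irreducible solution with $\mu(\Phi,\Phi)\equiv 0$, equivalently $F^+_{A'}+\sqrt{-1}\eta\equiv 0$, your argument shows nothing. Your claim does hold for $N\geq 3$, and for $N=2$ off the locus $\mu(\Phi(x),\Phi(x))=0$. But the failure is not confined to $N=1$.

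\textbf{Repair.} The fix is already contained in your setup. Since $\dot\eta$ alone surjects onto the first factor, surjectivity of $D\mathfrak{F}$ is equivalent to surjectivity of $(a,\phi,\dot b)\mapsto\rho(a)\Phi+D^+_{A'\otimes B}\phi+\rho(\dot b)\Phi$. So the $\psi$ you must rule out is also orthogonal to $\rho(a)\Phi$ for all $a\in\Omega^1_X(i\mathbb{R})$, and this holds for every $N$, not just $N=1$.

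Pointwise, this enlarges $\mathfrak{su}(E_x)$ to $\mathfrak{u}(E_x)$, and then the span is the whole fiber whenever $\Phi(x)\neq 0$. In the model above the span is a left $\mathbb{H}$-submodule of $\mathbb{H}^N$. If it were proper, it would be annihilated by some functional $q\mapsto\sum_m q_m h_m$ with $0\neq h\in\mathbb{H}^N$. Testing against $T=i\,e_{ll}$ and $T=e_{lm}-e_{ml}$ (matrix units) gives $\Phi_l\, i\, h_l=0$ and $\Phi_l h_m=\Phi_m h_l$. Choosing $l$ with $\Phi_l\neq 0$ forces $h=0$.

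With this change $\psi$ vanishes on $\{\Phi\neq 0\}$. The rest of your argument then goes through as written: unique continuation, freeness of the gauge action on irreducibles, Sard--Smale, and the passage to smooth parameters.
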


\begin{proof}
    Fix $g$. We first take parameters $(B, \eta)$ in the Sobolev space $L^2_k,$ for fixed $k>>0.$ Fix an irreducible multi-monopole $(A', \Phi).$ For such parameters, we consider the differentials of the Seiberg-Witten function and the gauge fixing condition. If these constitute a surjective map $$T_A\mathscr{A}(L)\times T_\Phi \Gamma(W^+\otimes E)\times T_B\mathscr{A}(E)\times T_\eta\Omega^+_X\to \Gamma(W^-\otimes E)\times i\Omega^+_X\times \widetilde\Omega^0_X$$ (with all sections taken in $L^2_k$), then the Sard-Smale theorem assures us that the irreducible part of the moduli space $\mathcal{M}(X, g, B, \eta)$ is unobstructed for a residual subset of $L^2_k$ parameters $(B,\eta)$. (Here, $\widetilde\Omega^0_X$ denotes those functions integrating to 0.) Passing from $L^2_k$ to $C^\infty$ is a standard argument based on elliptic bootstrapping and an idea of Taubes; see \cite[Proposition 2.19]{Doan}.

    The map in question is: $$(A', \Phi', b, \eta') \mapsto (\rho(b)\Phi+\rho(a)\Phi + D_{AB}\Phi', d^+a -2\rho^{-1}(\text{Im}\mu(\Phi, \Phi'))+\sqrt{-1}\eta', d^*a)$$ For fixed $b, \Phi',$ by varying $a, \eta'$ the projection onto the last two coordinates is surjective. To show surjectivity of the whole map, it suffices to show that for fixed $a, \eta',$ if we vary $b$ and $\Phi'$ we achieve all values of the projection to the first coordinate. This follows from \cite[Lemma 2.20]{Doan} together with the unique continuation theorem for solutions to the twisted Dirac equation, as in \cite[Proposition 2.19]{Doan} or \cite[Transversality Theorem 1]{Moore}.
\end{proof}

\subsubsection{Generic Irreducibility}
Note also the following result, which is standard in Seiberg-Witten theory (such as \cite[Proposition 2.21]{Doan},   \cite[Proposition of Section 1.9]{Moore}).
\begin{lemma}
    If $b^+(X)\geq 1$, for a generic parameter the moduli space is regular, and if $b^+(X)\geq 2$, for a generic 1-parameter family of parameters the moduli space is irreducible.
\end{lemma}  

\subsubsection{Orientations}

Also, just as in ordinary Seiberg-Witten theory, by specifying an orientation of $H^1(X; \mathbb{R})\oplus H^+(X; \mathbb{R})$ we get a canonical orientation on the moduli space $\mathcal{M}(X, \mathbf{p})$, when it is irreducible and unobstructed, and this orientation is invariant in smooth 1-parameter families.

\begin{proof}[Proof of Theorem \ref{local-invariance}]
    Lemma \ref{generic} holds true for generic 1-parameter families of parameters $(g, B, \eta)$ by the same argument with the Sard-Smale theorem as in the above proof. 
    
    By the proof of \cite[Theorem 1.3]{Doan}, our desired local invariance statement follows immediately from generic unobstructedness, generic irreducibility, and our discussion of orientations.
\end{proof}

\subsection{Zariski smooth moduli spaces}

Many times in this paper, we will consider situations in which the moduli space $\mathcal{M}(X, \mathbf p)$ is a smooth manifold of dimension \emph{not} equal to its virtual dimension. In one such case, we may nonetheless directly compute the value $SW_E(X, \mathbf p)$ from the moduli space together with one piece of extra data, which is the \emph{obstruction bundle}.

So, say that $\mathcal{M}(X, \mathbf p)$ is \emph{Zariski smooth} if at each $[A', \Phi]\in \mathcal{M}(X, \mathbf p)$ there is a Kuranishi map which is identically zero. In this case, we cite the following facts from \cite[Section 2]{Doan}: Each connected component of the moduli space $\mathcal{M}(X, \mathbf p)$ is a smooth manifold and the tangent space at each $[A', \Phi]$ may be identified with $H^1_{A', \Phi}.$ Moreover, the spaces $H^2_{A', \Phi}$ fit together to form a vector bundle over each component of $\mathcal{M}(X, \mathbf p)$ which shall be called the \emph{obstruction bundle} and denoted as $\mathfrak{O}\to \mathcal{M}(X, \mathbf p).$ As in ordinary Seiberg-Witten theory, the bundle $\Lambda^{\text{top}} T\mathcal{M}\otimes (\Lambda^{\text{top}} \mathfrak{O})^*$ is the determinant bundle of the index bundle of the linearized Seiberg-Witten equations with gauge fixing over the moduli space $\mathcal{M}.$ Hence, it has a canonical orientation, once we specify an orientation for $H^1(X;\mathbb{R})\oplus H^+(X; \mathbb{R})$ (see for instance \cite[Transversality Theorem 2]{Moore}.) Hence, if $\mathcal{M}$ is orientable, so is $\mathfrak{O},$ and moreover there is a canonical relative orientation of $\mathcal{M}$ and $\mathfrak{O}$.

\begin{remark}
    In fact, $\mathfrak{O}\to \mathcal{M}(X, \mathbf p)$ actually represents a vector bundle of a potentially different rank over each component of $\mathcal{M}(X, \mathbf p).$ 
\end{remark}

The following theorem from Friedman-Morgan allows us to compute the value of the Seiberg-Witten function from a Zariski smooth moduli space, and will be our main computational tool.

\begin{theorem}[{\cite[Theorem 3.1]{FM}}]\label{zariski-smooth}
    Suppose $\mathbf{p} \in \mathscr{U}_{cpt}$ and that $\mathcal{M}(X, \mathbf p)$ is irreducible, orientable, and Zariski smooth. Then, $$SW_E(X, \mathbf p) = \int_{\mathcal{M}(X, \mathbf p)} e(\mathfrak{O})\smile\mu^{d/2},$$ where $d$ is the virtual dimension of $\mathcal{M}(X, \mathbf p).$
\end{theorem}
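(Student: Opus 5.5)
We would follow the standard Friedman--Morgan excess-intersection argument, adapted to our setting, and reduce to Theorem \ref{local-invariance}. Since $\mathbf p\in\mathscr{U}_{cpt}$, $\text{SW}_E(X,\mathbf p)=\text{SW}_E(X,\mathbf p')$ for any generic $\mathbf p'$ in a small neighborhood of $\mathbf p$, and this equals $\int_{\mathcal{M}(X,\mathbf p')}\mu^{d/2}$. The task is therefore to identify the perturbed moduli space with the zero locus of a generic section of $\mathfrak{O}$ over $\mathcal{M}(X,\mathbf p)$.

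\textbf{Step 1: global Kuranishi model.} Because $\mathcal{M}=\mathcal{M}(X,\mathbf p)$ is compact, irreducible and Zariski smooth, the spaces $H^1_{A',\Phi}$ and $H^2_{A',\Phi}$ have locally constant rank on each component. They form the tangent bundle and the obstruction bundle $\mathfrak{O}$. Using a slice for the free $\mathcal{G}$-action (free because everything is irreducible), we build a neighborhood $\mathcal{V}$ of $\mathcal{M}$ in $\mathscr{B}$, fibered over $\mathcal{M}$. On $\mathcal{V}$ the gauge-fixed Seiberg--Witten map $\mathfrak{F}_{\mathbf q}$ splits, for every parameter $\mathbf q$ near $\mathbf p$, by Lyapunov--Schmidt reduction along the complement of $\mathfrak{O}$. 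This yields a finite-dimensional model: for $\mathbf q$ near $\mathbf p$, the set $\mathcal{M}(X,\mathbf q)\cap\mathcal{V}$ is the zero set of a section $s_{\mathbf q}$ of (an extension of) $\mathfrak{O}$ over a manifold $\widetilde{\mathcal{M}}_{\mathbf q}\cong\mathcal{M}$ depending smoothly on $\mathbf q$. For $\mathbf q=\mathbf p$, Zariski smoothness says precisely that $s_{\mathbf p}\equiv 0$ and $\widetilde{\mathcal M}_{\mathbf p}=\mathcal M$.

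\textbf{Step 2: nothing escapes.} We must show that for $\mathbf q$ sufficiently close to $\mathbf p$, all of $\mathcal{M}(X,\mathbf q)$ lies inside $\mathcal{V}$. This is where $\mathbf p\in\mathscr{U}_{cpt}$ is essential, and it is the main obstacle. Suppose instead there were a sequence $\mathbf q_i\to\mathbf p$ with solutions outside $\mathcal V$. We would need a uniform bound allowing us to extract a convergent subsequence, whose limit would be a solution for $\mathbf p$ outside $\mathcal V$, a contradiction. Openness of $\mathscr P_{cpt}$ near $\mathbf p$ alone does not give uniform bounds. I would try to get them by applying the properness of the parametrized moduli space over a compact path $\{\mathbf q_i\}\cup\{\mathbf p\}$. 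To do this, argue, as in the proof of Theorem \ref{local-invariance} via \cite{Doan}, that the definition of $\mathscr U_{cpt}$ is meant to ensure the union of moduli spaces over a small compact neighborhood is compact. If needed, appeal to Taubes' compactness theorem to rule out divergence of $\|\Phi\|_{L^2}$.

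\textbf{Step 3: Euler class computation.} Choose $\mathbf p'$ generic close to $\mathbf p$. By Steps 1--2, $\mathcal M(X,\mathbf p')=s_{\mathbf p'}^{-1}(0)$, and it is transversely cut out, since the linearization of $s_{\mathbf p'}$ at a zero is the cokernel part of the full linearization, which is onto by genericity. Hence $s_{\mathbf p'}$ is a transverse section of $\mathfrak O$. The class $\mu$ on $\mathcal M(X,\mathbf p')$ is the restriction of $\mu$ on $\mathcal V$, which retracts to $\mathcal M$. The canonical relative orientation of $\mathcal M$ and $\mathfrak O$, coming from the determinant line of the index bundle, agrees with the orientation on $s^{-1}(0)$ induced by the index bundle. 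So $[\mathcal M(X,\mathbf p')]$ is Poincaré dual to $e(\mathfrak O)$ in $\mathcal M$, and
$$\text{SW}_E(X,\mathbf p)=\int_{\mathcal M(X,\mathbf p')}\mu^{d/2}=\int_{\mathcal M}e(\mathfrak O)\smile\mu^{d/2},$$
summing over components, where the rank of $\mathfrak O$ may vary.
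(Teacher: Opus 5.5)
Your Steps 1 and 3 are the Friedman--Morgan excess-intersection argument, which is all the paper relies on here: it gives no proof of its own beyond citing \cite[Theorem 3.1]{FM}. The genuine gap is Step 2, and neither of your suggested repairs closes it.

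As defined in the paper, $\mathbf p\in\mathscr U_{cpt}$ only says that each individual moduli space $\mathcal M(X,\mathbf q)$, for $\mathbf q$ near $\mathbf p$, is compact. It says nothing about a sequence of solutions at parameters $\mathbf q_i\to\mathbf p$ with $\|\Phi_i\|_{L^2}\to\infty$. Such solutions are precisely the components of $\mathcal M(X,\mathbf p')$ lying outside $\mathcal V$, and they would spoil the identity $\int_{\mathcal M(X,\mathbf p')}\mu^{d/2}=\int_{\mathcal M}e(\mathfrak O)\smile\mu^{d/2}$.

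Your three suggested repairs each fail:
\begin{itemize}
\item Invoking properness of the parametrized moduli space over $\{\mathbf q_i\}\cup\{\mathbf p\}$ is circular, since that properness is exactly the uniform bound you want.
\item Appealing to what the definition of $\mathscr U_{cpt}$ is ``meant'' to ensure is not an argument.
\item Theorem \ref{taubes} does not rule out divergence. Even in its strong form with converging parameters, it only says that a divergent sequence subconverges to a limiting configuration for the limiting pair $(g,B)$. Nothing in the hypotheses of the statement excludes such limiting configurations.
\end{itemize}
In Friedman--Morgan's setting this step is free, because the Weitzenb\"ock formula gives $C^0$ bounds on $\Phi$ uniform over compact sets of parameters. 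That is exactly what is lost with multiple spinors.

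To close the gap you need one of two additional inputs. Either interpret $\mathbf p\in\mathscr U_{cpt}$ as compactness of the parametrized moduli space over a neighborhood of $\mathbf p$, which Theorem \ref{local-invariance} tacitly uses anyway. Or assume there are no limiting configurations for $(g,B)$.

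The second holds in every case where the paper applies the formula. For a good parameter, $B$ is WOYM, so Lemma \ref{unobstructed} excludes limiting configurations. Then Theorem \ref{taubes}, argued as in the proof of Theorem \ref{strong_compact}, applies to any sequence of solutions at parameters $\mathbf q_i\to\mathbf p$. Every subsequence has $\liminf\|\Phi_i\|_{L^2}<\infty$, so a further subsequence converges modulo gauge to a solution at $\mathbf p$. Since $\mathcal M(X,\mathbf p)$ lies in the interior of $\mathcal V$, this gives $\mathcal M(X,\mathbf q)\subset\mathcal V$ for all $\mathbf q$ sufficiently close to $\mathbf p$. With that, your Steps 1 and 3 go through as written.
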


\section{The equations on a K\"ahler surface}
In this section, we will describe the moduli space $\mathcal{M}(X, g, B, \eta)$ when $X$ is K\"ahler and $B$ has been chosen appropriately. We will find that, much like the ordinary Seiberg-Witten moduli space on a K\"ahler surface, there exists an algebro-geometric interpretation. Moreover, there exists an algebro-geometric interpretation of limiting configurations appearing as limits of rescaled sequences of solutions, and which may be thought of as $\mathbb{Z}/2$-harmonic spinors (see \cite{Taubes16}.)

Let $X$ be a compact K\"{a}hler surface with K\"{a}hler form $\omega\in\Omega^+.$ Then, $X$ has a canonical spin$^c$ structure $\mathbb{S}^\pm$ with $\mathbb{S}^+ = \Omega^0\oplus\Omega^{0,2}$ and $\mathbb{S}^- = \Omega^{0,1}.$ We now will choose $L$ a hermitian line bundle such that $W^\pm = \mathbb{S}^\pm\otimes L.$ Define the perturbation form as $\eta = 2r \omega$ for $r$ very large. Let $\nabla_0$ be the unitary connection on the canonical bundle of $X$ induced from the Levi-Civita connection, and $\nabla_0'$ be the induced connection on $K^{-1}.$

The correspondence between unitary connections on $L$ and those on $S$, taking a connection $A$ on $L$ to $A':=A^2\otimes \nabla_0'$ on the determinant bundle $S,$ is bijective. Hence, we may write the equations in terms of the connection $A$ on $L,$ which in fact will simplify the notation. So, following \cite{Bryan-Wentworth}, but making the appropriate generalizations for the fact that the connection $B$ is not trivial, the twisted spinor $\Phi = (\alpha, \beta)$ in accordance with the splitting $W^+ = L\oplus (K_X^{-1}\otimes L),$ and so Equations \ref{msw} become the following, for $A\in\mathcal{A}(L), \alpha \in \Omega^0(L\otimes E), \beta\in\Omega^{0,2}(L\otimes E)$:

\begin{equation}\label{holomsw}\begin{cases}
    \delbar_{AB}\alpha &= -\delbar_{AB}^*\beta \\
    F_A^{2,0} &= \tr(\alpha\otimes\beta^*) \\
    -\sqrt{-1}\Lambda F_A &= \frac14\left(|\alpha|^2-|\beta|^2\right) - \frac12\sqrt{-1}\Lambda F_{\nabla_0} - r.\\
\end{cases}\end{equation}

\begin{lemma}\label{holomorphic}
    If $F_B^{2,0} = 0,$ then any solution of the above equations must have: $$F_A^{2,0} = \delbar_{AB}\alpha = \delbar^*_{AB}\beta = 0.$$
\end{lemma}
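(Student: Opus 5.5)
This is the Witten-type integration-by-parts argument used for Kähler Seiberg--Witten solutions, adapted to the twisted setting as in Bryan--Wentworth. The approach is to apply $\delbar_{AB}$ to the first equation of (\ref{holomsw}), pair the result with $\beta$, and integrate over $X$. This produces an identity whose two sides have opposite signs. The hypothesis $F_B^{2,0}=0$ is used exactly to control the curvature term that appears.

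\emph{Step 1.} Apply $\delbar_{AB}$ to both sides of $\delbar_{AB}\alpha = -\delbar_{AB}^*\beta$. On sections of $L\otimes E$,
$$\delbar_{AB}\circ\delbar_{AB} = F^{0,2}_{A\otimes B} = (F_A^{0,2} + F_B^{0,2})\otimes 1 .$$
Since $B$ is unitary, $F_B^{0,2} = -(F_B^{2,0})^* = 0$. Therefore
$$F_A^{0,2}\,\alpha = -\delbar_{AB}\delbar_{AB}^*\beta .$$
Note that $\delbar_{AB}$ applied to a section of $\Omega^{0,1}(L\otimes E)$ lands in $\Omega^{0,2}(L\otimes E)$, so this equation holds in $\Omega^{0,2}$.

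\emph{Step 2.} Take the $L^2$ inner product of this equation with $\beta$. The right side gives $-\|\delbar_{AB}^*\beta\|_{L^2}^2$.

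For the left side, substitute the second equation of (\ref{holomsw}). Unitarity of $A$ gives $F_A^{0,2} = -\overline{F_A^{2,0}}$, and $F_A^{2,0}=\tr(\alpha\otimes\beta^*)$. Hence, pointwise, $\langle F_A^{0,2}\alpha,\beta\rangle$ is a negative constant multiple of $|\tr_E(\alpha\otimes\beta^*)|^2$, which equals $|F_A^{2,0}|^2$ up to a positive constant. The constant depends on the normalizations of the norms on $\Omega^{2,0}$ and $\Omega^{0,2}$. Thus
$$-c\,\|F_A^{2,0}\|_{L^2}^2 = -\|\delbar_{AB}^*\beta\|_{L^2}^2 .$$

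\emph{Step 3.} The expected obstacle is exactly this sign and constant bookkeeping. If the signs came out so that both sides were nonpositive with equal magnitude, the identity would give no information. So I must verify carefully that $\langle F_A^{0,2}\alpha,\beta\rangle$ really has sign opposite to that of $\langle -\delbar\delbar^*\beta,\beta\rangle$.

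In the usual Kähler Seiberg--Witten computation the correct identity is
$$\|\delbar^*\beta\|^2 + c\|F_A^{2,0}\|^2 = 0 .$$
I expect the same here, since the cross term from expanding $\|\delbar\alpha+\delbar^*\beta\|^2=0$ is
$$2\,\mathrm{Re}\langle\delbar\alpha,\delbar^*\beta\rangle = 2\,\mathrm{Re}\langle F^{0,2}\alpha,\beta\rangle = 2\,\mathrm{Re}\langle F^{0,2}_A,\overline{\tr(\alpha\otimes\beta^*)}\rangle ,$$
which is a positive multiple of $\|F_A^{2,0}\|^2$. I would therefore present the argument in this cleaner form:
$$0=\|\delbar_{AB}\alpha+\delbar_{AB}^*\beta\|^2=\|\delbar_{AB}\alpha\|^2+\|\delbar_{AB}^*\beta\|^2+2c\|F_A^{2,0}\|^2 .$$
All three terms are nonnegative, so each vanishes. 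This gives $F_A^{2,0}=0$, $\delbar_{AB}\alpha=0$ and $\delbar_{AB}^*\beta=0$, as claimed.
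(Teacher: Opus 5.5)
Your strategy matches the paper's: apply $\delbar_{AB}$ to the Dirac equation, use $F_B^{0,2}=0$, and pair with $\beta$. Your expansion of $\|\delbar_{AB}\alpha+\delbar^*_{AB}\beta\|^2$ is an equivalent packaging that also gives $\delbar_{AB}\alpha=0$ directly.

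\textbf{The gap.} The only nontrivial point is the sign of $\int_X\langle F_A^{0,2}\alpha,\beta\rangle$, and you never establish it. Step 2 asserts it is $-c\|F_A^{2,0}\|^2$. Step 3 asserts that the same quantity is $+c\|F_A^{2,0}\|^2$; it is the same quantity because $\langle\delbar_{AB}\alpha,\delbar^*_{AB}\beta\rangle_{L^2}=\langle F_A^{0,2}\alpha,\beta\rangle_{L^2}$. The final identity is then adopted only because it is ``expected.'' These claims contradict each other, and nothing in the proposal decides between them.

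In fact your Step 2 is exactly what a literal substitution of the second line of (\ref{holomsw}) gives. Write $\beta=\bar\theta\otimes b$ locally with $|\bar\theta|=1$, and read $\beta^*$ as the metric dual. Then $\tr(\alpha\otimes\beta^*)=\langle\alpha,b\rangle\theta$, so $F_A^{0,2}=-\overline{F_A^{2,0}}=-\langle b,\alpha\rangle\bar\theta$ and $\langle F_A^{0,2}\alpha,\beta\rangle=-|\langle\alpha,b\rangle|^2$. Plugging the same relation into your Step 3 expression gives $\langle F_A^{0,2},\overline{\tr(\alpha\otimes\beta^*)}\rangle=-|F_A^{0,2}|^2$, not a positive multiple. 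With this sign the identity reads $\|\tr(\alpha\otimes\beta^*)\|^2_{L^2}=\|\delbar^*_{AB}\beta\|^2_{L^2}$ and proves nothing. So analogy with the one-spinor case cannot replace a computation here.

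\textbf{The missing step.} Derive the sign from the curvature equation in (\ref{msw}) itself. Use the splitting $W^+\otimes E=(\Lambda^0\oplus\Lambda^{0,2})\otimes L\otimes E$ and compare the $\Lambda^0\to\Lambda^{0,2}$ blocks of both sides.
\begin{itemize}
\item On the side of $\mu(\Phi,\Phi)$, this block is multiplication by the $(0,2)$-form $\tr_E(\beta\otimes\alpha^*)$, locally $\langle b,\alpha\rangle\bar\theta$. The traceless projection does not touch off-diagonal blocks.
\item On the side of $\rho(F^+_{A'}+\sqrt{-1}\eta)$, this block is $\rho(F_{A'}^{0,2})=2F_{A'}^{0,2}\wedge=4F_A^{0,2}\wedge$. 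Here $\eta$ is of type $(1,1)$ and $F^{0,2}_{\nabla_0'}=0$. The positive factor is forced by the Weitzenb\"ock formula $D^2=\nabla^*\nabla+\frac s4+\frac12\rho(F_{A'})+\rho(F_B)$: its off-diagonal block must equal that of $2(\delbar_{AB}+\delbar^*_{AB})^2$, namely $2F_A^{0,2}\wedge$.
\end{itemize}
Hence $F_A^{0,2}=\frac14\tr_E(\beta\otimes\alpha^*)$, so $\langle F_A^{0,2}\alpha,\beta\rangle=\frac14|\langle\alpha,b\rangle|^2\ge0$. Pairing $F_A^{0,2}\alpha=-\delbar_{AB}\delbar^*_{AB}\beta$ with $\beta$ then gives $\frac14\|\tr_E(\beta\otimes\alpha^*)\|^2_{L^2}+\|\delbar^*_{AB}\beta\|^2_{L^2}=0$.

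This favorable sign is opposite to the literal metric-dual reading of (\ref{holomsw}). It is the sign the paper implicitly uses when it writes $\langle F_A^{2,0}\alpha,\beta\rangle=|\tr(\alpha\otimes\beta^*)|^2$. Once you establish it this way, your three-term identity finishes the proof.
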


\begin{proof}
    Applying the operator $\delbar_{AB}$ to the first equation, we get: $$(F_A^{2,0}+F_B^{2,0})\alpha = \delbar_{AB}\delbar_{AB}\alpha = -\delbar_{AB}\delbar_{AB}^*\beta.$$ Pairing with $\beta$ and integrating both sides, we see by $F_A^{2,0} = 0$ and integration by parts that: $$\int_X\langle F_A^{2,0}\alpha, \beta\rangle = -\int_X |\delbar_{AB}^*\beta|^2.$$ By the second equation, we have: $$ \langle F_A^{2,0}\alpha, \beta\rangle = \langle \tr(\alpha\otimes\beta^*)\alpha, \beta\rangle = |\tr(\alpha\otimes\beta^*)|^2,$$ so now we may immediately conclude that $\delbar_{AB}^*\beta = \tr(\alpha\otimes\beta^*) = 0,$ and the remaining claims in the lemma statement follow from the first two equations.
\end{proof}

\subsection{Holomorphic Triples}

For the remainder of this section, we will assume that $F_B^{2,0} = 0$ and use $\mathscr{E}$ or occasionally $E_B$ to denote the holomorphic bundle determined by the connection $B$ on $E.$ Under this assumption, we will find a holomorphic interpretation of the moduli space. To do this, we must first define the holomorphic configuration spaces.

The \emph{holomorphic configuration space} $\mathscr{C}_\mathbb C$ is the space $\mathscr{C}_\mathbb C = \mathscr{A}(L)\times\Omega^0(E\otimes L)\times \Omega^{0,2}(E\otimes L).$ We define the \emph{complex gauge group} to be $$\mathcal{G}_\mathbb C := \text{Map}(X, \mathbb{C}^*),$$ and this acts on the holomorphic configuration space by: $$u\cdot(A, \alpha, \beta) = (A+\overline u^{-1}\partial\overline u-u^{-1}\delbar u, u\alpha, \overline{u}^{-1}\beta).$$ Define the space $\mathscr{B}_\mathbb C$ to be the quotient $\mathscr{C}_\mathbb C/\mathcal{G}_\mathbb C.$

\begin{definition}
    For a connection $B\in\mathscr{A}(E)$ with $F_B^{2,0} = 0$, we define a $E_B$-\emph{holomorphic triple} to be the data of $(A, \alpha, \beta)$ where: \begin{itemize}   \item $A\in\mathscr{A}(L)$ has $F_A^{2,0}=0$. So, it defines a  holomorphic bundle $\mathscr{L}\in \text{Pic}(X)$ whose underlying smooth bundle is $L,$ and 
    \item $\alpha\in H^0_X(\mathscr{E}\otimes \mathscr{L}),$ and \item $\beta^*\in H^0_X(\mathscr{E}^*\otimes \mathscr{L}^*\otimes K_X),$ 
    \end{itemize} satisfying $\tr(\alpha\otimes \beta^*) = 0$ and $\alpha$ not identically zero.
\end{definition}

Define the \emph{moduli space of holomorphic triples} $\mathcal{M}_\mathbb{C}(X, \mathscr{E}, L)$ to be the subset of $\mathscr{B}_\mathbb C$ consisting of those $[A, \alpha, \beta]\in \mathscr{B}_\mathbb C$ such that $(A, \alpha, \beta)$ is an $\mathscr{E}$-holomorphic triple.

\begin{remark}\label{picard}
    By forgetting the sections, the moduli space $\mathcal{M}_\mathbb{C}(X, \mathscr{E}, L)$ maps to the Picard group of $X$. The fiber over each $\mathscr{L}\in\text{Pic}(X)$ is the quotient of the space $\left\{\tr(\alpha\otimes \beta^*) = 0 | (\alpha, \beta^*)\in H^0_X(\mathscr{E}\otimes\mathscr{L})\times H^0_X(\mathscr{E}^*\otimes\mathscr{L}^*\otimes K_X)\right\}$ by the action of $\mathbb{C}^*$ as $\lambda\cdot (\alpha, \beta^*) = (\lambda\alpha, \lambda^{-1}\beta^*).$
\end{remark}

The following theorem is the analog of \cite[Theorem 3.2]{Bryan-Wentworth} in our more general setting, and the proof follows similar lines:

\begin{theorem}\label{complex-moduli}
    If $F_B^{2,0}= 0,$ then the moduli space $\mathcal{M}(X, g, B, 2r\omega)$ is homeomorphic to the space of holomorphic triples $\mathcal{M}_\mathbb{C}(X, E_B, L)$.
\end{theorem}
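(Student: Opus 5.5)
The proof follows the Kobayashi--Hitchin style argument of \cite[Theorem 3.2]{Bryan-Wentworth}: a map from solutions to holomorphic triples, and an inverse obtained by solving a Kazdan--Warner equation.

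\emph{Forward map.} By Lemma \ref{holomorphic}, any solution $(A,\alpha,\beta)$ of Equations (\ref{holomsw}) with $F_B^{2,0}=0$ has $F_A^{2,0}=0$, $\delbar_{AB}\alpha=0$, $\delbar^*_{AB}\beta=0$ and $\tr(\alpha\otimes\beta^*)=0$. The condition $\delbar^*_{AB}\beta=0$ for $\beta\in\Omega^{0,2}(L\otimes E)$ is equivalent, via the Hodge star/Serre duality isomorphism $\Omega^{0,2}(L\otimes E)\cong\overline{\Omega^{2,0}(L^*\otimes E^*)}$, to $\beta^*$ being a holomorphic section of $\mathscr{E}^*\otimes\mathscr{L}^*\otimes K_X$. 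So each solution is a holomorphic triple, except possibly for the condition $\alpha\not\equiv0$. For this, integrate the third equation. If $\alpha=0$, then $\int(-\sqrt{-1}\Lambda F_A)$ is a topological quantity plus $-\frac14\int|\beta|^2-r\vol(X)$. This is negative for $r$ large, but the same holds for the reducible case, so I would instead argue directly. Suppose $\alpha=0$ and $\beta\neq0$. Then $\beta^*$ is a nonzero holomorphic section of $\mathscr{E}^*\otimes\mathscr{L}^*\otimes K_X$, so the degree of $\mathscr L^*\otimes K_X$ (paired with $\omega$) is bounded below; more concretely, stability-type considerations must be replaced by the sign of the degree. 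The integral of the third equation gives $2\pi\deg L\cdot(\text{const})=-\frac14\|\beta\|^2+c-r\vol$, which for $r$ large forces $\deg L\ll0$. That makes $\deg(\mathscr L^*\otimes K_X)\gg0$, and no contradiction arises this way. Instead I would use the Weitzenböck/Bochner argument on $\beta$: apply $\delbar_{AB}\delbar^*_{AB}\beta=0$ and combine it with the curvature equation, so that $|\beta|^2$ satisfies $\Delta|\beta|^2+\frac14|\beta|^4+(r+C)|\beta|^2\le0$ (the sign from $\Lambda F_A$ entering through the Weitzenböck formula on $\Lambda^{0,2}$). The maximum principle then gives $\beta=0$. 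Hence $\alpha=\beta=0$, and the third equation becomes $-\sqrt{-1}\Lambda F_A=$ a function with integral forced topologically. This is incompatible with the fixed topology of $L$ once $r$ exceeds a constant. This is the first step that needs care, and I expect the maximum-principle sign bookkeeping to be where the argument could go wrong. Finally, the map is $\mathcal G$-equivariant with $\mathcal G\subset\mathcal G_\mathbb C$, so it descends to $\mathcal M(X,g,B,2r\omega)\to\mathcal M_\mathbb C(X,E_B,L)$.

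\emph{Inverse map.} Given a triple $(A,\alpha,\beta)$, I look for a real gauge transformation $u=e^{f}$, with $f$ real, such that $u\cdot(A,\alpha,\beta)$ satisfies the third equation. Under this transformation $-\sqrt{-1}\Lambda F_A$ changes by $\Delta f$ (up to normalization), while $|\alpha|^2\mapsto e^{2f}|\alpha|^2$ and $|\beta|^2\mapsto e^{-2f}|\beta|^2$. This yields the Kazdan--Warner type equation
$$\Delta f+\tfrac14\bigl(e^{2f}|\alpha|^2-e^{-2f}|\beta|^2\bigr)=w,\qquad w:=\sqrt{-1}\Lambda F_A+\tfrac12\sqrt{-1}\Lambda F_{\nabla_0}+r .$$
The nonlinearity is strictly monotone increasing in $f$ and $\alpha\not\equiv0$, so existence and uniqueness follow as in \cite{Bryan-Wentworth} (originally Kazdan--Warner, or the variational argument of Bradlow). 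One minimizes the convex functional $\int\frac12|df|^2+\frac18(e^{2f}|\alpha|^2+e^{-2f}|\beta|^2)-wf$. Its coercivity uses $\alpha\not\equiv0$ to control $f\to+\infty$, and $\int w>0$ for $r$ large together with the $\beta$ term to control $f\to-\infty$. Holomorphicity and the condition $\tr(\alpha\otimes\beta^*)=0$ are preserved by $\mathcal G_\mathbb C$. Lemma \ref{holomorphic} shows the first two equations of (\ref{holomsw}) then hold. Uniqueness of $f$ shows each $\mathcal G_\mathbb C$-orbit of triples meets the solution set in exactly one $\mathcal G$-orbit, because $\mathcal G_\mathbb C=\mathcal G\cdot\exp(\Omega^0(\mathbb R))$.

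\emph{Homeomorphism.} Continuity of the forward map is clear. Continuity of the inverse follows from continuous dependence of the minimizer $f$ on $(\alpha,\beta,A)$, via the implicit function theorem: the linearization $\Delta+\frac12(e^{2f}|\alpha|^2+e^{-2f}|\beta|^2)$ is invertible. Together with elliptic estimates on the quotient topologies, this completes the argument.
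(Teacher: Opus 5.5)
Your proposal is correct and follows essentially the paper's route. The forward map comes from Lemma \ref{holomorphic} together with $\mathcal{G}\subset\mathcal{G}_\mathbb{C}$, and the inverse comes from solving the Bryan--Wentworth Kazdan--Warner equation; your convex-functional existence argument and implicit-function-theorem continuity replace the paper's constant gauge transformation (used to get $\int(P-Q)>0$) and its appeal to Doan's continuity argument. The one place you make life harder than necessary is $\alpha\not\equiv 0$, which the paper leaves implicit: $L$ is fixed, so integrating the third equation as in Lemma \ref{property-c} sets a positive multiple of $\|\alpha\|_{L^2}^2-\|\beta\|_{L^2}^2$ equal to an $L$-dependent topological constant plus a positive multiple of $r$. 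For $r$ large this alone rules out both $\alpha\equiv 0$ and reducibles, so your remark that no contradiction arises this way is mistaken, and the maximum-principle detour, though valid, is not needed.
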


\begin{proof}
    First, we define a map $f:\mathcal{M}(X, g, B, r\omega)\to \mathcal{M}_\mathbb{C}(X, E_B, L),$ by $$f(A, (\alpha, \beta)) := [A, \alpha, \beta].$$ By the first two equations of Equation (\ref{holomsw}) and Lemma \ref{holomorphic}, $[A, \alpha, \beta]$ is a holomorphic triple. The map $f$ is well-defined since for any gauge transformation $u\in \mathcal{G},$ evidently we have $f(u\cdot(A', \Phi)) = u\cdot f(A', \Phi)$ and $u\in \mathcal{G}\subseteq \mathcal{G}_\mathbb{C}$. Clearly $f$ is continuous.

    We will construct an inverse to $f$ as follows. A holomorphic triple $(A, \alpha, \beta)$ gives us a solution to the multi-spinor Seiberg-Witten equations if and only if it satisfies the third equation of (\ref{holomsw}): $$-\sqrt{-1}\Lambda F_A =\frac14\left(|\alpha|^2-|\beta|^2\right) - \frac12\sqrt{-1}\Lambda F_{A_0} - r.$$ So, given a holomorphic triple $(A, \alpha, \beta)$ we wish to find a gauge transformation $u = e^h \in \mathcal{G}_\mathbb C$ such that $u(A, \alpha, \beta)$ solves the equation: $$-\sqrt{-1}\Lambda F_{uA} =\frac14\left(|u\alpha|^2-|\overline{u}^{-1}\beta|^2\right) - \frac12\sqrt{-1}\Lambda F_{A_0} - r.$$ 

    Using the K\"ahler identities, we compute that $\sqrt{-1}\Lambda F_{uA} = \sqrt{-1}\Lambda F_A + \Delta h,$ where $\Delta$ is the Hodge Laplacian. Hence, we must solve for $h$ satisfying: \begin{equation}\label{kw} \Delta h +e^{2h}(\frac14|\alpha|^2) - e^{-2h}(\frac14|\beta|^2) =\frac12 \sqrt{-1}\Lambda F_{A_0} - \sqrt{-1}\Lambda F_A + r.\end{equation} Setting $$P = \frac14|\alpha|^2, Q = \frac14|\beta|^2, w = \frac12 \sqrt{-1}\Lambda F_{A_0} - \sqrt{-1}\Lambda F_A + r,$$ we may re-write this as a variant of a Kazdan-Warner equation: $$\Delta h + Pe^{2h}-Qe^{-2h} = w.$$ Bryan and Wentworth show the following:
    
    \begin{lemma}[{\cite[Lemma 3.4]{Bryan-Wentworth}}] On any compact Riemannian manifold, the equation: $$\Delta h + Pe^{2h}-Qe^{-2h} = w$$ admits a solution provided that $P\geq 0, Q\geq 0, \int (P-Q)>0,$ and $\int w >0.$ Moreover, it has at most one solution provided $P\geq 0$ and $Q\geq 0.$ 
    \end{lemma}
    
    In our case, the first two conditions are satisfied evidently, and so we are guaranteed that such a solution is unique; the third condition, as in \cite[Theorem 4.2]{Doan}, may be guaranteed by first performing a constant gauge transformation, and the fourth condition is guaranteed by taking $r$ sufficiently large (the integrals of the curvature terms are determined topologically by the Chern-Weil theory.)

    We then get a map $\mathcal{M}_\mathbb C(X, E_B, L)\to \mathcal{M}(X, \mathbf p)$ which takes $(A, \alpha, \beta)$ to $e^h(A, \alpha, \beta)$ where $h$ is the unique solution to Equation \ref{kw}, which is clearly an inverse to $f$. Such inverse is continuous by the argument of \cite[Proposition 4.7]{Doan}.
\end{proof}

\subsection{Real Analytic Structure of Holomorphic Moduli Space}

This subsection is devoted to understanding the real analytic structure of the holomorphic moduli space. Since it is the only case we need for our computations, we will do this in a slightly restricted setting, which is the setting in which all solutions to Equation \ref{holomsw} with $\alpha \not\equiv0$ have $\beta = 0$. Declare a parameter $\mathbf p$ to have Property (C) if this is true. For later, we record the following lemma, which is the primary motivation (and etymology) for this condition:

\begin{lemma}\label{property-c}
    If a parameter $\mathbf p$ has Property (C), then $\mathcal{M}(X, \mathbf p)$ is compact.
\end{lemma}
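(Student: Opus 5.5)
Under Property (C), every point of $\mathcal{M}(X,\mathbf p)$ with $\alpha\not\equiv 0$ has $\beta=0$. The plan is to show that such solutions satisfy uniform a priori bounds, as in classical Seiberg--Witten theory, and then run the standard compactness argument.

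The key point is that with $\beta=0$ the twisted spinor $\Phi=(\alpha,0)$ lies in $W^+\otimes E$ with a single nonzero component. For such a spinor the quadratic term $\mu(\Phi,\Phi)$ behaves exactly like the usual Seiberg--Witten quadratic term: one has $\langle \rho^{-1}\mu(\Phi,\Phi)\Phi,\Phi\rangle \geq c|\Phi|^4$ for a universal $c>0$. In general multi-spinor theory this fails, because $\mu$ can vanish on nonzero $\Phi$. Here, however, $\mu(\Phi,\Phi)$ is determined by $\tr_E(\alpha\otimes\alpha^*)=|\alpha|^2$ in the $L$-component, so the degeneracy does not occur.

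\textbf{Step 1: $C^0$ bound.} Apply the Weitzenb\"ock formula for $D^+_{A'\otimes B}$ at a maximum of $|\Phi|^2$. The extra curvature term $F_B$ is bounded, since $B$ is fixed, and the scalar curvature term and $\eta$ are fixed. This yields
$$|\alpha|^2\leq C(g,B,\eta)$$
uniformly. Equivalently, and more concretely, one can work on $|\alpha|^2$ using the holomorphic equations (\ref{holomsw}) with $\beta=0$. Since $\delbar_{AB}\alpha=0$, one has $\tfrac12\Delta|\alpha|^2 = -|\nabla_{AB}\alpha|^2 + \langle \sqrt{-1}\Lambda(F_A+F_B)\alpha,\alpha\rangle$ up to sign conventions. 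Substituting the third equation of (\ref{holomsw}) gives the term $-\frac14|\alpha|^4$, and the maximum principle then gives the bound.

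\textbf{Step 2: higher regularity and compactness.} With $|\alpha|$ bounded, the curvature $F_A$ is bounded in $C^0$, because $F_A^{2,0}=0$ and $\Lambda F_A$ is given by the third equation. Put $A$ in Coulomb gauge relative to a fixed reference connection, using Hodge theory and the fact that the harmonic part lies in the compact torus $H^1(X;\mathbb{R})/H^1(X;\mathbb{Z})$. Elliptic bootstrapping on the system consisting of the Dirac equation and $d^+a$ plus $d^*a$ then gives $C^k$ bounds for all $k$. Arzel\`a--Ascoli extracts a subsequence converging modulo gauge, and the limit is again a solution.

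\textbf{Step 3: exclude degeneration.} It remains to check that a limit cannot leave $\mathcal{M}(X,\mathbf p)$. A limit with $\alpha\equiv 0$ is excluded in one of two ways. One option is to check that the moduli space, by its definition via Theorem \ref{complex-moduli}, consists exactly of the triples with $\alpha\not\equiv 0$, and then show $\|\alpha\|_{L^2}$ is bounded below. To do this, integrate the third equation of (\ref{holomsw}): $\int\frac14|\alpha|^2 = \int\bigl(r+\tfrac12\sqrt{-1}\Lambda F_{\nabla_0}-\sqrt{-1}\Lambda F_A\bigr)$, and the right-hand side is topological and positive for $r$ large. Alternatively, one works directly in $\mathscr{B}$, where reducibles are allowed as limits; the same integral identity shows there are no reducibles anyway, since it forces $\int|\alpha|^2>0$.

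The main obstacle is Step 1. One must verify carefully that the quadratic term in the Weitzenb\"ock formula (with $\beta=0$) produces exactly $-c|\alpha|^4$, with the correct sign and no loss from the $\tr_E$, and that the $F_B$ term enters only linearly in $|\alpha|^2$. If the direct computation is awkward, the plan is to use the holomorphic version: a Bochner formula for the holomorphic section $\alpha$ of $\mathscr{E}\otimes\mathscr{L}$, where $\sqrt{-1}\Lambda F_B$ is a bounded endomorphism.
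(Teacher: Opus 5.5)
Your argument is correct, but it takes a different route from the paper. The paper never proves a pointwise bound. It integrates the third equation of (\ref{holomsw}) and uses Chern--Weil to get $\pi(2c_1(L)-c_1(K_X))\cdot[\omega]+r[\omega]^2=\|\alpha\|_{L^2}^2-\|\beta\|_{L^2}^2$. It then notes that $\beta=0$ turns this into an a priori bound on $\|\Phi\|_{L^2}$, and reads off compactness from case (1) of Taubes' theorem (Theorem \ref{taubes}), which needs only an $L^2$ bound and is built to cope with the degeneracy of $\mu$.

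You instead use the fact that for $\Phi=(\alpha,0)$ the quadratic term is nondegenerate: $\langle\mu(\Phi,\Phi)\Phi,\Phi\rangle=c|\alpha|^4$ with $c>0$. So the classical maximum-principle (or holomorphic Bochner) estimate gives a pointwise bound on $|\alpha|^2$ linear in $r$, and ordinary Seiberg--Witten Coulomb-gauge bootstrapping finishes the job. You use the integrated identity only to rule out $\alpha\equiv 0$.

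Your version is self-contained and shows clearly why Property (C) is the right hypothesis. The paper's version is a two-line argument that hands the analysis off to Taubes' much deeper compactness theorem.

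There are two small inaccuracies, neither fatal.
\begin{enumerate}
\item $F_A^{2,0}=0$ together with the third equation controls only $F_A^+$, not the anti-self-dual part $F_A^-\in\Omega^{1,1}_0$. So $F_A$ is not bounded in $C^0$. Your bootstrapping only uses $d^+a$ and $d^*a$, so you never need that claim.
\item Property (C) says nothing about solutions with $\alpha\equiv 0$, $\beta\not\equiv 0$. These are excluded for large $r$ by the general form of the integrated identity, whose left side $\|\alpha\|_{L^2}^2-\|\beta\|_{L^2}^2$ is forced to be positive, not by the $\beta=0$ version you wrote. Alternatively, the same maximum principle handles them, since $\mu$ is also nondegenerate on $(0,\beta)$.
\end{enumerate}
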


\begin{proof}
    For any solution $(A, (\alpha, \beta)),$ we may take the integral of the third equation of (\ref{holomsw}). By Chern-Weil theory: $$\pi\left(2c_1(L)-c_1(K_X)\right)\cdot[\omega] +r[\omega]^2 = ||\alpha||_{L^2}^2 - ||\beta||_{L^2}^2.$$ Since $\beta = 0,$ this gives us an \emph{a priori} bound on $||(\alpha, \beta)||_{L^2}^2,$ which guarantees the compactness of the moduli space $\mathcal{M}(X, \mathbf p)$ (there are many ways to see this; for instance, see Theorem \ref{taubes} in this paper for more details.)
\end{proof}

 The moduli space of holomorphic triples itself has a deformation theory, which we will describe by the \emph{holomorphic deformation complex} $\mathcal{C}_\mathbb C.$ At a solution $(A, \alpha, 0)$ we define this complex as follows: $$\Omega^0\xrightarrow{\delta^0}\Omega^{0,1}_X\oplus \Omega^0(L\otimes E)\oplus \Omega^{0,2}(L\otimes E)\xrightarrow{\delta^1} \Omega^{0,2}_X\oplus \Omega^{0,1}(L\otimes E),$$ where all sections of bundles and differential forms are taken with complex coefficients. The map $\delta^0$ is the linearization of the complex gauge group action (note the gauge group action looks different here because we consider a unitary connection as determined by its $(0,1)$-part): $$h\mapsto (-\overline\partial h, h\alpha, 0)$$ and the map $\delta^1$ is given by the linearization of the first two equations of Equation (\ref{holomsw}): $$(a, \alpha', \beta')\mapsto (\overline\partial a +\tr(\alpha^*\otimes \beta'), \delbar_{AB}\alpha' + \delbar^*_{AB}\beta' + a\alpha).$$ We will denote the $j$th cohomology of this complex by $H^j_{\mathbb C}$ or $H^j_{\mathbb C, A, \alpha}$. 

The following theorem comes immediately from the argument of \cite[Theorem 2.1]{FM}:

\begin{theorem}\label{deformation-theories}
    Suppose $(A, \alpha, 0)$ is an $E_B$-holomorphic triple. Then for $j = 1, 2$ there are isomorphisms of real vector spaces $H^j_{A, (\alpha, 0)}\cong H^j_{\mathbb{C}, A, \alpha}$ which commute with the action of the gauge group $\mathcal{G}.$ 
\end{theorem}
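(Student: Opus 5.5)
The plan follows the Friedman–Morgan argument: identify both complexes as the same elliptic complex after the Kähler splittings, then compare cohomology by a Hodge-theoretic splitting.

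\textbf{Step 1: the real complex in Kähler terms.} On a Kähler surface we have
\[
\Omega^1_X(i\mathbb{R})\cong\Omega^{0,1}_X, \qquad a\mapsto a^{0,1},
\]
with the imaginary-valued $a$ recovered as $a^{0,1}-\overline{a^{0,1}}$. Also
\[
\Omega^+_X(i\mathbb{R})\cong\Omega^{0,2}_X\oplus\Omega^0_X(\mathbb{R})\cdot\omega .
\]
Under these identifications, and with the splitting $W^+\otimes E=(L\otimes E)\oplus(K_X^{-1}\otimes L\otimes E)$, the map $F_{A',\Phi}$ at $\Phi=(\alpha,0)$ splits into two pieces.
\begin{itemize}
\item The $\Omega^{0,2}\oplus\Omega^{0,1}(L\otimes E)$ component is exactly $\delta^1$, read off from linearizing the first two lines of (\ref{holomsw}).
\item A real component is $\Lambda$ applied to the linearized curvature, minus $\tfrac12\mathrm{Re}\langle\alpha,\alpha'\rangle$ (the $\beta$-term drops out since $\beta=0$).
\end{itemize}
Similarly, $\mathcal{C}^0=\Omega^0(i\mathbb{R})$ is the imaginary half of the complex $\Omega^0$ in $\mathcal{C}_\mathbb{C}$.

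\textbf{Step 2: $j=1$.} Write $h=h_1+ih_2$. The real part $h_1$ generates the noncompact directions of $\mathcal{G}_\mathbb{C}$, whose linearized action changes $\Lambda F$ by $\Delta h_1$ plus $\tfrac12|\alpha|^2h_1$. I will show that the extra real equation is a slice for this action. The operator
\[
h_1\mapsto \Delta h_1+\tfrac12|\alpha|^2h_1
\]
is invertible when $\alpha\not\equiv0$: it is positive, since pairing with $h_1$ gives $\|dh_1\|^2+\tfrac12\|\alpha h_1\|^2$. Hence
\[
\ker\delta^1/\operatorname{im}\delta^0\;\cong\;\bigl(\ker\delta^1\cap\{\text{real equation}\}\bigr)\big/\operatorname{im}\bigl(\delta^0|_{i\mathbb{R}}\bigr)=H^1_{A',(\alpha,0)}.
\]
Concretely, given a class in $H^1_\mathbb{C}$, we subtract $\delta^0(h_1)$ for the unique real $h_1$ making the real equation hold. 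This is equivariant under $\mathcal{G}$, because everything is built from $\mathcal{G}$-equivariant operators.

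\textbf{Step 3: $j=2$.} This is the main obstacle. Here $H^2$ of the real complex equals $H^2_\mathbb{C}$ plus a possible contribution from the $\Omega^0(\mathbb{R})\omega$ factor, and I need that extra factor to be hit surjectively. Given $f\in\Omega^0(\mathbb{R})$, I will solve for it using the image of $\Omega^1\times\Gamma(W^+\otimes E)$ restricted to deformations of the form $\delta^0(h_1)$. These lie in $\ker\delta^1$ and map to $\Delta h_1+\tfrac12|\alpha|^2h_1$ in the $\omega$-component, which is surjective by the invertibility from Step 2. Therefore the cokernel of the real complex is the cokernel of $\delta^1$ modulo $\delta^1(\text{everything})$, which gives $H^2_{A',(\alpha,0)}\cong H^2_{\mathbb{C}}$.

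The work lies in carefully checking the sign and normalization conventions: the factor $\tfrac14$ in (\ref{holomsw}), the relation $A'=A^2\otimes\nabla_0'$, and the identification of $\mathrm{Im}\,\mu$ with the pair consisting of $\tr(\alpha^*\otimes\beta')$ and $\mathrm{Re}\langle\alpha,\alpha'\rangle$. This is just a computation as in \cite[Theorem 2.1]{FM}. Throughout, I will use that $\beta=0$ and $F_B^{2,0}=0$, so that $\delta^1\circ\delta^0=0$.
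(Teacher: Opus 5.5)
Your proposal is correct and follows the same route as the paper. The paper gives no argument of its own beyond invoking \cite[Theorem 2.1]{FM}, and that argument is exactly your comparison: the non-unitary (real) gauge directions of $\mathcal{C}_\mathbb{C}$ are traded against the $\omega$-component of the linearized curvature equation, using that $\Delta+\frac12|\alpha|^2$ is invertible when $\alpha\not\equiv 0$.
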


    %First, we show: \begin{lemma} $\delta^1(a, \alpha', \beta') = 0$ implies $\beta' = 0$ and $\delbar a = 0.$ \end{lemma}

    %\begin{proof} Since $F_A^{2,0} = F_B^{2,0} = 0,$ applying the operator $\delbar_{AB}$ to the equation $\delbar_{AB}\alpha' + \delbar^*_{AB}\beta' + a\alpha = 0$ gives us: $$0 = \delbar_{AB}\delbar^*_{AB}\beta' + (\delbar a)\alpha = \delbar_{AB}\delbar^*_{AB}\beta' + \tr(\alpha^*\otimes \beta')\alpha.$$ Arguing as in the proof of Lemma \ref{holomorphic}, we may take the inner product with $\beta'$ and integrate by parts to get: $$\int_X |\tr(\alpha^*\otimes \beta')|^2 + |\delbar_{AB}^*\beta|^2 = 0,$$ and we conclude that $\beta' = 0$ and $\delbar a = 0$ as claimed. \end{proof}

    %Next, we demonstrate an isomorphism of the first cohomology groups. By our lemma, $H^1_{\mathbb{C}, A, \alpha}$ is the quotient of $(a, \alpha)$ satisfying $\delbar a = \delbar \alpha'+a\alpha = 0$ by elements of the form $(-\delbar h, h\alpha).$

%If $\mathbf p$ has Property (C) and $\mathcal{M}(X, \mathbf p)$ is irreducible and Zariski smooth, then for each holomorphic triple $(A, \alpha, 0)$ in a fixed connected component of $\mathcal{M}(X, \mathbf p)$ the space $H^2_{\mathbb C, A, \alpha}$ has a constant dimension, and as before these spaces glue together to form a vector bundle over each component of $\mathcal{M}$, which we denote $\mathfrak{O}_\mathbb C.$ In this case, it is a complex vector bundle.

\subsection{Computing $e(\mathfrak{O})$}

In this section, we assume that $\mathbf p = (g, B, r\omega)$ has been chosen such that $F_B^{2,0} = 0$, with Property (C), and such that $\mathcal{M}(X, \mathbf p)$ is irreducible and Zariski smooth. Under this circumstance, we will provide a formula for the Euler class of the obstruction bundle $e(\mathfrak{O})$. 

Consider the space $\widetilde{\mathcal{M}}(X, \mathbf p)$, which we define to be the pre-image of $\mathcal{M}(X, \mathbf p)$ under the map $\mathscr{C}\to\mathscr{B}.$ Over this space, we may form the complex of trivial bundles $\mathcal{C}_\mathbb{C}\times \widetilde{\mathcal{M}}(X, \mathbf p).$ Since the complex $\mathcal{C}_\mathbb{C}$ is preserved by \emph{unitary} gauge transformations, we take the quotient of this trivial bundle complex by $\mathcal{G}$ to define a complex of vector bundles over $\mathcal{M}(X, \mathbf p)$ which we will also denote $\mathcal{C}_\mathbb{C}.$

In the situation when $\mathcal{M}(X, \mathbf{p})$ is Zariski smooth and irreducible, the first and second cohmologies of this complex $\mathcal{C}_\mathbb{C}$ are vector bundles over $\mathcal{M}(X, \mathbf{p}).$ The argument of \cite[Theorem 4.7]{Doan} shows that for $j=1,2,$ the isomorphisms $H^j_{A, (\alpha, 0)}\cong H^j_{\mathbb{C}, A, \alpha}$ vary continuously as we vary $(A, \alpha).$ This, combined $\mathcal{G}$-equivariance of the isomorphisms in Theorem \ref{deformation-theories} gives us:

\begin{theorem}\label{obstruction-bundle}
    Suppose $\mathbf p = (g, B, 2r\omega)$ has that $F_B^{2,0} = 0$ and has Property (C). If moreover $\mathcal{M}(X, \mathbf p)$ is irreducible and Zariski smooth, then for $j=1,2,$ the isomorphisms $H^j_{A, (\alpha, 0)}\cong H^j_{\mathbb{C}, A, \alpha}$ for each $(A, \alpha, 0)$ glue together to give a isomorphisms of real vector bundles. In the $j=2$ case we have in particular: $\mathfrak{O}\cong H^2(\mathcal{C}_\mathbb{C})$ over $\mathcal{M}(X, \mathbf p).$
\end{theorem}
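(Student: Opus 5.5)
The plan is to build the bundle isomorphism from the pointwise isomorphisms of Theorem \ref{deformation-theories}. The first step is to realize both sides as honest vector bundles over $\mathcal{M}(X, \mathbf p)$. On the real side, Zariski smoothness and irreducibility give that $H^1_{A,(\alpha,0)}$ and $H^2_{A,(\alpha,0)}$ have locally constant rank. By the facts cited from \cite[Section 2]{Doan}, they assemble into $T\mathcal{M}$ and $\mathfrak{O}$. On the holomorphic side, I will work on $\widetilde{\mathcal{M}}(X, \mathbf p)$ with the trivial complex $\mathcal{C}_\mathbb{C}\times\widetilde{\mathcal{M}}(X,\mathbf p)$. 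Its differentials $\delta^0,\delta^1$ depend continuously, indeed smoothly, on $(A,\alpha)$. Pointwise, $H^j_{\mathbb{C},A,\alpha}\cong H^j_{A,(\alpha,0)}$, so these cohomologies also have locally constant rank. Realizing them as $\ker(\delta^{j}) \cap \ker((\delta^{j-1})^*)$ (harmonic representatives for the elliptic complex, after adding gauge fixing), constant rank implies that the family of harmonic spaces is a continuous vector bundle over $\widetilde{\mathcal{M}}$. This uses the standard continuity of kernels of a continuous family of Fredholm operators of constant kernel dimension.

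The second step is to show that the pointwise isomorphisms of Theorem \ref{deformation-theories} vary continuously in $(A,\alpha)\in\widetilde{\mathcal{M}}$. Following \cite[Theorem 4.7]{Doan}, I will write the isomorphism explicitly, as in \cite[Theorem 2.1]{FM}. On $H^1$, a real deformation $(a,(\phi,\psi))$ with $a\in\Omega^1(i\mathbb{R})$ maps to $(a^{0,1},\phi,\psi)$. On $H^2$, the map is induced by the identification $\Omega^+(i\mathbb{R})\cong \Omega^{0,2}\oplus\mathbb{R}\omega$ together with $W^-\otimes E\cong\Omega^{0,1}(L\otimes E)$. The $\omega$-component is killed by using the moment-map equation to absorb it with an imaginary gauge direction; this relies on $\alpha\neq0$ and on the unique solvability in the Kazdan--Warner Lemma \cite[Lemma 3.4]{Bryan-Wentworth}, applied to its linearization $\Delta h + 2(P+Q)h$. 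Every ingredient is an algebraic operation or the inverse of an invertible elliptic operator, namely $\Delta + \tfrac12|\alpha|^2$, depending continuously on $(A,\alpha)$. Hence, composed with the harmonic projections, the isomorphism is a continuous bundle map over $\widetilde{\mathcal{M}}$, and it is a fiberwise isomorphism by Theorem \ref{deformation-theories}.

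The third step is descent. Unitary gauge transformations act on both complexes, and the isomorphisms commute with $\mathcal{G}$ by Theorem \ref{deformation-theories}. Irreducibility makes $\mathcal{G}$ act freely on $\widetilde{\mathcal{M}}$ modulo constants, and the constant $S^1$ acts compatibly on both sides. So the bundle isomorphism over $\widetilde{\mathcal{M}}$ descends to one over $\mathcal{M}(X,\mathbf p)$. For $j=2$ this gives $\mathfrak{O}\cong H^2(\mathcal{C}_\mathbb{C})$. Property (C) enters because it ensures that every point has $\beta=0$, so that the complex $\mathcal{C}_\mathbb{C}$ as defined is the relevant one at every point.

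I expect the main obstacle to be the second step: verifying that the $H^2$ identification, which involves solving the linearized moment-map equation to eliminate the $\omega$-component, is continuous in the parameters. My approach is to bound the inverse of $\Delta+\tfrac12|\alpha|^2$ uniformly on compact subsets of $\widetilde{\mathcal{M}}$. This should follow because $\alpha\not\equiv0$ and irreducibility give a positive lower bound for $\int|\alpha|^2$ locally, and such a bound yields uniformly bounded inverses in $L^2_k$.
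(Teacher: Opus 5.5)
Your proposal is correct and follows essentially the same route as the paper: it forms the trivial complex $\mathcal{C}_\mathbb{C}$ over $\widetilde{\mathcal{M}}(X,\mathbf p)$, uses Zariski smoothness and irreducibility to get cohomology bundles, takes continuity of the pointwise isomorphisms of Theorem \ref{deformation-theories} from the argument of \cite[Theorem 4.7]{Doan}, and descends by $\mathcal{G}$-equivariance; your Step 2 just spells out the continuity argument the paper leaves to Doan. Two small fixes: the $\omega$-summand of $\Omega^+_X(i\mathbb{R})$ is $\Omega^0_X(i\mathbb{R})\cdot\omega$ rather than $\mathbb{R}\omega$, and invertibility of $\Delta+\tfrac12|\alpha|^2$ follows from positivity ($\int|dh|^2+\tfrac12|\alpha|^2h^2>0$ for $h\neq0$ when $\alpha\not\equiv0$), not from uniqueness in the nonlinear Kazdan--Warner lemma, which by itself does not imply injectivity of the linearization.
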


Let $v$ denote the virtual complex dimension of $\mathcal{M}(X, \mathbf p)$ and $a$ the actual complex dimension, so that Theorem \ref{obstruction-bundle} guarantees that the bundle $\mathfrak{O}$ is in fact a complex vector bundle of rank $a -v,$ and moreover: $$e(\mathfrak{O}) = c_{a-v}(\mathfrak{O}).$$

The symbol complex of $\mathcal{C}_\mathbb{C}$ is the complex $\mathcal{S}_\mathbb C$ given as the sum of the complex: $$\Omega^0_X\xrightarrow{\delbar} \Omega^{0,1}_X\oplus\xrightarrow{\delbar} \Omega^{0,2}_X,$$ with the following complex shifted by 1 degree to the right: $$\Omega^0(E\otimes L)\xrightarrow{\delbar_{BA}}\Omega^{0,1}(E\otimes L)\xrightarrow{\delbar_{BA}}\Omega^{0,2}(E\otimes L)$$ which, in exactly the same may be thought of as a complex of vector bundles over $\mathcal{M}(X, \mathbf p)$. Clearly $S_\mathbb C$ is an elliptic complex, so its index is well-defined as an element of $K(\mathcal{M}(X, \mathbf p)).$ By the homotopy invariance of index in families, and the fact that the 0th cohomologies of $\mathcal{S}_\mathbb C$ and $\mathcal{C}_\mathbb C$ are clearly trivial as bundles over $\mathcal{M}$, the index $\ind S_\mathbb C$ is stably equivalent to $[H^2(\mathcal{C}_\mathbb C)] - [H^1(\mathcal{C}_\mathbb C)]$ in the $K$-theory of $\mathcal{M}.$ But $H^1(\mathcal{C}_\mathbb C)$ is the tangent bundle $T\mathcal{M},$ and $H^2(\mathcal{C}_\mathbb C)$ is precisely the bundle $\mathfrak{O},$ so we have the formula: \begin{equation}\label{e(o)} e(\mathfrak{O}) = c_{a-v}(\mathfrak{O}) = \left[c(T\mathcal{M})^{-1}c(\ind \mathcal{S}_\mathbb{C})\right]_{a-v}.\end{equation}

\section{Compactness Theorems}

This section contains the main novel observation of this paper, which is that a very simple condition on the parameter $B$ implies that the multi-spinor moduli space is compact for all sufficiently nearby parameters.

\subsection{Weakly Unobstructed Yang-Mills Connections}

We see from the holomorphic description of the moduli spaces that different choices of $B$ that induce the same holomorphic structure $\mathscr{E}$ on $E$ result in the isomorphic moduli spaces. In the case when $\mathscr{E}$ is stable, we therefore lose little by assuming that $B$ is in fact an anti-self-dual Yang-Mills connection, i.e. $\Lambda F_B = 0$ as well as $F_B^{2,0} = 0.$ Recall that to each anti-self-dual Yang-Mills connection $B$ on $E$ we may associate a deformation complex: $$\Omega^0_X(\mathfrak{su} E) \xrightarrow{d_B}\Omega^1_X(\mathfrak{su} E)\xrightarrow{d_B^+}\Omega^+_X(\mathfrak{su} E),$$ with cohomologies $H^j_B.$ In the case where $X$ is K\"ahler, anti-self-dual Yang-Mills connections are holomorphic, and if $E$ is irreducible for each $j=0,1,2,$ the group $H^j_B$ agrees with the sheaf cohomology $H^j_X(\End_0 E_B)$ (see \cite[Section 6.4.2]{DK}.)

\begin{definition}
    Let $X$ be a Riemannian 4-manifold and $E\to X$ a complex vector bundle with structure group $SU(N)$. We say a $SU(N)$ anti-self-dual Yang-Mills connection $B$ is \emph{unobstructed} if $H^2_B = 0.$

    If, moreover, $X$ is K\"ahler, we say a $SU(N)$ anti-self-dual Yang-Mills connection $B$ is \emph{weakly unobstructed} if every element in $H^0_X((\End_0 E_B)\otimes K_X)$ with rank $\leq 1$ at each point is identically zero. We abbreviate weakly unobstructed anti-self-dual Yang-Mills as WOYM.
\end{definition}

\begin{remark}
    By Serre duality, and the fact that the bundle $\End_0 E_B$ is isomorphic to its own dual, $H^0_X((\End_0 E_B)\otimes K_X)$ is dual to the obstruction space $H^2_X(\End_0 E_B)$. From \cite[Section 6.4.2]{DK} we have an isomorphism $H^2_B\cong H^2_X(\End_0 E_B)\oplus H^0_B\cdot\omega,$ hence unobstructedness indeed implies weak unobstructedness.
\end{remark}

A surprisingly simple argument shows that $\mathcal{M}(X, g, B, 2r\omega)$ is compact whenever $B$ is WOYM:

\begin{theorem}
    If $B$ is WOYM with respect to the K\"ahler metric $g$, then for all sufficiently large $r,$ the parameter $(g, B, 2r\omega)$ has Property (C), so in particular $\mathcal{M}(X, g, B, 2r\omega)$ is compact.
\end{theorem}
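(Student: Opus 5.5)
The plan is to use the holomorphic description from Lemma \ref{holomorphic} and Theorem \ref{complex-moduli}. Then only Property (C) needs to be shown; compactness follows from Lemma \ref{property-c}. So take a solution $(A,(\alpha,\beta))$ of Equation (\ref{holomsw}) with $\alpha\not\equiv 0$. By Lemma \ref{holomorphic}, $F_A^{2,0}=0$ and $\delbar_{AB}\alpha=0$. Also $\delbar_{AB}^*\beta=0$, which on a K\"ahler surface means $\beta^*$ is a holomorphic section of $\mathscr{E}^*\otimes\mathscr{L}^*\otimes K_X$. Finally $\tr(\alpha\otimes\beta^*)=0$. We must show $\beta\equiv 0$.

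The key construction is the tensor product $\alpha\otimes\beta^*$. It is a holomorphic section of $\mathscr{E}\otimes\mathscr{E}^*\otimes K_X=\End(\mathscr{E})\otimes K_X$, because the factors $\mathscr{L}$ and $\mathscr{L}^*$ cancel. Its trace vanishes, so it lies in $H^0_X((\End_0 E_B)\otimes K_X)$. At each point it is a rank-one tensor $v\otimes w^*$, or zero, so its rank is $\leq 1$ everywhere. Weak unobstructedness then forces $\alpha\otimes\beta^*\equiv 0$.

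Next we use unique continuation. On the open dense set where $\alpha\neq 0$, the vanishing of $\alpha\otimes\beta^*$ gives $\beta=0$. Since $\alpha\not\equiv 0$ is a holomorphic section, its zero set is a proper analytic subset, whose complement is dense. Continuity then gives $\beta\equiv 0$ on all of $X$, which is Property (C). Lemma \ref{property-c} then yields compactness, using the $L^2$ bound from integrating the third equation.

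The role of ``$r$ sufficiently large'' is to place us in the setting of Theorem \ref{complex-moduli}, where solutions correspond to holomorphic triples. The argument above does not actually need large $r$, so the hypothesis is harmless.

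I expect the main obstacle to be bookkeeping rather than anything deep. The conventions for $\beta$ versus $\beta^*$ and for $\mathscr{L}$ must be arranged so that $\alpha\otimes\beta^*$ is genuinely holomorphic in the twisted bundle, using $\delbar^*_{AB}\beta=0 \iff \beta^*$ holomorphic via the Hodge star and the metric. One must also check that the trace condition produced by Lemma \ref{holomorphic} is exactly the tracelessness needed to land in $\End_0$.
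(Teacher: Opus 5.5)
Your proof is correct and follows the paper's argument: you form the traceless, pointwise rank-$\leq 1$ holomorphic section $\alpha\otimes\beta^*$ of $\End_0\mathscr{E}\otimes K_X$, use weak unobstructedness to conclude it vanishes identically, and then deduce $\beta\equiv 0$. The one difference is that the paper, after concluding that one of $\alpha,\beta$ vanishes identically, rules out $\alpha\equiv 0$ using the integrated third equation, whose left-hand side $\pi(2c_1(L)-c_1(K_X))\cdot[\omega]+r[\omega]^2$ is positive for $r$ large; you instead use the hypothesis $\alpha\not\equiv 0$ built into the definition of Property (C), which is why large $r$ plays no essential role in your version.
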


\begin{proof}
    Let $B$ be as in the statement of the theorem and $(\mathscr{L}, \alpha, \beta)$ be a $B$-holomorphic triple. Then, $\alpha\otimes\beta^*\in H^0_X(\End \mathscr{E}\otimes K_X)$ is traceless, so it defines a section of $H^0_X(\End_0 \mathscr{E}\otimes K_X),$ By the WOYM assumption, all elements of this space with rank $\leq 1$ pointwise are identically zero. Hence, $\alpha\otimes\beta^* = 0,$ so at each point $x\in X,$ one of $\alpha(x)$ or $\beta(x)$ must equal 0. Since they are both holomorphic, one must vanish identically by unique continuation. The integral of the third holomorphic Seiberg-Witten equation then gives us: $$\pi\left(2c_1(L)-c_1(K_X)\right)\cdot[\omega] +r[\omega]^2 = ||\alpha||_{L^2}^2 - ||\beta||_{L^2}^2$$ by Chern-Weil theory, so if we assume $r$ is sufficiently large the left-hand side is positive, so we must have $\beta = 0,$ which proves that $(g, B, 2r\omega)$ has Property (C), and compactness follows from Lemma \ref{property-c}.
\end{proof}

In fact, under this same assumption, we will prove a much stronger compactness theorem, which is the main theorem of this section:

\begin{theorem}\label{strong_compact}
    If $B$ is WOYM, then there exists some open neighborhood $\mathcal{U}$ of $\mathbf p = (g, B, 2r\omega)$ in the space $\text{Met}(M)\times \mathcal{A}(E)\times \Omega^2(M)$ such that for any $(g', B', \eta') \in \mathcal{U}$ the moduli space $\mathcal{M}(X, g', B', \eta')$ is compact. In other words, $\mathbf p \in \mathscr{U}_{cpt}.$
\end{theorem}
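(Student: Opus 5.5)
We argue by contradiction, using Taubes' compactness theorem for multi-spinor Seiberg-Witten equations \cite{Taubes16}. Suppose no such neighborhood exists. Then there are parameters $\mathbf p_i = (g_i, B_i, \eta_i)\to \mathbf p = (g, B, 2r\omega)$ in $C^\infty$ and solutions $(A_i', \Phi_i)\in \mathcal{M}(X, \mathbf p_i)$ such that the moduli spaces $\mathcal{M}(X, \mathbf p_i)$ are non-compact. Since the only source of non-compactness is an unbounded $L^2$-norm of the spinor (with a bound on $\|\Phi\|_{L^2}$, the usual Weitzenb\"ock/bootstrapping argument gives compactness uniformly in a small neighborhood of parameters), we may assume $\|\Phi_i\|_{L^2}\to\infty$. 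We then rescale: set $\Psi_i = \Phi_i/\|\Phi_i\|_{L^2}$. Taubes' theorem, which tolerates parameters converging in $C^\infty$, provides a closed nowhere-dense set $Z\subset X$, a limiting connection $A_\infty$ on $S|_{X\setminus Z}$ (after gauge), and a limit $\Psi_\infty$ on $X\setminus Z$ with $\|\Psi_\infty\|_{L^2}=1$, $|\Psi_\infty|$ extending continuously to $X$ and vanishing on $Z$, such that
\[
D^+_{A_\infty\otimes B}\Psi_\infty = 0, \qquad \mu(\Psi_\infty,\Psi_\infty) = 0
\]
on $X\setminus Z$. Here the limiting equations are taken with respect to the limit parameter $(g,B)$.

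The key step is to exploit the K\"ahler structure of the limit, together with the WOYM hypothesis, to rule out such a limit. Write $\Psi_\infty = (\alpha,\beta)$ according to $W^+ = L\oplus K_X^{-1}L$ on $X\setminus Z$. The condition $\mu(\Psi_\infty,\Psi_\infty)=0$ says that the tracefree part of $\tr_E(\Psi\otimes\Psi^*)$ vanishes. Componentwise this gives $|\alpha|=|\beta|$ and $\langle \alpha,\beta\rangle_E$-type pairing $\tr(\alpha\otimes\beta^*)=0$. Thus $F_{A_\infty}$ is flat in the limit, since Taubes' limit connection is flat. Because $F_B^{2,0}=0$, the argument of Lemma \ref{holomorphic} applies locally away from $Z$, and we expect $\delbar_{A_\infty B}\alpha = 0$ and $\delbar^*_{A_\infty B}\beta=0$. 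In practice this is done by integrating with cutoffs, using Taubes' estimates on how $|\Psi_i|$ decays near $Z$ to kill the boundary terms. Then $\sigma := \alpha\otimes\beta^*$ is a holomorphic section of $\End_0 E_B\otimes K_X$ on $X\setminus Z$. It is traceless, of rank $\le 1$ pointwise, and its norm $|\alpha||\beta|$ extends continuously by zero across $Z$.

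The main obstacle, which we would treat first, is extending $\sigma$ holomorphically across $Z$. The flat line bundle $L$ with connection $A_\infty$ may have $\mathbb{Z}/2$-type holonomy around $Z$, so $\alpha$ and $\beta$ individually are only twisted sections. However, $\sigma=\alpha\otimes\beta^*$ involves $L\otimes L^{-1}$, so it is untwisted and is a genuine bounded holomorphic section on $X\setminus Z$. Since $Z$ has Hausdorff codimension at least $2$ by Taubes' regularity results (or simply because $\sigma$ is continuous and holomorphic off the zero set of the continuous function $|\sigma|$, via Rad\'o's theorem), $\sigma$ extends to $\sigma\in H^0_X(\End_0 E_B\otimes K_X)$ with rank $\le1$ everywhere. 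WOYM then forces $\sigma\equiv0$, so $\alpha\otimes\beta^*=0$, and hence $|\alpha||\beta|=0$ everywhere. Combined with $|\alpha|=|\beta|$, this gives $\Psi_\infty\equiv 0$, contradicting $\|\Psi_\infty\|_{L^2}=1$. This contradiction proves the bound on $\|\Phi\|_{L^2}$ and hence the compactness of $\mathcal{M}(X, \mathbf p')$ for all $\mathbf p'$ in some neighborhood $\mathcal U$.
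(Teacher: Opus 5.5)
Your route is the paper's: argue by contradiction, use Taubes' theorem with $C^\infty$-convergent parameters to produce a limiting configuration for the K\"ahler parameter $(g,B)$, show $\alpha$ and $\beta^*$ are holomorphic on $X-Z$, extend $\sigma=\alpha\otimes\beta^*$ across $Z$, and kill it with the WOYM hypothesis. Two of your variations are valid and slightly cleaner than the paper's.
\begin{itemize}
\item Rad\'o's theorem (applied componentwise in a local holomorphic frame; it holds in several variables) replaces the pluripolar-set extension argument. Your other suggestion, Taubes' Hausdorff-dimension bound on $Z$, is only available for $N=2$.
\item Deducing $\Psi_\infty=0$ from the pointwise identities $|\alpha||\beta|=0$ and $|\alpha|=|\beta|$ avoids unique continuation.
\end{itemize}
The genuine gap is in the step giving $\delbar_{AB}\alpha=0$ and $\delbar^*_{AB}\beta=0$ on $X-Z$.

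You justify that step by asserting that $F_{A_\infty}$ is flat, apparently as a consequence of $\mu(\Psi_\infty,\Psi_\infty)=0$. That is wrong, for three reasons.
\begin{itemize}
\item After dividing by $\|\Phi_i\|_{L^2}$, the curvature equation gets multiplied by $\|\Phi_i\|_{L^2}^{-2}$ and disappears in the limit, so flatness cannot be read off from $\mu=0$.
\item The approximating parameters $(g_i,B_i,\eta_i)$ are not K\"ahler, so Lemma \ref{holomorphic} cannot be used along the sequence either.
\item The limiting curvature is given by condition (f) of Definition \ref{limiting-config}, $F_{A'}=-|\Phi|^{-2}\langle\Phi,(R_{spin}+F_B)\Phi\rangle$, which has no reason to vanish. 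Flatness is special to the rank-two case (Theorem \ref{taubes4}).
\end{itemize}
What you actually need is only $F_A^{2,0}=0$. That makes $\delbar_{AB}^2=0$, so applying $\delbar_{AB}$ to $\delbar_{AB}\alpha+\delbar^*_{AB}\beta=0$ yields $\delbar_{AB}\delbar^*_{AB}\beta=0$. This follows from (f), because $R_{spin}$ (given by the Ricci form) and $F_B$ are both of type $(1,1)$.

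Second, the integration by parts is global on $X-Z$, not local. The error term is not controlled by a decay rate for $|\Psi|$ near $Z$: a distance-based cutoff would need control of the volume of neighbourhoods of $Z$, which you do not have at this stage. It is controlled instead by condition (c), $|\nabla_{AB}\Psi|\in L^2$, used with a cutoff $\rho_\epsilon=f\bigl((|\Psi|-\epsilon)/\epsilon\bigr)$ adapted to the level sets of $|\Psi|$. On the support of $d\rho_\epsilon$ one has $|\beta|\leq 2\epsilon$ and, by Kato, $|d\rho_\epsilon|\lesssim\epsilon^{-1}|\nabla_{AB}\Psi|$. Hence $\int\rho_\epsilon|\delbar^*_{AB}\beta|^2\lesssim\int_{\{|\Psi|\leq 2\epsilon\}}|\nabla_{AB}\Psi|^2\to 0$.

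Once you add (c) and (f), the two properties of limiting configurations missing from your list, the rest of your argument goes through.
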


\begin{remark}
    This can be viewed as a direct analog of \cite[Lemma 2.25]{Doan} in the Riemann surface case.
\end{remark}

The main ideas of the proof of this theorem, which will be carried out in the next two sections, are a general compactification for Seiberg-Witten-like moduli spaces due to Taubes \cite{Taubes16} and a holomorphic interpretation of the limiting configurations in this compactification, which may be of independent interest.

\subsection{Review of Taubes Compactification}

We will first review Taubes' compactification of the multi-spinor moduli space. Here, the setting is any Riemannian manifold $(X, g)$, not necessarily K\"ahler, and with any $SU(N)$-connection $B.$

\begin{remark}
    I will follow \cite{Doan} and use the term \emph{limiting configuration}, not found in Taubes' original papers.
\end{remark}

In the below, let $R_{spin}$ denote the curvature of the spin$^c$ connection $\nabla$ on $\mathbb{S}^+$ defined by the metric $g$.

\begin{definition}\label{limiting-config}
    A limiting configuration $(A', \Phi, Z)$ on $X$ consists of the data:\begin{enumerate}
        \item $Z\subset X$ a closed, nowhere dense subset.
        \item A unitary connection $A'$ on the line bundle $S|_{X - Z}$.
        \item A section $\Phi\in \Gamma_{X-Z}(W^+\otimes E)$ satisfying: \begin{enumerate}
            \item $||\Phi||_{L^2(X-Z)} = 1.$
            \item The norm $|\Phi|$ extends to a H\"older continuous function on $X$ which is zero on $Z.$
            \item $|\nabla_{AB}\Phi|\in L^2(X-Z).$
            \item $D_{A'\otimes B}^+\Phi = 0$
            \item $\mu(\Phi, \Phi) = 0$
            \item $F_{A'} = - |\Phi|^{-2}\langle\Phi, (R_{spin}+F_B)\Phi\rangle$
        \end{enumerate}
    \end{enumerate}
\end{definition}

\begin{definition} \label{convergence}
    A sequence $(A_i',\Phi_i)$ is said to \emph{converge} to a limiting configuration $(A', \Phi, Z)$ if: \begin{enumerate}
        \item On compact subsets of $X-Z,$ \begin{enumerate}
            \item $A_i'\to A'$ in the weak $L^2_1$ topology, and
            \item $||\Phi_i||_{L^2}^{-1}\Phi_i\to \Phi$ in the $L^2_2$-topology.
        \end{enumerate}
        \item On $X,$ \begin{enumerate}
            \item $||\Phi_i||_{L^2}^{-1}|\Phi_i|\to |\Phi|$ in $L^2_1$ and $C^0$, and
            \item $||\Phi_i||_{L^2}^{-1}|\nabla_{A_iB}\Phi_i|\to |\nabla_{AB}\Phi|$ in $L^2$.
        \end{enumerate}
    \end{enumerate}
\end{definition}

With these definitions at hand, we may state Taubes' theorem:

\begin{theorem}[{\cite{Taubes16}}]\label{taubes}
    Let $(A_i',\Phi_i, g_i, B_i, \eta_i)$ be a sequence of solutions to Equation \ref{msw}, with $(g_i, B_i, \eta_i)\to (g, B, \eta)$ in $C^\infty.$ Then, \begin{enumerate}
        \item If $\liminf\limits_{i\to\infty}||\Phi_i||_{L^2}<\infty,$ there is a subsequence of $(A_i',\Phi_i)$ which converges modulo gauge to a solution $(A', \Phi)$ of Equation \ref{msw} with respect to $(g, B).$
        \item If $\liminf\limits_{i\to\infty}||\Phi_i||_{L^2}=\infty,$ there is a subsequence of $(A_i',\Phi_i)$ which converges modulo gauge to a limiting configuration $(A', \Phi, Z).$
    \end{enumerate}
\end{theorem}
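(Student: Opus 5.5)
Theorem \ref{taubes} is quoted from \cite{Taubes16}, so the plan here is to reconstruct the structure of Taubes' argument rather than to produce something new. The common starting point for both cases is the Weitzenb\"ock formula for $D^-_{A_i'\otimes B_i}D^+_{A_i'\otimes B_i}$. It expresses this operator as $\nabla^*\nabla$ plus the scalar curvature term, the term $F_{B_i}$, and $\tfrac12\rho(F^+_{A_i'})$. Substituting the curvature equation of (\ref{msw}) and pairing with $\Phi_i$, I expect the pointwise inequality
\begin{equation*}
\tfrac12\Delta|\Phi_i|^2+|\nabla_{A_iB_i}\Phi_i|^2+\tfrac12|\mu(\Phi_i,\Phi_i)|^2\leq C|\Phi_i|^2,
\end{equation*}
with $C$ uniform because $(g_i,B_i,\eta_i)\to(g,B,\eta)$ in $C^\infty$. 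The important feature is that, unlike the classical case, $|\mu(\Phi,\Phi)|^2$ is not bounded below by $c|\Phi|^4$. So no $C^0$ bound independent of $\|\Phi_i\|_{L^2}$ is available. The inequality does make $|\Phi_i|^2$ a subsolution, and mean-value estimates then give $\|\Phi_i\|_{C^0}\leq C\|\Phi_i\|_{L^2}$. Integrating the inequality gives $\|\nabla\Phi_i\|_{L^2}^2+\|\mu(\Phi_i,\Phi_i)\|_{L^2}^2\leq C\|\Phi_i\|_{L^2}^2$.

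\emph{Case (1).} After passing to a subsequence, $\|\Phi_i\|_{L^2}$ is bounded. The estimates above then bound $\Phi_i$ in $C^0$ and $L^2_1$, and the curvature equation bounds $F^+_{A_i'}$ in $C^0$. Since $\int|F_{A'}^+|^2-|F_{A'}^-|^2$ is topological, $F_{A_i'}$ is bounded in $L^2$. I would put $A_i'$ in Coulomb gauge relative to a fixed reference connection, using the compactness of the torus $H^1(X;\mathbb{R})/H^1(X;\mathbb{Z})$ to handle the harmonic part. The usual elliptic bootstrapping for the pair (Dirac equation, $d^+\oplus d^*$) then gives $C^\infty$ bounds and a convergent subsequence, exactly as in ordinary Seiberg-Witten theory. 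The limit solves (\ref{msw}) for $(g,B,\eta)$.

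\emph{Case (2).} Set $r_i=\|\Phi_i\|_{L^2}\to\infty$ and $\Psi_i=\Phi_i/r_i$. The normalized equations read $D^+\Psi_i=0$ and $\mu(\Psi_i,\Psi_i)=r_i^{-2}\rho(F^+_{A_i'}+\sqrt{-1}\eta_i)$. The integrated estimate above now gives $\|\nabla\Psi_i\|_{L^2}\leq C$ and $r_i^2\|\mu(\Psi_i,\Psi_i)\|^2_{L^2}\leq C$, so $\mu(\Psi_i,\Psi_i)\to0$ in $L^2$. The crucial step, and the one I expect to be the main obstacle, is a bound on $|\Psi_i|$ in a H\"older norm that is uniform in $i$. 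For this I would follow Taubes and use an Almgren-type frequency function
\begin{equation*}
N_i(x,s)=\frac{s\int_{B_s(x)}|\nabla\Psi_i|^2+r_i^2|\mu(\Psi_i,\Psi_i)|^2}{\int_{\partial B_s(x)}|\Psi_i|^2}.
\end{equation*}
The first task is to prove that $N_i$ is almost monotone in $s$, with error terms controlled by the geometry and by $r_i^{-2}$. The second is to bound it uniformly at small scales. From these one gets the decay $\int_{B_s(x)}|\Psi_i|^2\lesssim s^{4+2\gamma}$ near points where $|\Psi_i|$ is small, and hence the uniform H\"older continuity. Arzel\`a-Ascoli then gives $|\Psi_i|\to|\Psi|$ in $C^0$, and weak compactness gives convergence in $L^2_1$. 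Define $Z$ as the zero set of the limit $|\Psi|$. The frequency bound also prevents $|\Psi|$ from vanishing on an open set, by unique-continuation-type doubling, so $Z$ is nowhere dense.

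On a compact $K\subset X-Z$ we have $|\Psi_i|\geq c>0$ for large $i$. There one decomposes $\Psi_i$ into the component along the "rank one" locus $\{\mu=0\}$ and the transverse part. The equation $r_i^2\mu(\Psi_i,\Psi_i)=\rho(F^++\sqrt{-1}\eta_i)$, together with a Weitzenb\"ock-type identity for the transverse part, should show that the transverse part decays like $e^{-c\,r_i\,\mathrm{dist}}$. This yields a uniform $L^2$ bound on $F_{A_i'}$ over $K$, and, after gauge fixing, $L^2_2$ convergence of $\Psi_i$ and weak $L^2_1$ convergence of $A_i'$ on $K$. Passing to the limit in $D^+\Psi_i=0$ and $\mu(\Psi_i,\Psi_i)\to0$ gives items (3d) and (3e) of Definition \ref{limiting-config}. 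The curvature formula (3f) comes from taking the inner product of $\Psi_i$ with the curvature of $\nabla_{A_iB_i}$, which preserves $\Psi$'s line in the limit. Finally, the $L^2$ convergence of $|\nabla\Psi_i|$ on all of $X$ (item (2b) of Definition \ref{convergence}) follows from the frequency estimate showing that no energy concentrates near $Z$.
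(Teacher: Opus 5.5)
\textbf{Verdict: there is a genuine gap.} The paper does not prove this statement. It quotes \cite{Taubes16} and, citing a talk, asserts that Taubes' argument tolerates parameters converging in $C^\infty$. Your outline follows that route: Weitzenb\"ock bounds with constants uniform in $(g_i,B_i,\eta_i)$, standard compactness when $\|\Phi_i\|_{L^2}$ stays bounded, and a frequency function otherwise. Case (1) is fine.

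\textbf{The step giving (3f) fails in Case (2).} This is not a matter of missing detail. Nothing makes the curvature of $\nabla_{A_iB_i}$ ``preserve $\Psi$'s line''. What $D^+\Psi=0$ and $\mu(\Psi,\Psi)=0$ actually give on $X-Z$ is $\operatorname{Im}\langle\nabla_{A'B}\Psi,\Psi\rangle=0$. To see this, pair $D^+\Psi$ with $\rho(e^k)\Psi$ and use the derivative of $\mu(\Psi,\Psi)=0$. Applying $d$ to this $1$-form shows that $|\Psi|^2F_{A'}$ equals $|\Psi|^2$ times the right-hand side of (3f) plus the term
\[
2\sqrt{-1}\sum_{j<k}\operatorname{Im}\langle\nabla_j\Psi,\nabla_k\Psi\rangle\,e^j\wedge e^k ,
\]
where $\{e^j\}$ is an orthonormal coframe, up to sign and the normalization of the $U(1)$ factor.
\begin{itemize}
\item For $N=2$ this term vanishes, because $\Psi/|\Psi|$ is a multiple of a unitary isomorphism $E^*\to W^+$ (compare Theorem \ref{taubes4}).
\item For $N\geq 3$ it does not vanish. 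Work on flat $\mathbb{C}^2$ with $E=\underline{\mathbb{C}}^3$ and $B$ trivial. Let $\kappa$ be a parallel unit section of $K$, and let $\ell$ be a holomorphic frame of $L$ with $|\ell|^2=(1+|z|^2)^{-1/2}$; take $A$ to be the Chern connection of this metric. Then $\alpha=(1,z,0)\otimes\ell$ and $\beta^*=(0,0,1)\otimes\ell^*\otimes\kappa$ satisfy (3d) and (3e). Yet $F_{A'}=2F_A=\partial\overline{\partial}\log(1+|z|^2)\neq 0$, while the right-hand side of (3f) is zero.
\end{itemize}
Such local models occur in honest limits. Take $E$ trivial of rank $3$ and the holomorphic triples $(\alpha,\lambda\beta^*)$ with $\alpha=(s_0,s_1,0)$ and $\beta^*=(0,0,t)$. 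Here $s_0,s_1\in H^0(\mathscr{L})$ are independent, $0\neq t\in H^0(K_X\otimes\mathscr{L}^{-1})$, and $\lambda\to\infty$. So (3f), as written in Definition \ref{limiting-config}, is false for $N\geq 3$, and no argument can establish it. The correct conclusion keeps the quadratic term, and with it the pointwise bound of Lemma \ref{fabounded} is lost near $Z$.

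\textbf{Two further steps in Case (2) do not follow as stated.}

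First, almost-monotonicity of $N_i$ plus an upper bound gives doubling, hence nowhere-density of $Z$. It does not give the decay $\int_{B_s(x)}|\Psi_i|^2\lesssim s^{4+2\gamma}$. Since $\frac{d}{ds}\log\bigl(s^{-3}\int_{\partial B_s(x)}|\Psi_i|^2\bigr)\approx 2N_i(x,s)/s$, that decay needs a positive lower bound on $N_i(x,s)$, uniform in $i$, near points where $|\Psi_i|$ is small. That lower bound is the real content of the H\"older estimate, and you have not supplied it.

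Second, exponential decay of the component of $\Psi_i$ normal to $\mu^{-1}(0)$ controls $r_i^2\mu(\Psi_i,\Psi_i)$, i.e.\ $F^+_{A_i'}$, on $K$. It says nothing about $F^-_{A_i'}$. No global $L^2$ bound is available to fall back on: $\|F^+_{A_i'}\|_{L^2(X)}$ is only $O(r_i)$, and the curvature really does concentrate along $Z$ (cf.\ Theorem \ref{currents}). The anti-self-dual part needs its own argument before you can gauge-fix and extract weak $L^2_1$ limits of $A_i'$ on $K$.
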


\begin{remark}
    In Taubes' paper \cite{Taubes16}, the above theorem is stated in the weaker form where $(g_i, B_i, \eta_i) = (g, B, \eta)$ for all $i.$ The stronger form that we need here follows immediately from the proofs in that paper \cite{Taubes-talk}.
\end{remark}

We will ned the following lemma in Section 5:

\begin{lemma}\label{fabounded}
    If $(A', \Phi, Z)$ is a limiting configuration on the compact 4-manifold $X,$ then $||F_{A'}||_{C^0(X)}$ is finite.
\end{lemma}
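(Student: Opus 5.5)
The plan is to read the bound straight off condition (f) of Definition \ref{limiting-config}. On $X-Z$ we have
$$F_{A'} = -|\Phi|^{-2}\langle \Phi, (R_{spin}+F_B)\Phi\rangle,$$
where $R_{spin}$ and $F_B$ are smooth curvature tensors on the compact manifold $X$. They act on $W^+\otimes E$ through Clifford multiplication and through the endomorphism action on $E$, tensored with the appropriate 2-form factor. Because $X$ is compact, both have bounded $C^0$ norm, and so does the combined endomorphism-valued 2-form $T:=R_{spin}+F_B$. In particular its operator norm satisfies $\sup_X|T|_{op}=:C<\infty$.

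The main step is a pointwise Cauchy--Schwarz estimate at each $x\in X-Z$, where $\Phi(x)\neq 0$. Strictly we should first check that $\Phi$ is nonvanishing on $X-Z$. If $\Phi$ had isolated zeros there, formula (f) would be undefined at those points, so I read Definition \ref{limiting-config} as implicitly taking $Z=|\Phi|^{-1}(0)$, as in Taubes' setup. Granting this, for each component of the 2-form part we get
$$|\langle\Phi,T\Phi\rangle|\le |\Phi|\,|T\Phi|\le C|\Phi|^2 .$$
Hence $|F_{A'}(x)|\le C'$ for a constant $C'$ depending only on $C$ and the dimension, which is uniform on $X-Z$. The singular factor $|\Phi|^{-2}$ is cancelled exactly by the quadratic dependence on $\Phi$. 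This is why the bound holds even though $|\Phi|\to 0$ near $Z$.

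To finish, interpret $\|F_{A'}\|_{C^0(X)}$ as $\sup_{X-Z}|F_{A'}|$. This is the meaningful quantity, since $A'$ is only defined on $X-Z$ and $Z$ is nowhere dense. The uniform bound then gives finiteness with $\|F_{A'}\|_{C^0}\le C'\,\|R_{spin}+F_B\|_{C^0(X)}$.

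The only real subtlety is the zero-set issue raised above, and I expect it to be the main obstacle, though a mild one. If zeros of $\Phi$ outside $Z$ are allowed, I would instead use the fact that $F_{A'}$ is smooth on $X-Z$, since $A'$ is a connection there. The pointwise bound then holds on the dense open set $\{\Phi\neq0\}$, and continuity extends it to all of $X-Z$.
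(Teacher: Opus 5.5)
Your proposal is correct and matches the paper's argument. The paper likewise reads the pointwise bound $|F_{A'}|\le |R_{spin}+F_B|_{op}$ off condition (f), where the factor $|\Phi|^{-2}$ cancels against the quadratic pairing, and concludes because the right-hand side is continuous on the compact manifold $X$; your remark that (f) implicitly requires $\Phi\neq 0$ on $X-Z$ is a fair reading of the definition and does not change the argument.
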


\begin{proof}
    Theorem \ref{taubes} and Condition (f) in Definition \ref{limiting-config} tell us the pointwise $C^0$-bound $$|F_{A'}|\leq |R_{spin}+F_B|_{op},$$ where $|\cdot|_{op}$ denotes the operator norm of a matrix. This quantity is continuous on the whole manifold $X$, so it is bounded.
\end{proof}

\subsection{Proof of Strong Compactness Theorem}

In this subsection, we will prove restrictions on limiting configurations on K\"ahler surfaces and then use these facts to finish the proof of Theorem \ref{strong_compact}. So, for the remainder of this section, let $(A', \Phi, Z)$ be a fixed limiting configuration for $g$ a K\"ahler metric and $B$ a connection with $F_B^{2,0} = 0.$ As in the previous section, we write $\Phi = (\alpha, \beta)$ under the identification $\mathbb{S}^+\cong \underline{\mathbb{C}}\oplus K_X,$ and use $A$ such that $A^2\otimes \nabla_0' = A'.$

Under this identification of the positive spinor bundle $\mathbb{S}^+$ with $\underline{\mathbb{C}}\oplus K_X,$ its canonical metric spin$^c$ connection as the sum of the product connection with the Chern connection of $K_X$. Hence, as a 2-form-valued matrix, the curvature tensor $R_{spin}$ in the previous section is given as: $$R_{spin} = \begin{pmatrix} 0 & 0 \\ 0 & \rho \\ \end{pmatrix},$$ where $\rho(u, v) = \text{Ric}(iu, v)$ is the Ricci form.

First, we show that $A, \alpha,$ and $\beta^*$ are holomorphic away from $Z$:

\begin{lemma}
    The connection $A$ satisfies $F_A^{2,0} = 0,$ and on $X-Z$ we have the pointwise equations: $$\delbar_{AB}\alpha = 0, ~\delbar^*_{AB}\beta = 0.$$
\end{lemma}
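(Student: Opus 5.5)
The plan is to derive all three conclusions from the structure of a limiting configuration in Definition \ref{limiting-config}, mimicking the proof of Lemma \ref{holomorphic}. The one new ingredient is an integration by parts on the non-compact set $X-Z$, justified by a cutoff argument.

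\emph{Step 1: $F_A^{2,0}=0$.} I would use condition (f), $F_{A'} = -|\Phi|^{-2}\langle \Phi, (R_{spin}+F_B)\Phi\rangle$ on $X-Z$.
\begin{itemize}
\item $R_{spin}=\mathrm{diag}(0,\rho)$ with $\rho$ the Ricci form, which is a real $(1,1)$-form.
\item $F_B$ is $\mathfrak{su}(E)$-valued with $F_B^{2,0}=0$. Since $F_B^{0,2}=-(F_B^{2,0})^*$, also $F_B^{0,2}=0$, so $F_B$ is of type $(1,1)$.
\item Hence $\langle\alpha,F_B\alpha\rangle+\langle\beta,(\rho+F_B)\beta\rangle$ is a $(1,1)$-form, so $F_{A'}^{2,0}=0$ on $X-Z$.
\item Since $F_{A'}=2F_A+F_{\nabla_0'}$ and the Chern curvature $F_{\nabla_0'}$ is $(1,1)$, we get $F_A^{2,0}=0$ (and $F_A^{0,2}=0$) on $X-Z$.
\end{itemize}
I read the statement as a statement on $X-Z$, since $A$ only lives there.

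\emph{Step 2: the algebraic constraint.} Under the splitting $W^+=L\oplus K_X^{-1}L$, the equation $D^+_{A'\otimes B}\Phi=0$ (condition (d)) becomes $\delbar_{AB}\alpha=-\delbar^*_{AB}\beta$ on $X-Z$, as in Equation (\ref{holomsw}). The off-diagonal part of condition (e), $\mu(\Phi,\Phi)=0$, gives $\tr(\alpha\otimes\beta^*)=0$ pointwise.

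Now apply $\delbar_{AB}$ to the Dirac equation. Because $F_A^{0,2}=F_B^{0,2}=0$, this gives $\delbar_{AB}\delbar^*_{AB}\beta=0$ on $X-Z$. Note that we do not even need the quadratic term from Lemma \ref{holomorphic}, since here the curvature term vanishes outright rather than being equal to $\tr(\alpha\otimes\beta^*)$.

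\emph{Step 3: integration by parts, the main obstacle.} On $X-Z$ we cannot integrate by parts directly. I would use cutoffs built from $|\Phi|$.
\begin{itemize}
\item Fix a smooth $f:[0,\infty)\to[0,1]$ with $f=0$ on $[0,1]$ and $f=1$ on $[2,\infty)$, and set $\chi_\varepsilon=f(|\Phi|/\varepsilon)$.
\item By condition (b), $|\Phi|$ is continuous and vanishes on $Z$. Therefore $\operatorname{supp}\chi_\varepsilon\subset\{|\Phi|\ge\varepsilon\}$ is a compact subset of $X-Z$.
\item Pairing $\delbar_{AB}\delbar^*_{AB}\beta=0$ with $\chi_\varepsilon^2\beta$ and integrating gives
$$\int_X\chi_\varepsilon^2|\delbar^*_{AB}\beta|^2 \le 2\int_X \chi_\varepsilon|d\chi_\varepsilon|\,|\beta|\,|\delbar^*_{AB}\beta|.$$
\item On the support of $d\chi_\varepsilon$ we have $|\beta|\le|\Phi|\le 2\varepsilon$ and $|d\chi_\varepsilon|\le C\varepsilon^{-1}|\nabla|\Phi||$. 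By Kato's inequality $|\nabla|\Phi||\le|\nabla_{AB}\Phi|$.
\item So the right side is bounded by $C\int_{\{\varepsilon\le|\Phi|\le2\varepsilon\}}|\nabla_{AB}\Phi|^2$.
\item Condition (c) says $|\nabla_{AB}\Phi|\in L^2(X-Z)$, and the sets $\{\varepsilon\le|\Phi|\le 2\varepsilon\}$ shrink to the empty set as $\varepsilon\to0$. Dominated convergence then sends this bound to $0$.
\item Letting $\varepsilon\to0$ yields $\delbar^*_{AB}\beta=0$ on $\{|\Phi|>0\}$.
\end{itemize}

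On the open set $\{\Phi=0\}\setminus Z$, $\beta$ vanishes identically, so $\delbar^*_{AB}\beta=0$ there too. The remaining points are boundary points of that zero set; by continuity of the smooth section $\delbar^*_{AB}\beta$ on $X-Z$, it vanishes everywhere on $X-Z$. Finally, the Dirac equation gives $\delbar_{AB}\alpha=0$ there as well. The care needed is exactly in this cutoff estimate, to make sure the boundary terms near $Z$ are controlled solely by the $L^2$ bound (c) and the continuity (b).
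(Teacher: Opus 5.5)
Your proposal is correct and follows essentially the same route as the paper. The type argument in condition (f) gives $F_A^{2,0}=0$, applying $\delbar_{AB}$ to the Dirac equation gives $\delbar_{AB}\delbar^*_{AB}\beta=0$, and a cutoff $f(|\Phi|/\varepsilon)$ together with Kato's inequality, the $L^2$ bound (c) and dominated convergence justifies the integration by parts on $X-Z$. Two of your choices tidy up points the paper passes over: restricting the error term to the annulus $\{\varepsilon\le|\Phi|\le 2\varepsilon\}$, and treating zeros of $\Phi$ inside $X-Z$ separately; for the latter, the open set you want is the interior of $\{\Phi=0\}\setminus Z$, not that set itself.
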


\begin{proof}
First, we show that $F_A^{2,0} = 0.$ On $X - Z,$ the connection $A$ has curvature satisfying: $$F_A = - |\Phi|^{-2}\langle\Phi, (R_{spin}+F_B)\Phi\rangle.$$ By the description of $R_{spin}$ in terms of the Ricci form, clearly $R_{spin} = R_{spin}^{1,1},$ so since $F_B^{2,0} = 0$ by assumption, we must have $F_A^{2,0} = 0$, so the $(0,1)$ part of $\nabla_A$ defines a holomorphic structure on $L|_{X-Z}.$

Next, imitating the proof strategies of \cite[Proposition 6.4]{Doan} and \cite[Theorem 3.2]{Bryan-Wentworth}, will we show that $\alpha$ and $\beta^*$ are holomorphic on $X - Z.$ Recall, we have the equations on $X-Z$: \begin{equation}\label{x-z} \delbar_{AB}\alpha + \delbar_{AB}^*\beta = 0, \tr(\alpha\otimes \beta^*) = 0, |\alpha|^2 = |\beta|^2\end{equation}

Let $f:\mathbb{R}\to[0,1]$ be a smooth, increasing function such that $f(x) = 0$ when $x\leq 0$ and $f(x) = 1$ when $x\geq 1.$ For $\epsilon\in(0, 1),$ let $\rho_\epsilon:X\to [0,1]$ be the function: $$\rho_\epsilon(x) = f\left(\frac{|\Phi(x)| - \epsilon}\epsilon\right),$$ so that $\rho_\epsilon(x) = 0$ whenever $|\Phi(x)|\leq \epsilon$ and $\rho_\epsilon(x) = 1$ whenever $|\Phi(x)|\geq 2\epsilon,$ and moreover $\rho_\epsilon$ is differentiable. Applying $\delbar_{AB}$ to the first equation of \ref{x-z} gives us: $$\delbar_{AB}\delbar_{AB}\alpha + \delbar_{AB}\delbar_{AB}^*\beta = 0,$$ which, by $F_A^{2,0} = F_B^{2,0} = 0$ implies $\delbar_{AB}\delbar_{AB}^*\beta = 0.$ As a consequence of this and the pointwise identity: $$\delbar^*_{AB}(\rho_\epsilon\beta) = \{\delbar\rho_\epsilon, \beta\} + \rho_\epsilon\delbar^*_{AB}\beta,$$ where $\{\cdot, \cdot\}$ is some bilinear form with bounded norm, we may integrate by parts: $$0 = \int_X\langle \delbar_{AB}\delbar^*_{AB}\beta, \rho_\epsilon \beta\rangle = \int_X \{\delbar\rho_\epsilon, \beta, \delbar_{AB}^*\beta\} + \int_X \rho_\epsilon{\left|\delbar^*_{AB}\beta\right|}^2,$$ where $\{\cdot, \cdot, \cdot\}$ is some trilinear form with bounded norm. Our task is to show the rightmost term approaches 0 as $\epsilon\to 0$; we do this as follows: $$\int_X \rho_\epsilon{\left|\delbar^*_{AB}\beta\right|}^2\leq\left|\int_X \{\delbar\rho_\epsilon, \beta, \delbar_{AB}^*\beta\}\right|\leq C_1\int_X |\nabla\rho_\epsilon|\cdot|\beta|\cdot|\nabla_{AB}\beta|.$$ Using the following pointwise estimate from Kato's inequality, $$|\nabla\rho_\epsilon|=\epsilon^{-1} \left|f'\left(\frac{|\Phi| - \epsilon}\epsilon\right)\right| |\nabla|\Phi||\leq \epsilon^{-1}||f||_{C^1} |\nabla_{AB}\Phi|,$$ as well as the fact that $\nabla\rho_\epsilon$ is supported on the set $|\Phi(x)|\leq 2\epsilon,$ we have that: $$\int_X |\nabla\rho_\epsilon|\cdot|\beta|\cdot|\nabla_{AB}\beta|\leq C_2\int_{|\Phi(x)|\leq2\epsilon} \epsilon^{-1}2\epsilon |\nabla_{AB}\Phi|^2.$$ Since $|\nabla_{AB}\Phi|^2$ is integrable on $X,$ the dominated convergence theorem tells us that the integral $\int_{|\Phi(x)|\leq2\epsilon} |\nabla_{AB}\Phi|^2$ approaches 0 as $\epsilon\to 0.$ Hence, $$\lim_{\epsilon\to 0}\int_{|\Phi(x)|\geq 2\epsilon}{\left|\delbar^*_{AB}\beta\right|}^2\leq\lim_{\epsilon\to 0}\int_X\rho_\epsilon{\left|\delbar^*_{AB}\beta\right|}^2 = 0,$$ so $\delbar^*_{AB}\beta(x) = 0$ for each $x\in X - Z$. Thus the first equation of \ref{x-z} implies $\delbar_{AB}\alpha(x) = 0$ for each $x\in X - Z$ as well.
\end{proof}

\begin{lemma}\label{Zanalytic}
    If $(A, (\alpha, \beta), Z)$ is a limiting configuration, then $\alpha\otimes\beta^*$ is holomorphic on all of $X$ and $Z$ is a closed subset of an analytic subvariety. In particular, $Z$ has real Hausdorff dimension at most 2.
\end{lemma}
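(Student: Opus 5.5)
The plan is to study the section $s := \alpha\otimes\beta^*$ of $\End\mathscr{E}\otimes K_X$. Here $\beta^*$ is the conjugate-linear dual of $\beta$, i.e.\ a section of $E^*\otimes L^*\otimes K_X$, so $s$ makes sense on all of $X$ even though $A$ is only defined on $X-Z$. The reason is that the factors of $L$ and $L^*$ cancel: $s$ is a section of $\End E\otimes K_X$, a bundle that does not involve $L$ at all. Its smooth structure and metric are fixed, and it carries the holomorphic structure induced by $B$ and the Chern connection of $K_X$. The statement then follows from three steps.

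First, on $X-Z$ the previous lemma gives $\delbar_{AB}\alpha=0$ and $\delbar^*_{AB}\beta=0$. The second condition is equivalent to $\beta^*$ being a holomorphic section of $E^*\otimes L^*\otimes K_X$, using the connection on $L^*$ dual to $A$. The $A$-dependence cancels in the product, so $\delbar_{B\otimes K}s = (\delbar_{AB}\alpha)\otimes\beta^* + \alpha\otimes\delbar(\beta^*) = 0$ on $X-Z$. Moreover $|s|\le|\alpha||\beta|\le|\Phi|^2$, and $|\Phi|$ extends continuously to $X$ with zero value on $Z$. Hence $s$ extends to a continuous section on $X$ that vanishes on $Z$.

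Second, I show that $s$ is holomorphic across $Z$. This is the main obstacle, since a priori $Z$ is only closed and nowhere dense, so a classical removable singularity theorem needs more input. I would argue in the distributional sense. Take a test form $\psi$ and the cutoff $\rho_\epsilon$ from the previous lemma. Then
$$\int\langle s,\delbar^*\psi\rangle = \lim_{\epsilon\to0}\int\rho_\epsilon\langle s,\delbar^*\psi\rangle = \lim_{\epsilon\to 0}\int\{\delbar\rho_\epsilon, s,\psi\},$$
because $\delbar s=0$ on the support of $\rho_\epsilon$. To control the right-hand side I use Kato's inequality as before, which gives $|\nabla\rho_\epsilon|\lesssim\epsilon^{-1}|\nabla_{AB}\Phi|$ supported where $|\Phi|\le 2\epsilon$. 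On that set $|s|\le|\Phi|^2\le4\epsilon^2$. So the error is bounded by
$$C\epsilon\int_{|\Phi|\le2\epsilon}|\nabla_{AB}\Phi| \le C'\epsilon\,\|\nabla_{AB}\Phi\|_{L^2},$$
which tends to zero. Therefore $\delbar s=0$ weakly on $X$. Since $s$ is continuous, elliptic regularity upgrades this to $s$ being a genuine holomorphic section of $\End_0\mathscr{E}\otimes K_X$ on all of $X$. The section is traceless because $\tr(\alpha\otimes\beta^*)=0$.

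Third, I locate $Z$. I will use both $\alpha$ and $\beta$ separately, so $s$ by itself does not suffice. The idea is that $|\alpha|^2=|\beta|^2=\tfrac12|\Phi|^2$ on $X-Z$, and $Z$ is exactly the zero set of $|\Phi|$. If $s\not\equiv0$, then $Z\subseteq s^{-1}(0)$ because $s$ vanishes on $Z$. This zero set is a proper analytic subvariety, of complex dimension at most $1$ since $s\not\equiv 0$ on the connected surface $X$.

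If instead $s\equiv0$, I argue that this case cannot occur. On $X-Z$, at each point one of $\alpha$ and $\beta$ vanishes because $\alpha\otimes\beta^*=0$. Together with $|\alpha|=|\beta|$ this forces $\Phi=0$ on $X-Z$, which contradicts $\|\Phi\|_{L^2}=1$ because $Z$ is nowhere dense. So $s\not\equiv0$ and $Z$ is a closed subset of an analytic curve, hence of real Hausdorff dimension at most $2$. The remaining details to check are the pairing identity for $\beta^*$ and the precise form of $\delbar^*$ on $(0,2)$-forms.
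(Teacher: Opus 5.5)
Your proposal is correct, but it takes a different route from the paper at the key step, namely extending $s=\alpha\otimes\beta^*$ holomorphically across $Z$.

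The paper argues by pluripotential theory. It observes that $\log|s|$ is plurisubharmonic, so $Z\subseteq\{\log|s|=-\infty\}$ is pluripolar. It then cites the fact that a locally bounded holomorphic section extends across a closed pluripolar set (Demailly). This is essentially Radó's theorem, and it uses only that $s$ is continuous, vanishes on $Z$, and is holomorphic off $Z$. The paper also reads off the Hausdorff bound directly from pluripolarity.

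You instead reuse the cutoff $\rho_\epsilon$ from the preceding lemma and prove $\delbar s=0$ weakly on $X$. The boundary term is controlled because the quadratic vanishing $|s|\le|\Phi|^2\le 4\epsilon^2$ beats the $\epsilon^{-1}$ loss from Kato's inequality. Condition (c), $\nabla_{AB}\Phi\in L^2$, then makes the error $O(\epsilon)$.

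What each route buys:
\begin{itemize}
\item Your argument is more elementary and self-contained: integration by parts plus Weyl's lemma. However, it genuinely uses the finite-energy property of limiting configurations.
\item The paper's argument is softer and applies to any continuous section that is holomorphic off its zero set.
\end{itemize}

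Your Step 3 also makes explicit something the paper uses silently when it writes $Z=u^{-1}(-\infty)$. Since $|\alpha|=|\beta|=|\Phi|/\sqrt{2}>0$ on $X-Z$, the section $s$ is nowhere zero there. This gives $s\not\equiv 0$, and in fact $Z=s^{-1}(0)$ exactly, not merely a closed subset of it.
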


\begin{proof}
    $\alpha\otimes \beta^*$ is a holomorphic section of $\End_0\mathscr{E}\otimes K_X$ on $X-Z,$ which limits to 0 on $Z.$ Hence, $\alpha\otimes \beta^*$ extends by 0 on $Z$ to a continuous section of $\End_0\mathscr{E}\otimes K_X$ defined over all of $X$, which is bounded and holomorphic outside $Z$. 

    In fact, $\alpha\otimes \beta^*$ is holomorphic on all of $X$. One way to see this is as follows. Since $\alpha\otimes \beta^*$ is holomorphic outside its zero set, it is clear that the function $u:=\log |\alpha\otimes \beta^*|:X\to\mathbb{R}$ is plurisubharmonic. Since $Z$ is nowhere dense, the function $u$ is not identically equal to $-\infty$ in any open neighborhood of $X,$ hence $Z=u^{-1}(-\infty)$ is a pluripolar set. (This already shows that $Z$ has real Hausdorff codimension $\geq 2,$ since this is a property of pluripolar sets.)
    
    Since $\alpha\otimes\beta^*$ is holomorphic and locally bounded in a neighborhood of a pluripolar set, it extends to a holomorphic function on all of $X$ (see \cite[Corollary 5.25, Chapter 1]{Demailly}.) Moreover, it must extend by 0 over $Z$ by continuity, hence $Z$ is a closed subset of the analytic set $(\alpha\otimes\beta^*)^{-1}(0).$ 
\end{proof}

\begin{remark}
    Taubes shows in \cite[Proposition 1.2]{Taubes16} that when the rank of $E$ is 2, the set $Z$ has real Hausdorff dimension at most 2. The above lemma can be viewed as a generalization of this result to aribtrary rank, provided we assume $X$ is K\"ahler and $B$ is holomorphic.
\end{remark}

\begin{lemma}\label{unobstructed}
    If $B$ is WOYM then there are no limiting configurations with respect to $(X, g, B).$
\end{lemma}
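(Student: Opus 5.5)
Suppose, for contradiction, that $(A', \Phi, Z)$ is a limiting configuration with respect to $(X, g, B)$, where $B$ is WOYM. Write $\Phi = (\alpha, \beta)$ as before. The plan is to show that WOYM forces one of $\alpha, \beta$ to vanish identically. The pointwise equation $|\alpha|^2 = |\beta|^2$ from (\ref{x-z}) then makes both vanish, which contradicts the normalization $\|\Phi\|_{L^2(X-Z)} = 1$.

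\emph{Step 1: $\alpha\otimes\beta^* \equiv 0$.} By Lemma \ref{Zanalytic}, $\alpha\otimes\beta^*$ extends to a holomorphic section of $\End\mathscr{E}\otimes K_X$ over all of $X$. It is traceless on $X-Z$ because $\tr(\alpha\otimes\beta^*) = 0$ there, and it vanishes on $Z$, so it lies in $H^0_X(\End_0\mathscr{E}\otimes K_X)$. Pointwise it is a tensor product of two vectors, so its rank is $\le 1$ everywhere. The WOYM hypothesis then gives $\alpha\otimes\beta^* \equiv 0$.

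\emph{Step 2: from $\alpha\otimes\beta^*\equiv 0$ to $\alpha \equiv 0$ or $\beta\equiv 0$.} At each point of $X - Z$, either $\alpha(x)=0$ or $\beta(x)=0$. Combined with $|\alpha|^2=|\beta|^2$, this gives $|\alpha| = |\beta| = 0$ pointwise on $X-Z$. In fact this already yields $\Phi \equiv 0$ on $X - Z$, with no appeal to unique continuation. That contradicts $\|\Phi\|_{L^2(X-Z)}=1$.

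I should check that $|\alpha|^2=|\beta|^2$ really holds pointwise for a limiting configuration. It follows from condition (e) of Definition \ref{limiting-config}, $\mu(\Phi,\Phi)=0$. In the splitting $W^+ = L\oplus K_X^{-1}L$, the diagonal part of the traceless endomorphism $\tr_E(1-\tr_{W^+})(\Phi\otimes\Phi^*)$ is $\tfrac12(|\alpha|^2-|\beta|^2)\,\mathrm{diag}(1,-1)$, and its off-diagonal part is $\tr(\alpha\otimes\beta^*)$. So $\mu(\Phi,\Phi)=0$ gives both identities in (\ref{x-z}).

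\emph{Main obstacle.} The delicate point is Step 1: the section $\alpha\otimes\beta^*$ must be a genuine global holomorphic section over $X$, not merely one on $X-Z$. Lemma \ref{Zanalytic} supplies exactly this, via pluripolar extension, so with it in hand the argument is short. The only remaining care is that the holomorphic structure on $L$ over $X-Z$ may not extend across $Z$. This causes no trouble, because $\alpha\otimes\beta^*$ takes values in $\End\mathscr{E}\otimes K_X$, in which the $L$ factors cancel; it therefore does not involve $L$ at all.
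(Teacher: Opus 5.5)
Your proof is correct and follows the paper's argument: Lemma \ref{Zanalytic} gives a global holomorphic section $\alpha\otimes\beta^*$, WOYM forces it to vanish, and the condition $\mu(\Phi,\Phi)=0$ (via $|\alpha|^2=|\beta|^2$) then kills $\Phi$, contradicting the normalization. The one difference is minor: the paper first uses unique continuation to get $\alpha\equiv 0$ or $\beta\equiv 0$ and only then applies $|\alpha|^2=|\beta|^2$, whereas you apply it pointwise, which is valid and avoids unique continuation and the connectedness of $X-Z$ that it needs.
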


\begin{proof}
    If there is a limiting configuration $(A, (\alpha, \beta), Z)$, then $\alpha\otimes \beta^*$ is a holomorphic section of $\End_0\mathscr{E}\otimes K_X$ on $X$ by Lemma \ref{Zanalytic}. Since $\alpha\otimes \beta^*$ is clearly rank $\leq 1$ at each point, it must be identically zero since $B$ is WOYM. Hence, at each point $x,$ either $\alpha(x) = 0$ or $\beta(x) = 0.$ Since $\alpha$ and $\beta^*$ are both holomorphic, the unique continuation theorem requires either $\alpha = 0$ or $\beta = 0.$ 
    
    Letting $\gamma:\Omega^+_X\otimes \mathbb{C}\to \mathfrak{sl}(\mathbb{S}^+\otimes L)$ be the Clifford multiplication, we may compute: $$\Lambda\gamma^{-1}(\mu(\Phi, \Phi)) = \frac {\sqrt{-1}}4 \left(|\alpha|^2 - |\beta|^2\right).$$ So, if one of $\alpha$ or $\beta$ is 0, $\mu(\Phi, \Phi) = 0$ forces the other to also be 0. Hence, $\Phi = 0,$ which contradicts $||\Phi||_{L^2(X - Z)} = 1.$
\end{proof}

We may now complete the proof of the main compactness theorem: 

\begin{proof}[Proof of Theorem \ref{strong_compact}]
Suppose by contradiction that there is no such neighborhood $\mathcal{U}.$ Then, there is a sequence $(g_i, B_i, \eta_i)$ converging to $(g, B, \eta)$ such that $\mathcal{M}(X, g_i, B_i, \eta_i)$ is non-compact. By Taubes' Theorem \ref{taubes}, for each $k$ there must exist a solution $(A_k, \Phi_k, g_k, B_k, \eta_k)$ to Equation \ref{msw} with $||\Phi_k||_{L^2} > k.$ But, this now implies there exists a limiting configuration with respect to $(X, g, B),$ which contradicts Lemma \ref{unobstructed}.
\end{proof}

\section{Limiting configurations on K\"ahler surfaces}

We have already seen how rescaled limits of divergent sequences of multi-monopoles converge to limiting configurations. This section will be devoted to strengthening this notion of convergence when the manifold is K\"ahler and relevant parameters holomorphic. In particular, we will show that the convergence can be upgraded to $C^\infty_{\text{loc}}(X - Z)$ and that the curvatures converge to a current which contains the data of the codimension 1 part of the singular set. In the final subsection, we will determine an even stronger conclusion when the rank of $E$ is 2.

In the entirety of this section, let $X$ be a K\"ahler surface with metric $g,$ K\"ahler form $\omega,$ and let $B$ be a connection on $E$ such that $F_B^{2,0} = 0.$ 

%The goal of this section will be a dimension 2 analog of \cite[Theorem 1.5]{Doan}. Namely, when $B$ is not WOYM, the moduli space $\mathcal{M}(X, \mathbf{p})$ is not necessarily compact, and there are two compactifications: one from K\"ahler geometry and one from gauge theory. We will show that in fact these compactifications are homeomorphic to each other.

Most of our arguments are directly analogous to those in \cite{Doan} and \cite{DoanKW}, so we will highlight notable deviations from these.

\subsection{Holonomy Data of Limiting Configuration}

Here, we will show how to recover the first Chern class of the line bundle $L$ from the data of a limiting configuration. Let $(A', \Phi, Z)$ be a limiting configuration for a K\"ahler manifold and $F_B^{2,0}=0,$ so that $Z$ is analytic by Lemma \ref{Zanalytic}.

Let $Z_1,\dots, Z_k$ denote the irreducible codimension 1 components of $Z.$ To each $Z_j$, we associate a holomorphic line bundle $L_j\to X$ with a section $s_j\in H^0_X(L_j)$ that vanishes at $Z_j$; the section is unique up to multiplication by a nonzero complex number. We choose a hermitian metric on each $L_j.$

For each $Z_j,$ we will associate an integer $q_j$ as follows. First, choose a small disk $D\subset X$ which intersects $Z_j$ transversely at a single smooth point, and intersects no other point of $Z$. Orient $D$ so that the intersection is positive. Choose a unitary trivialization $\tau: S|_{D}\to D\times \mathbb{C},$ so that on $D$ we may write $A' = d + a$ for some purely imaginary 1-form $a\in \Omega^1_{D - \{0\}}.$ Thinking of $D$ as the image of a smooth map from the unit disk $\mathbb{D}\subset \mathbb{C}$ to $X,$ we define $D_r$, for each $r\in(0,1],$ to be the image of the disk of radius $r$ centered at the origin in $\mathbb{C}.$ We will define $q_j$ as a limit, which a priori depends on the above data.

\begin{lemma}
    The following limit exists and is moreover an integer: $$q_j(D, \tau) := \lim_{r\to 0^+}\frac{\sqrt{-1}}{2\pi}\int_{\partial D_r} a.$$
\end{lemma}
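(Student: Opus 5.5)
Set $\ell(r) := \frac{\sqrt{-1}}{2\pi}\int_{\partial D_r} a$. The plan is to compare $\ell(r)$ and $\ell(r')$ for $0<r'<r\le 1$ using Stokes' theorem on the annulus $D_r - D_{r'}$. This annulus avoids $Z$, because $D$ meets $Z$ only at the origin. There $a$ is smooth and $da = F_{A'}$ in the trivialization $\tau$, so
$$\ell(r)-\ell(r') = \frac{\sqrt{-1}}{2\pi}\int_{D_r - D_{r'}} F_{A'}.$$
By Lemma \ref{fabounded}, $|F_{A'}|\le C$ on $X-Z$. Since $D$ is a smoothly embedded disk, its area form is comparable to that of $\mathbb{D}$, so the right-hand side is bounded by $C'(r^2 - r'^2)$. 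Hence $\ell(r)$ is Cauchy as $r\to 0^+$ and the limit exists. We also obtain the estimate $|\ell(r) - q_j(D,\tau)|\le C' r^2$, which we will use in the integrality step.

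Integrality comes from the twisted spinor. On $D-\{0\}$, in the trivialization $\tau$, Definition \ref{limiting-config} together with $\mu(\Phi,\Phi)=0$ and the holomorphicity lemmas show the following: wherever $\Phi\neq 0$, it is a rank-one tensor $\Phi = \psi\otimes e$, with $\psi$ a section of $W^+$ (of $L\otimes(\underline{\mathbb{C}}\oplus K_X)$) and $e$ a section of $E$. Moreover, curvature condition (f) says precisely that $A'$ is the connection on $S$ determined by this splitting. Concretely, the line subbundle $\ell_\Phi\subset W^+\otimes E$ spanned by $\Phi$ carries the connection induced from $A'\otimes\nabla\otimes B$, and condition (f) expresses $F_{A'}$ through the curvature of the complementary factor.

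The cleanest route is to use the holomorphic data directly. By Lemma \ref{Zanalytic} and its proof, $\alpha$ is a holomorphic section of $\mathscr{E}\otimes\mathscr{L}$ over $X-Z$, where $\mathscr{L}$ is the holomorphic structure that $A$ induces on $L|_{X-Z}$, and $A' = A^2\otimes\nabla_0'$. Restrict to $D$, which we may take to be a small holomorphic disk transverse to $Z_j$. After passing to a finite cover of the punctured disk if needed, $\alpha|_{D-\{0\}}$ (or $\beta^*$, whichever is not identically zero) is a nonzero holomorphic section of a bundle that is flat-twisted near $0$. The holonomy of $A'$ around $\partial D_r$ is then $\exp(-\int_{\partial D_r} a)$. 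As $r\to 0$, its deviation from the holonomy computed via $\tau$ tends to $0$, by the $O(r^2)$ estimate above. The key claim is therefore that this limiting holonomy is trivial. Granting that, $\frac{\sqrt{-1}}{2\pi}\int_{\partial D_r}a$ converges to an integer.

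To prove the claim, write $\Phi = f\cdot\Phi_0$ on $D-\{0\}$, with $\Phi_0$ a nonvanishing section of a smooth line subbundle that extends over $0$ (after normalization, $|\Phi|$ is Hölder and vanishes at $0$). Then the winding number of the phase of $f$ along $\partial D_r$ is an integer independent of $r$. Moreover, $\nabla_{AB}\Phi\in L^2$ together with condition (f) forces the connection form $a$ to differ from $d\log(f/|f|)$ plus a smooth form by a term whose circle integrals vanish as $r\to 0$.

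The main obstacle is this last identification. One must show that $a$ is asymptotically the pure-gauge form of the phase of $f$, using only the $L^2$ bound on $\nabla\Phi$ and the bounded curvature. A rank-one $\Phi$ on $X-Z$ could a priori carry a $\mathbb{Z}/2$-type sign ambiguity, which would give half-integer holonomy. I would rule this out using the holomorphicity of $\alpha\otimes\beta^*$ across $Z$ from Lemma \ref{Zanalytic}: its vanishing order along $Z_j$ is an integer, and it computes the winding of the phase of $\Phi$ around $Z_j$.
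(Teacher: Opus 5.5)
\textbf{Verdict: the existence half is correct; integrality is not established, and the route you sketch for it rests on a false structural claim.}

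Your existence argument is fine. Stokes on the annuli $D_r\setminus D_{r'}$ plus the $C^0$ bound of Lemma \ref{fabounded} makes $\ell(r)$ Cauchy, and that is all bounded curvature can give.

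The paper's proof tries to get integrality from bounded curvature alone, via a primitive $\sigma$ of $F_{A'}$ on $D-\{0\}$. That cannot work as written. A flat connection on $D-\{0\}$ such as $d+\sqrt{-1}c\,d\theta$ is gauge-trivial only for $c\in\mathbb{Z}$. Also, $\oint_{\partial D_r}\sigma=\int_{D_r}d\sigma$ needs $\sigma$ regular at $0$. So you are right that the spinor has to enter; the gap is in how you make it enter.

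\textbf{The rank-one claim is false.} Condition (e) of Definition \ref{limiting-config} says the traceless part of $\tr_E(\Phi\otimes\Phi^*)\in\End W^+$ vanishes. That is, $|\alpha|=|\beta|$ and $\tr(\alpha\otimes\beta^*)=0$, as in (\ref{x-z}). For $\Phi=\psi\otimes e$ one gets $|e|^2\psi\otimes\psi^*$. Since $W^+$ has rank $2$, its traceless part vanishes only if $\psi=0$. So on $X-Z$ the tensor $\Phi$ has rank exactly $2$; it is $\alpha\otimes\beta^*$ that has rank one. Consequently there is no line subbundle $\Phi_0$, no scalar $f$, and no ``phase of $\Phi$'' whose winding could be $q_j$.

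\textbf{Other problems.} The key claim is left open, as you say. Passing to a $d$-fold cover of $D-\{0\}$ would only yield $d\,q_j\in\mathbb{Z}$. The vanishing order $k$ of $\alpha\otimes\beta^*$ along $Z_j$ does not compute $q_j$; it only fixes $q_j \bmod 2$. The $\mathbb{Z}/2$ sign you worry about is genuinely present for $A$ on $L$, whose limiting holonomy is $(-1)^k$. It is squared away in $A'=A^2\otimes\nabla_0'$.

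\textbf{The missing idea} is to treat $A$ as a Chern connection and use $|\alpha|=|\beta|$.

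Take $D$ holomorphic; the annulus argument of Lemma \ref{q} transfers the result to any transverse disk. On $D-\{0\}$ the bundle $(L,\delbar_A)$ is holomorphically trivial; let $e$ be a holomorphic frame. In the unitary frame $e/|e|$, the connection form of $A$ is $(\partial-\delbar)\log|e|$.

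Write $\alpha=\hat\alpha\otimes e$ and $\beta^*=\hat\gamma\otimes e^{*}$, where $e^*$ is the dual frame. Here $\hat\alpha,\hat\gamma$ are holomorphic on $D-\{0\}$ and take values in $E|_D$ and $(E^*\otimes K_X)|_D$, which are holomorphically trivial over $D$. Then $|\alpha|=|\beta|$ gives $|e|^2=|\hat\gamma|/|\hat\alpha|$.

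By Lemma \ref{Zanalytic}, $\hat\alpha\otimes\hat\gamma=\alpha\otimes\beta^*$ is holomorphic across $0$. After shrinking $D$, it equals $z^k\theta_1\otimes\theta_2$ with $\theta_i$ holomorphic and nonvanishing on $D$. Hence $\hat\alpha=\lambda\theta_1$ and $\hat\gamma=\mu\theta_2$, where $\lambda\mu=z^k$. Moreover $\lambda=z^ne^g$ with $n\in\mathbb{Z}$ and $g$ holomorphic on $D-\{0\}$. This gives
$$\log|e|=\tfrac{k-2n}{2}\log|z|-\mathrm{Re}\,g+(\text{smooth on }D).$$

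Now $(\partial-\delbar)\log|z|=\sqrt{-1}\,d\theta$, and $(\partial-\delbar)\mathrm{Re}\,g=\sqrt{-1}\,d(\mathrm{Im}\,g)$ is exact on $D-\{0\}$. Changing from $e/|e|$ to a frame over $D$ adds an integer. So the limit for $A$ lies in $\frac k2+\mathbb{Z}$. The limit for $A'$ is twice this, because $\nabla_0'$ is smooth across $0$, and is therefore an integer. This is consistent with the factor $\frac12 q_i$ in Theorem \ref{currents}.
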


\begin{proof}
    %We use an argument based on that of \cite[Theorem 1]{Shevchishin}. Since $F_{A'}$ is pointwise bounded, it extends to a current $\widetilde{K}$ on all of $D,$ which is closed by the Harvey-Polking Theorem \cite[Theorem 2.5]{Harvey-Polking}. 
    
    Since the de Rham cohomology $H^2_{dR}(D-\{0\}; \mathbb{R}) = 0,$ and by the Bianchi identity $F_{A'}$ is closed, there must be a 1-form $\sigma$ on $D - \{0\}$ such that $F_{A'} = d\sigma$.     
    
    Therefore, $A' - \sigma$ is flat. So, there exists some unitary gauge transformation $u:D-\{0\}\to S^1$ such that $u(A' - \sigma)$ is the product connection on the trivial bundle. Hence, for each $r\in(0, 1]$: $$0 = \int_{\partial D_r} u(a - \sigma) = \int_{\partial D_r} a -\int_{\partial D_r}\sigma - \int_{\partial D_r}u^{-1}du.$$ Restricting $u$ to $\partial D_r,$ we get a map $u:\partial D_r \to S^1,$ and we have the following pullback formula: $$u^*\left(\frac1{2\pi} d\theta\right) = -\frac{\sqrt{-1}}{2\pi}u^{-1}du.$$ Since $\int_{S^1}d\theta = 2\pi,$ we have: \begin{equation*}
        \begin{split}
            \frac{\sqrt{-1}}{2\pi}\int_{\partial D_r}a-\frac{\sqrt{-1}}{2\pi}\int_{\partial D_r}\sigma &= \frac{\sqrt{-1}}{2\pi}\int_{\partial D_r}u^{-1}du \\
            &= -\int_{\partial D_r} u^*\left(\frac1{2\pi} d\theta\right) \\ &= -\frac1{2\pi}(\deg u)\int_{S^1} d\theta \\ &= -\deg u.
        \end{split}
    \end{equation*} The $C^0$ bound from Lemma \ref{fabounded} now implies $$\lim_{r\to0^+}\int_{\partial D_r}\sigma = \lim_{r\to0^+}\int_{D_r}d\sigma = \lim_{r\to0^+}\int_{D_r}F_{A'} = 0,$$ so we get $q_j(D, \tau) = -\deg u \in \mathbb{Z},$ which proves that the limit exists and is an integer, as desired.
\end{proof} 

\begin{lemma}\label{q}
    The integer $q_j(D, \tau)$ is independent of the choices of $\tau$ and $D.$
\end{lemma}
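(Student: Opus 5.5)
We split the argument into independence of $\tau$ and independence of $D$, and in both cases reuse the structure of the previous proof: up to an error that vanishes as $r\to 0^+$ (by the $C^0$ bound of Lemma \ref{fabounded}), $q_j$ is a winding number. We first fix $D$ and vary $\tau$. Any other unitary trivialization $\tau'$ of $S|_D$ differs from $\tau$ by a smooth map $v:D\to S^1$ defined on the \emph{whole} disk, including the origin, since $S$ is a genuine line bundle on $X$. The connection forms then satisfy $a' = a + v^{-1}dv$. Hence
$$\frac{\sqrt{-1}}{2\pi}\int_{\partial D_r}a' - \frac{\sqrt{-1}}{2\pi}\int_{\partial D_r}a = -\deg\left(v|_{\partial D_r}\right).$$
This degree is zero because $v|_{\partial D_r}$ extends over $D_r$. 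Taking $r\to0^+$ gives $q_j(D,\tau') = q_j(D,\tau)$.

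For independence of $D$, we use the fact that $Z_j$ is an irreducible analytic curve (Lemma \ref{Zanalytic}). The set $Z_j^{reg}$ of smooth points of $Z_j$ lying on no other component of $Z$ is connected, since it is the complement of a finite set in an irreducible curve. First we compare two transverse disks $D, D'$ meeting $Z_j$ at points $p,p'\in Z_j^{reg}$. Choose a path in $Z_j^{reg}$ from $p$ to $p'$, and cover it by finitely many coordinate charts $U\cong \Delta\times\Delta$ in which $Z\cap U = \{0\}\times\Delta$. Within one chart, we compare the disk $\Delta\times\{w\}$ with $\Delta\times\{w'\}$. Their small boundary circles $\partial\Delta_r\times\{w\}$ and $\partial\Delta_r\times\{w'\}$ cobound the annulus $\partial\Delta_r\times\gamma$, where $\gamma$ is a path from $w$ to $w'$. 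This annulus avoids $Z$, and we use a trivialization of $S|_U$, which is legitimate since $U$ is contractible. By Stokes' theorem, the difference of the two holonomy integrals equals $\frac{\sqrt{-1}}{2\pi}\int_{\partial\Delta_r\times\gamma}F_{A'}$. Since the annulus has area $O(r)$ and $|F_{A'}|$ is bounded by Lemma \ref{fabounded}, this tends to $0$. Chaining finitely many charts handles moving the intersection point.

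It remains to compare an arbitrary transverse disk $D$ through $p$ with the standard slice $\Delta\times\{0\}$. Two oriented disks through $p$ that are positively transverse to $Z_j$ can be joined by a smooth family $D_t$ of disks that all stay positively transverse at $p$: the transverse complements form a connected space, and the orientation condition is preserved along the family. We shrink the family so that $D_t\cap Z = \{p\}$ for all $t$. For fixed small $r$, the circles $\partial (D_t)_r$ sweep out an annulus in $U\setminus Z$. The same Stokes argument bounds the change in the integral by $C\cdot\mathrm{Area}$, and this area tends to $0$ as $r\to0$, again because $F_{A'}$ is pointwise bounded. A reparametrization of $D$ is handled the same way, since the degree of an orientation-preserving map of circles does not change.

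The main obstacle is this last step. We need the family of disks to be built so that the swept annuli avoid $Z$ and have area $O(r)$ uniformly in $t$. We also need to use the integrality from the previous lemma: an integer-valued limit that changes by an amount tending to zero must in fact stay constant. For this we would first straighten $Z_j$ near $p$ by holomorphic coordinates, which is possible since $p$ is a smooth point. In these coordinates each $D_t$ is a graph over the transverse direction, and the area estimate becomes elementary.
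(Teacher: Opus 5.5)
Your proposal is correct and follows essentially the paper's route: a change of trivialization contributes zero (the paper writes $\tau'=e^{\sqrt{-1}f}\tau$ and uses $\int_{\partial D_r}df=0$, which is your degree-zero winding), and two disks are compared by Stokes' theorem on a thin annulus avoiding $Z$, whose contribution vanishes as $r\to0^+$ by the $C^0$ curvature bound of Lemma \ref{fabounded}. Your two-step construction, first chaining product charts along $Z_j^{reg}$ and then deforming through positively transverse disks at a fixed point, simply makes explicit the cobordism $W_r$ and annulus $V_r$ whose existence the paper asserts; the appeal to integrality in your last paragraph is unnecessary, since both limits already exist by the previous lemma and their difference is the limit of quantities tending to zero.
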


\begin{proof}
    For a different choice of trivialization $\tau' = e^{\sqrt{-1}f}\tau,$ the connection matrix of $\xi$ is now given by $a-\sqrt{-1}df$, so by Stokes' theorem: $$q_j(D, \tau') = \lim_{r\to 0^+}\frac{\sqrt{-1}}{2\pi}\int_{\partial D_r} a-\sqrt{-1}df = \lim_{r\to 0^+}\frac{\sqrt{-1}}{2\pi}\int_{\partial D_r} a=q_j(D, \tau)$$ Let $D'$ be a different such disk. Then, since the subset of $Z_j$ consisting of smooth points that intersect no other component of $Z$ is connected, for each $r\in(0, 1]$ there exists a cobordism $W_r$ from $D_r$ to $D'_r$ such that $\partial W_r$ intersects $Z$ only in the intersections of $D_r\cap Z_j$ and $D'_r\cap Z_j.$ There is an annulus $V_r\subset \partial W_r$ which is itself a cobordism from $\partial D_r$ to $\partial D'_r,$ and $V_r\cap Z = \emptyset.$
    
    Choose a unitary trivialization $\tau$ of $M$ on $W_1$, so that $A' = d + a$ on $W_1.$ By Stokes' theorem: $$\int_{\partial D'_r} a -\int_{\partial D_r} a = \int_{\partial V_r}a = \int_{V_r} da = \int_{V_r} F_{A'}.$$ 
    
    Hence, Lemma \ref{fabounded} tells us: $$\lim_{r\to0^+}\left|\int_{V_r} F_{A'}\right|\leq \lim_{r\to0^+} \text{area}(V_r)\cdot||F_{A'}||_{C^0(X)} = 0,$$ and we conclude that $q_j(D, \tau) = q_j(D', \tau).$
\end{proof}

By Lemma \ref{q}, we may define $q_j$ to be the integer $q_j(D, \tau)$ for any choice of $D, \tau.$ \begin{comment}We will need the following higher-dimensional analog of \cite[Lemma 5.19]{Doan}:

\begin{lemma}\label{519}
    Let $D$ be as above, and suppose that on some open neighborhood $U\supset D$ we have a holomorphic section $\varphi\in H^0(U - Z; M_\xi)$ satisfying $|\varphi| = 1.$ Then, $|\varphi|_{\partial D}:\partial D\to S^1$ defines a map of the circle to itself whose degree equals $q_j.$
\end{lemma}

\begin{proof}
    By homotoping $D$ if necessary, it may be assumed to be the image of a holomorphic embedding of the unit disk $\mathbb{D}$, and by pulling back $M_\xi$ and $\varphi$ via the holomorphic embeddings $\mathbb{D}\to U$ and $\mathbb{D} -\{0\} \to U - Z$ we reduce this lemma to the case of \cite[Lemma 5.19]{Doan}.
\end{proof}
\end{comment}

\subsection{$C^\infty_{\text{loc}}$ convergence}

In this part, we will show that for a fixed K\"ahler parameter $(g, B, 2r\omega),$ a sequence of multi-monopoles converging to a limiting configuration in fact converge in a stronger topology than that given by Theorem \ref{taubes}. The below theorem is a direct adaptation of Doan's \cite[Theorem 2.2]{DoanKW}, an analogous theorem taking place on a Riemann surface, to our setting:

\begin{remark}
    In this subsection and the next, we will occasionally use the letter $\gamma$ to denote $\beta^*$ for notational convenience. We will also rewrite the equations so that the section $\Phi$ always has $||\Phi||_{L^2} = 1$ and to correct for the rescaling introduce a new parameter $\epsilon,$ defined below:
\end{remark}

\begin{theorem}\label{c-infty}
    Let $(X, g, \omega)$ be a K\"ahler surface, $E\to X$ a $SU(N)$-bundle with compatible connection $B$ satisfying $F_B^{2,0} = 0,$ and $L\to X$ a hermitian line bundle. Suppose that for each $j\in\mathbb{N},$ we have a unitary connection $A_j\in\mathscr{A}(L)$, a section $\alpha_j\in \Gamma_X(E\otimes L)$ a section $\gamma_j\in \Gamma_X(E^*\otimes L^*\otimes K_X)$, and a number $\epsilon_j\in(0,\infty)$ satisfying: \begin{equation}\label{rescaledholomsw}
        \begin{cases}
            \delbar_{A_jB}\alpha_j = 0 \\
            \delbar_{A_jB}\gamma_j = 0 \\
            F_{A_j}^{2,0} = 0 \\
            \epsilon_j\left(-\sqrt{-1}\Lambda F_{A_j} +\frac12\sqrt{-1}\Lambda F_{\nabla_0}+r \right) = \frac14\left(|\alpha_j|^2-|\gamma_j|^2\right) \\
            ||\alpha_j||_{L^2}^2+||\gamma_j||_{L^2}^2 = 1 \\
            \lim_{j\to\infty}\epsilon_j = 0 \\
        \end{cases}
    \end{equation}

    Then, there exists a limiting configuration $(A, Z, (\alpha, \gamma^*))$ and unitary gauge transformations $u_j\in\mathcal{G}$ such that a subsequence of $(u_j\cdot A_j, u_j\alpha_j, u_j^{-1}\gamma_j^*)$ converges to $(A, \alpha, \gamma^*)$ in the $C^\infty$ topology on compact subsets of $X - Z.$
\end{theorem}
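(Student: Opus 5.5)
We reduce to Theorem \ref{taubes} and then bootstrap using the holomorphic structure. First, undo the rescaling: set $\Phi_j = \epsilon_j^{-1/2}(\alpha_j,\gamma_j^*)$. Then the triples $(A_j,\Phi_j)$ solve Equation (\ref{holomsw}) for the fixed parameter $(g,B,2r\omega)$, with $\beta_j = \epsilon_j^{-1/2}\gamma_j^*$, and $\|\Phi_j\|_{L^2} = \epsilon_j^{-1/2}\to\infty$. By part (2) of Theorem \ref{taubes}, after unitary gauge transformations $u_j$ and passing to a subsequence, $(u_jA_j, u_j\alpha_j, u_j^{-1}\gamma_j^*)$ converges to a limiting configuration $(A,(\alpha,\gamma^*),Z)$ in the sense of Definition \ref{convergence}. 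Concretely:
\begin{itemize}
\item $A_j\to A$ weakly in $L^2_1$ on compact subsets of $X-Z$;
\item the normalized spinors converge in $L^2_2$ on compact subsets of $X-Z$;
\item $|\Phi_j|/\|\Phi_j\|$ converges in $C^0$ on all of $X$.
\end{itemize}
By the lemmas of Section 4.3, $\alpha$ and $\gamma$ are holomorphic on $X-Z$ and $Z$ is analytic. It remains to upgrade the convergence to $C^\infty_{\mathrm{loc}}(X-Z)$.

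Fix a compact set $K\subset X-Z$. By $C^0$ convergence of $|\Phi_j|$ and compactness of $K$, there are $c>0$ and $j_0$ with $|\alpha_j|^2+|\gamma_j|^2\geq c$ on a neighborhood $U\supset K$ for $j\ge j_0$. The key step is the curvature bound: as in \cite[Theorem 2.2]{DoanKW}, we want uniform $C^k(U)$ bounds on $F_{A_j}$. Since $F_{A_j}^{2,0}=0$, we have $F_{A_j}=F_{A_j}^{1,1}$, and its $\Lambda$-component is $\epsilon_j^{-1}\cdot\frac14(|\alpha_j|^2-|\gamma_j|^2)$ plus bounded terms. This quantity is the dangerous one because of the factor $\epsilon_j^{-1}$. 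To control it, we derive the Weitzenb\"ock identity for $\Delta|\alpha_j|^2$ and $\Delta|\gamma_j|^2$ using holomorphicity, $\partial\bar\partial$ of the norms, and the Kähler identities. This yields an equation of the form
\[
\Delta w_j + \epsilon_j^{-1}\tfrac14(|\alpha_j|^2+|\gamma_j|^2)\,w_j = (\text{bounded terms}),
\qquad w_j = \epsilon_j^{-1}\bigl(|\alpha_j|^2-|\gamma_j|^2\bigr)/4 .
\]
Because the coefficient $\epsilon_j^{-1}(|\alpha_j|^2+|\gamma_j|^2)\ge c\epsilon_j^{-1}$ is large on $U$, a maximum principle applied with a cutoff function gives $\sup_K|w_j|\le C$, and in fact the bound improves as $\epsilon_j\to0$. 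This mirrors Doan's argument on Riemann surfaces, the extra $(1,1)$ directions being handled through $\Lambda$ together with the Bianchi identity. I expect this step to be the main obstacle: one must check carefully that the cross terms $|\partial_{A_jB}\alpha_j|^2$ in the Weitzenb\"ock formula have a favorable sign or are controlled by the $L^2$ gradient bounds, and that $\Lambda F_{A_j}$ together with $F^{2,0}_{A_j}=0$ controls the full curvature locally. For the latter, $d F_{A_j}=0$ and $d^*F_{A_j}$ is determined by $d\Lambda F_{A_j}$ on a Kähler surface, so elliptic regularity for $d+d^*$ gives control from $\Lambda F_{A_j}$ modulo harmonic parts. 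Differentiating the equation then gives higher-derivative bounds on $w_j$ inductively.

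With uniform $C^k$ curvature bounds on $U$, Uhlenbeck's theorem (which is abelian here, so it amounts to Coulomb gauge on small balls patched together) yields gauges in which $A_j$ is bounded in $C^k$. The sections $\alpha_j,\gamma_j$ satisfy the elliptic equations $\bar\partial_{A_jB}\alpha_j=0$ and $\bar\partial_{A_jB}\gamma_j=0$, which are elliptic modulo $\bar\partial^*$ components. More cleanly, $\bar\partial^*_{A_jB}\bar\partial_{A_jB}$ is a Laplace-type operator with $C^k$-bounded coefficients. Together with the $L^2$ normalization, this gives $C^k$ bounds on $\alpha_j$ and $\gamma_j$. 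By Arzelà–Ascoli and a diagonal argument over an exhaustion of $X-Z$, a subsequence converges in $C^\infty_{\mathrm{loc}}(X-Z)$. The limit must coincide with the limiting configuration supplied by Theorem \ref{taubes}, since $C^\infty_{\mathrm{loc}}$ convergence implies the weaker convergences there. Finally, the local gauges are absorbed into the $u_j$ by the standard patching argument for $U(1)$ gauge transformations, as in \cite{DoanKW}.
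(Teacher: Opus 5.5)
\textbf{There is a genuine gap in the step you flag as the main obstacle, and it does not close in the form you propose.} Your opening reduction to Theorem \ref{taubes} matches the paper's. That reduction silently requires $\tr(\alpha_j\otimes\gamma_j)=0$, since otherwise the rescaled triples do not solve (\ref{holomsw}); this is not among the listed hypotheses, and the paper relies on it implicitly too.

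If you actually do the Weitzenb\"ock computation for the holomorphic sections $\alpha_j,\gamma_j$, with $w_j=\frac{1}{4\epsilon_j}(|\alpha_j|^2-|\gamma_j|^2)$ and $\Delta=d^*d$, you get (up to normalization constants)
\[
\Delta w_j+\frac{|\alpha_j|^2+|\gamma_j|^2}{2\epsilon_j}\,w_j=\frac{1}{2\epsilon_j}\bigl(|\nabla_{A_jB}\gamma_j|^2-|\nabla_{A_jB}\alpha_j|^2\bigr)+\epsilon_j^{-1}\,O\bigl(|\alpha_j|^2+|\gamma_j|^2\bigr).
\]
So the right-hand side is not bounded. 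It is of size $\epsilon_j^{-1}$ and contains the spinor gradients. After dividing by the large coefficient, the maximum principle only gives
\[
|w_j|\le C\bigl(1+\sup_U(|\nabla_{A_jB}\alpha_j|^2+|\nabla_{A_jB}\gamma_j|^2)\bigr).
\]
The two gradient terms enter with opposite signs, so neither the upper nor the lower bound comes for free. A uniform pointwise bound on these gradients is exactly what your curvature estimate was supposed to produce. The $L^2_2$ control from Theorem \ref{taubes} is not pointwise, so the argument is circular as stated.

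Your remark that the bound improves as $\epsilon_j\to0$ is also false. On $X-Z$, $w_j\to-\sqrt{-1}\Lambda F_A+\frac12\sqrt{-1}\Lambda F_{\nabla_0}+r$. This is nonzero for large $r$, because $F_A$ is bounded independently of $r$ by Lemma \ref{fabounded}.

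\textbf{The missing idea is the one the paper imports from \cite{DoanKW}: do not estimate in unitary gauge at all.} By Doan's Step (1), there are real functions $f_j$ such that the complex-gauge-transformed triples $e^{f_j/2}\cdot(A_j,\alpha_j,\gamma_j^*)$ converge in $C^\infty$ on all of $X$. Because $A_j$ is a $U(1)$-connection, all remaining freedom is the single scalar $f_j$, with $|\alpha_j|^2=e^{-f_j}|\widetilde{\alpha_j}|^2$ and $|\gamma_j|^2=e^{f_j}|\widetilde{\gamma_j}|^2$. The fourth equation then becomes the scalar Kazdan--Warner equation
\[
\epsilon_j\Delta f_j+|\widetilde{\gamma_j}|^2e^{f_j}-|\widetilde{\alpha_j}|^2e^{-f_j}+w_j=0,
\]
with smooth convergent coefficients and, crucially, no first-order terms. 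Doan's Steps (3)--(4) then give $C^\infty_{\mathrm{loc}}$ convergence of $f_j$ off the zeros of $\widetilde\alpha$ and $\widetilde\gamma$. The needed nondegeneracy, $\liminf_j\bigl\||\alpha_j|\,|\gamma_j|\bigr\|_{L^\infty(X)}>0$, comes from the $L^2$ convergence in Theorem \ref{taubes}. Curvature bounds are then automatic from $F_{A_j}=F_{\widetilde{A_j}}+\partial\overline\partial f_j$, and one checks that the new singular set agrees with Taubes'.

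You cannot mimic this directly in unitary gauge by working with $\log|\alpha_j|^2-\log|\gamma_j|^2$ when $N\geq 2$. Each logarithm satisfies only a one-sided differential inequality, and the two point in the same direction, so their difference satisfies no gradient-free equation.
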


\begin{proof}
    Throughout this proof, we will continue to denote all subsequences taken with the same lettering to avoid multiple lower indices.
    
    By Theorem \ref{taubes}, there exist unitary gauge transformations $u_j\in\mathcal{G}$ such that a subsequence of $(u_j\cdot A_j, u_j\alpha_j, u_j^{-1}\gamma_j^*)$ converges to a limiting configuration $(A_0, \alpha_0, \gamma^*_0)$ in the weak $L^2_1$ topology on compact subsets of $X - Z_0.$

    By \cite[Step (1), proof of Theorem 2.2]{DoanKW}, there exist smooth real-valued functions $f_j:X\to \mathbb{R}$ and unitary gauge transformations $v_j$ such that a subsequence of $$(\widetilde{A_j}, \widetilde{\alpha_j}, \widetilde{\gamma_j^{*}}):= e^{f_j/2}v_j\cdot(A_j, \alpha_j, \gamma_j^*)$$ converges on all of $X$ to some triple \emph{which is not necessarily a solution to the 4th or 5th equation in the statement of the theorem} $(\widetilde A, \widetilde \alpha, \widetilde{\gamma_j^{*}})$ in $C^\infty,$ where we view $e^{f_j}\in\mathcal{G}_\mathbb{C}.$ Moreover, $\alpha'$ and $\gamma_j'$ are both nonzero. Without loss of generality we assume all $v_j = 1.$ Let $Z$ denote the union of the zero sets of $\widetilde \alpha$ and $\widetilde{\gamma}$. Since $\widetilde \alpha$ and $\widetilde{\gamma}$ are holomorphic and nonzero, we must have that $Z$ is an analytic set which is nowhere dense.

    Steps (3) and (4) of Doan's proof of \cite[Theorem 2.2]{DoanKW} apply exactly to our setting, and they tell us that provided it is not true that: $$\liminf_{j\to\infty} ||\left( |\alpha_j|\cdot|\gamma_j|\right)||_{L^\infty(X)}=0,$$ then up to taking a subsequence, $f_j$ converges to some smooth function $f:X - Z\to\mathbb{R}$ in $C^\infty_{\text{loc}}(X - Z).$  But this assumption is indeed satisfied, since, on any compact set $K\subset X - Z_0$ with non-empty interior, $\alpha$ and $\gamma$ vanish nowhere. By $L^2_{\text{loc}}$ convergence we have that: $$0<\int_K|\alpha|^2\cdot|\gamma|^2 =\lim_{j\to \infty} \int_K|\alpha_j|^2\cdot|\gamma_j|^2 \leq \vol(K) \liminf_{j\to\infty} ||\left( |\alpha_j|\cdot|\gamma_j|\right)||_{L^\infty(K)}^2.$$ Hence, some subsequence of $(A, \alpha_j, \gamma_j^*)$ converges to $(A, \alpha, \gamma_j^*) := e^{-f/2}\cdot(\widetilde{A_j}, \widetilde{\alpha_j}, \widetilde{\gamma_j^{*}})$ in $C^\infty_{\text{loc}}(X - Z).$ We'd like to show that $(A, Z, (\alpha, \gamma^*))$ is a limiting configuration and that this convergence is also in the sense of Definition \ref{convergence}. All the properties of these definitions on $X - Z$ follow immediately from the $C^\infty$ convergence on compact sets.

    Note $|\alpha_j| = |u_j\alpha_j|,$ which approaches $|\alpha_0|$ in $C^0(X)$ as $j\to \infty.$ Hence, $|\alpha_0| = |\alpha|$ on $X - Z_0 \cup Z.$ By the nowhere density of $Z_0$ and $Z,$ we must have that $|\alpha|$ extends to a $C^0$ function on all of $X$ and $|\alpha_j|\to |\alpha|$ in $C^0(X)$ as $j\to \infty$. (Moreover this shows $Z = Z_0.$) The analogous statement for $|\gamma|$ and for $|\nabla_{A_j}\Phi_j|$ on all of $X$ follow in exactly the same way.
\end{proof}

\subsection{Convergence of Currents}

We will show another interpretation of the singular set of a limiting configuration, which is that the curvatures of an unbounded sequence of multi-monopoles converge to a current, and the complex codimension 1 part of the singular set is precisely the singular part of the current.

For the decomposition of $Z$ as above, each $Z_j$ defines a cohomology class $[Z_j]\in H^2(X; \mathbb{Z})$, which equals the Poincar\'e dual of $Z_j$ if $Z_j$ is smoothly embedded (see for instance \cite[p. 61]{GH}).

We first require an \emph{a priori} $L^1$ bound on solutions to the same modified Kazdan-Warner equation of Bryan and Wentworth that we first saw in Section 3.

\begin{definition}
    Let $(X, g)$ be a smooth $n$-dimensional manifold, with volume measure denoted by $\Omega$. Denote by $\omega_k$ the volume of the unit ball in $\mathbb{R}^k.$ For each integer $m\in[0,n],$ the $m$-dimensional \emph{upper Minkowski content} of a subset $Z\subset X$ is defined as: $$\mathcal{M}^{*,m}(Z) := \limsup_{r\to 0^+} \frac{\Omega(\left\{x\in X:\text{dist}(x, Z)\leq r\right\}}{\omega_{n-m}r^{n-m}}.$$
\end{definition}

The $m$-dimensional upper Minkowski content of a compact rectifiable set is equal to its $m$-dimensional Hausdorff measure (see \cite[Theorem 3.2.39]{Federer}.) The following lemma about upper Minkowski content follows immediately from this fact together with the observation that complex analytic subvarieties are rectifiable:

\begin{lemma}\label{minkowski}
    A non-trivial complex analytic subvariety $Z$ of a connected compact complex manifold $X$ has zero $(\dim_\mathbb{R} X - 1)$-dimensional upper Minkowski content.
\end{lemma}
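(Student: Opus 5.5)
The plan is to reduce the statement to a local estimate near $Z$, using compactness of $X$ and the local structure of analytic sets. Write $n = \dim_\mathbb{R} X = 2m$, so the claim is
$$\Omega(\{x : \text{dist}(x,Z)\le r\}) = o(r).$$
Since $Z$ is a non-trivial (proper) analytic subvariety and $X$ is connected, $Z$ has complex dimension at most $m-1$, hence real dimension at most $n-2$. I will therefore aim for the stronger bound
$$\Omega(\{x : \text{dist}(x,Z)\le r\}) \le C r^2$$
for small $r$. This immediately gives that the $(n-1)$-dimensional upper Minkowski content, whose defining ratio carries the factor $r^{-1}$, vanishes.

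First, by compactness, cover $Z$ by finitely many coordinate charts, each biLipschitz to a Euclidean ball. Since the biLipschitz constants are uniform, both distances and volumes change by at most bounded factors. This reduces the problem to a compact analytic set in $\mathbb{C}^m$. Then stratify $Z$ into finitely many locally closed complex submanifolds (smooth points of $Z$, smooth points of its singular locus, and so on). Each stratum has real dimension at most $n-2$ and is a bounded semi-analytic set, so $Z$ is a compact countably $(n-2)$-rectifiable set, in fact a finite union of Lipschitz images of compact subsets of $\mathbb{R}^{n-2}$.

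The cleanest route is then to invoke the cited Federer theorem in dimension $n-2$. The $(n-2)$-dimensional Minkowski content of $Z$ equals $\mathcal{H}^{n-2}(Z)$, and this is finite because $Z$ is compact analytic: by Lelong/Wirtinger, its local volume is bounded. Finiteness of this content means
$$\Omega(\{\text{dist}(\cdot,Z)\le r\}) \le C\,\omega_2 r^2$$
for all small $r$. Dividing by $\omega_1 r$ and letting $r\to 0$ gives $\mathcal{M}^{*,n-1}(Z)=0$.

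The main obstacle is justifying that Federer's theorem applies, since it requires closed $(n-2)$-rectifiable sets, meaning Lipschitz images of compact sets. If I prefer to avoid that machinery, the fallback is a direct covering argument. For small $r$, cover $Z$ by $N(r)\le C r^{-(n-2)}$ balls of radius $r$, and enlarge each to a ball of radius $2r$ so that the doubled balls contain the $r$-neighborhood. This gives a volume bound $\le N(r)\,C r^n \le C' r^2$. The count $N(r)$ would come from local finiteness of the $(n-2)$-dimensional volume of analytic sets, which is the Lelong monotonicity bound $\mathcal{H}^{n-2}(Z\cap B(x,\rho))\ge c\rho^{n-2}$ for $x\in Z$. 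Combined with a Vitali disjoint subfamily, this yields $N(r)\le \mathcal{H}^{n-2}(Z)/(c r^{n-2})$. I expect this estimate to be the step needing the most care; it is standard, but uniformity over the charts must be checked.
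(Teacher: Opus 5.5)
Your main route is correct and is essentially the paper's proof, which likewise just combines Federer's Theorem 3.2.39 with the rectifiability of analytic sets. The only difference is that the paper applies it directly in dimension $\dim_\mathbb{R} X-1$, where $\mathcal{H}^{n-1}(Z)=0$, whereas you apply it in dimension $n-2$ and get the stronger bound $O(r^2)$; once $Z$ is a finite union of Lipschitz images of bounded subsets of $\mathbb{R}^{n-2}$, finiteness of $\mathcal{H}^{n-2}(Z)$ is automatic, so Lelong/Wirtinger is not needed there.

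If you instead use the covering fallback, note that the density bound $\mathcal{H}^{n-2}(Z\cap B(x,\rho))\ge c\rho^{n-2}$ fails at points of lower-dimensional components (e.g.\ isolated points). So run that argument separately on each pure $d$-dimensional part, with $\mathcal{H}^{2d}$ in place of $\mathcal{H}^{n-2}$; this gives volume $O(r^{n-2d})$, which is still $O(r^2)$.
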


\begin{lemma}\label{l1}
    Let $(X, g)$ be a compact Riemannian $n$-dimensional manifold (possibly with boundary), with H\"older functions $A, B\in C^{0, \gamma}(X)$ for some exponent $\gamma>0.$ Suppose also that the set $(A^{-1}(0)\cup B^{-1}(0))$ has $(n-1)$-dimensional upper Minkowski content equal to 0. Let $w:X\to\mathbb{R}$ be any smooth function on $X$, and let $\epsilon_0>0$. Then, there exists a constant $M$ depending only on $\epsilon_0, \gamma, \vol(X)$ and bounds for $||w||_{C^0(X)}, ||A||_{C^{0,\gamma}(X)},$ and $||B||_{C^{0,\gamma}(X)},$ with the following property:
    
    For any $\epsilon \in (0, \epsilon_0]$ and smooth function $f:X\to \mathbb{R}$ satisfying the equation: $$\epsilon\Delta f+ Ae^f-Be^{-f}+w =0,$$ we have the estimate: $$||f||_{L^1(X)}\leq M.$$%with holomorphic Hermitian vector bundles $E_1, E_2\to X$, and holomorphic sections $\alpha\in H^0(E_1, X), \beta\in H^0(E_2, X)$ both not identically zero.
\end{lemma}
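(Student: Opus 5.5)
The plan is to bound the positive and negative parts of $f$ separately, using the structure of the equation rather than a pointwise maximum principle. A pointwise maximum principle is useless here because $A$ and $B$ may vanish. I assume throughout, as in the intended application $A = \frac14|\alpha|^2$, $B = \frac14|\gamma|^2$, that $A, B \geq 0$. I also assume $\Delta$ is the positive Hodge Laplacian, so that $\int_X \varphi(f)\,\Delta f = \int_X \varphi'(f)|\nabla f|^2 \geq 0$ for every nondecreasing Lipschitz $\varphi$. By the symmetry $f\mapsto -f$, $A\leftrightarrow B$, $w\mapsto -w$, it suffices to bound $\|f^+\|_{L^1}$.

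\textbf{Step 1 (level-set inequality).} Multiply the equation by $\varphi_\delta(f)$, where $\varphi_\delta$ is a smoothing of $\mathbf 1_{\{f>t\}}$, integrate, and let $\delta\to0$. The Laplacian term is nonnegative and has a factor $\epsilon>0$ in front, so it can be dropped. Dropping it gives, for every $t\geq 0$,
$$\int_{\{f>t\}} A e^f \leq \int_{\{f>t\}} \left(Be^{-f}+|w|\right) \leq \left(\|B\|_{C^0}+\|w\|_{C^0}\right)\vol(\{f>t\}).$$
Hence $e^t\int_{\{f>t\}}A \leq C_1 \vol(\{f>t\})$, where $C_1$ depends only on the allowed data and in particular not on $\epsilon$. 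This is the key point that makes $M$ uniform in $\epsilon\in(0,\epsilon_0]$. The case $t=-\infty$ (integrating the equation directly) also gives $\int_X (Ae^f - Be^{-f}) = -\int_X w$, which will be used for the coupling between $f^+$ and $f^-$ if needed.

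\textbf{Step 2 (from $A$-weighted measure to volume).} Write $N_r=\{x:\operatorname{dist}(x,A^{-1}(0)\cup B^{-1}(0))\leq r\}$. By the vanishing $(n-1)$-dimensional upper Minkowski content, $\vol(N_r)\leq \eta(r)\,r$ with $\eta(r)\to0$. Then
$$\vol(\{f>t\}) \leq \vol(N_r) + \vol(\{f>t\}\setminus N_r) \leq \eta(r)r + \frac{C_1 e^{-t}}{\inf_{X\setminus N_r}A}\,\vol(X).$$
What is needed is a quantitative lower bound $\inf_{X\setminus N_r}A \geq c\,r^{N}$ for some power $N$. With such a bound, choose $r = r(t) = e^{-t/(2N)}$. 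Then $\vol(\{f>t\})$ decays at an integrable rate in $t$, and
$$\|f^+\|_{L^1}=\int_0^\infty \vol(\{f>t\})\,dt \leq M.$$

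\textbf{Main obstacle.} The main obstacle is exactly that lower bound in Step 2. Hölder continuity alone only gives \emph{upper} bounds for $A$ near its zero set, so it does not by itself supply $\inf_{X\setminus N_r}A \gtrsim r^N$. I would try three things, in order. First, I would try to obtain it from the setting where the lemma is applied: there $A$ and $B$ are norms squared of holomorphic sections, and a Łojasiewicz inequality $|\alpha|^2 \geq c\operatorname{dist}(\cdot,\alpha^{-1}(0))^N$ holds, with constants controlled in families. Second, failing that, I would avoid pointwise lower bounds by running Step 1 with weights. I would multiply the equation by $\varphi(f)\chi_r$, with $\chi_r$ a cutoff equal to $1$ off $N_{2r}$ and $|\nabla\chi_r|\lesssim r^{-1}$. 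The error term $\epsilon\int\varphi(f)\nabla f\cdot\nabla\chi_r$ is supported in $N_{2r}\setminus N_r$, whose volume is $o(r)$; this is precisely where the Minkowski hypothesis and the $\gamma$-dependence enter, and I would try to absorb that term using $\epsilon\leq\epsilon_0$. Third, if both fail, I would fall back on making the dependence of $M$ on $A$ and $B$ explicit. The remaining steps are routine bookkeeping of constants.
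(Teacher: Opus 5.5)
\textbf{The gap cannot be closed.} The gap is exactly where you located it: Step 2 needs a lower bound on $A$ (and on $B$) away from their zero set. No better argument can supply it, because the statement as written is false. The paper's proof also contains the same hole.

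For the counterexample, take $X$ closed, $A\equiv B\equiv\delta$, $w\equiv-1$, and $f\equiv\sinh^{-1}\bigl(1/(2\delta)\bigr)$. This solves the equation for every $\epsilon$, the zero set is empty, and $\|A\|_{C^{0,\gamma}}=\|B\|_{C^{0,\gamma}}=\delta\le 1$. Yet $\|f\|_{L^1}=\vol(X)\sinh^{-1}\bigl(1/(2\delta)\bigr)\to\infty$ as $\delta\to 0$. A nonempty zero set does not help: the minimum principle gives $f\ge\log(1/\delta)$ for any solution with $0\le A\le\delta$ and $w\equiv-1$. So $M$ must depend on a quantitative lower bound for $A,B$ off $Z$.

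Mere positivity off $Z$ also looks insufficient, at least heuristically. Take $B\equiv 1$ and $w<0$. As $\epsilon\to0$ solutions should approach the pointwise solution $f_0$ of $Ae^{f}-Be^{-f}+w=0$, and $f_0\approx\log(|w|/A)$ near $Z$. This is not integrable if $A$ vanishes like $\exp(-\operatorname{dist}(x,Z)^{-n})$ at a point.

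The paper bounds $\|f\|_{L^1(\Omega)}$ on regions where $\min(A,B)\ge\eta$, using Doan's $L^3$ bound on $e^{\pm f}$ and then Jensen. It then sums over dyadic shells $E_k$ and asserts that ``the H\"older condition implies that we may take $\eta=C^{-1}2^{-\gamma k}$'', i.e.\ $\min(A,B)\gtrsim\operatorname{dist}(x,Z)^{\gamma}$ on $E_k$. H\"older continuity gives only the reverse inequality $A(x)\le[A]_{\gamma}\operatorname{dist}(x,A^{-1}(0))^{\gamma}$. So your ``main obstacle'' is a missing hypothesis, not a missing idea, and your second and third fallbacks cannot rescue the lemma as stated.

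\textbf{The repair, and how your route compares.} The right fix is your first option turned into a hypothesis. Assume $\min(A,B)(x)\ge c\,\operatorname{dist}(x,Z)^N$, together with $A,B\ge 0$ and the positive-Laplacian convention, which you also rightly impose. Let $M$ depend additionally on $c$, $N$ and the rate in $\vol(N_r)=o(r)$; the paper's $M$ secretly depends on that rate too, through its choice of $K$. With this, your Steps 1 and 2 are a complete proof on a closed manifold, and a cleaner one than the paper's:
\begin{itemize}
\item Testing against $\mathbf{1}_{\{f>t\}}$ disposes of the Laplacian by sign on all of $X$. You therefore need no localization to subregions, no Doan estimate, no Jensen and no dyadic sum.
\item The bound is uniform in every $\epsilon>0$.
\item H\"older continuity is never used.
\item $\vol(N_r)\le C r^{s}$ for any $s>0$ already suffices.
\end{itemize}

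\textbf{Caveats.} First, your integration by parts discards the boundary term $-\epsilon\int_{\partial X}\varphi(f)\,\partial_\nu f$, so you are proving only the closed case. That is the case used in Theorem~\ref{currents}. Second, in that application $A=|\widetilde{\beta_j}|^2$ and $B=|\widetilde{\alpha_j}|^2$ vary with $j$, so $c$, $N$ and the Minkowski rate must be uniform in $j$. That amounts to a uniform {\L}ojasiewicz estimate for the convergent holomorphic sections, which you would still have to prove and which the paper does not supply.
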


\begin{proof}
    Let $\Omega$ be a bounded region with $\overline\Omega\subsetneq X,$ on which $\min(A, B) \geq \eta$. We will first show an $L^1$ bound on $e^f$ in this region.

    \begin{remark}
    Throughout the proof of this lemma, we will use the convention that the constant $C$ may vary from line to line, but depends only on the $C^0$ norms of the functions $A, B,$ and $w$ on all of $X,$ and the volume of $X.$ In particular, it is independent of $\eta$ and $\Omega.$
    \end{remark}

    Note that this equation implies $u:= e^f$ satisfies the differential inequality $$\epsilon\Delta u + Au^2 + (-B+wu) \leq 0.$$ Doan proves in \cite[Equation (3.5)]{DoanKW} that this differential inequality implies the bound $$||u||_{L^3(\Omega)}\leq C\min(1, \eta^{-1}),$$ where the constant $C$ depends only on $\epsilon_0,$ bounds on the $C^0(X)$ norms of $A, B,$ and $w,$ as well as the volume of $X.$ Using H\"older's inequality, we get an $L^1$ bound: $$||u||_{L^1(\Omega)}\leq C\min(1, \eta^{-1})\vol(\Omega)^{2/3}.$$ An analogous differential inequality and argument for $e^{-f}$ also holds, giving us: $$||e^{|f|}||_{L^1(\Omega)}\leq C\min(1, \eta^{-1})\vol(\Omega)^{2/3}.$$

    Since the function $t\mapsto e^t$ is convex, we may use Jensen's inequality to conclude that: $$\frac1{\vol(\Omega)}||f||_{L^1(\Omega)}\leq \log\left(\frac1{\vol(\Omega)}||e^{|f|}||_{L^1(\Omega)}\right)\leq C\min(0, -\log \eta-\frac13\log\vol(\Omega))$$

    For $x, y\in X,$ let $\rho(x, y)$ be the Riemannian distance between these points. Let $Z = A^{-1}(0)\cup B^{-1}(0).$ For each positive integer $k$, let $$E_k := \left\{x: \rho(x, Z) \in \left[2^{-k}, 2^{-k+1}\right]\right\}, ~~F_k = \left\{x: \rho(x, Z) \leq 2^{-k+1}\right\}.$$ Since the $(n-1)$-dimensional upper Minkowski content of $Z$ is zero, there is an integer $K$ such that whenever $k>K$, we have $\vol(E_k) \leq 2^{-k}.$ The H\"older condition implies that we may take $\eta = C^{-1}2^{-\gamma k}$ for some constant $C.$ Hence, $$||f||_{L^1(F_K)} \leq C\sum_{k=K}^\infty \left(\gamma k+\frac13(k-1) \right)2^{-k}\leq C\sum_{k=1}^\infty \left(\gamma k+\frac13(k-1) \right)2^{-k},$$ and this sum converges to a finite value depending only on $\gamma$. Since the $L^1$-norm of $f$ on the complement of $F_K$ is clearly bounded, we have proved the desired estimate.
\end{proof}

\begin{remark}\label{l1good}
    In fact, we have proved something slightly stronger, which is that in each $F_k,$ the $L^1$-norm is bounded by: $$||f||_{L^1(F_k)}\leq M\nu(k),$$ where $M$ is independent of $k$ and $\nu:\mathbb{N}\to\mathbb{R}$ is a function satisfying $\lim_{k\to \infty}\nu(k) = 0.$
\end{remark}

\begin{theorem}\label{currents}
    Suppose that $(A_j, \alpha_j, \beta_j, \epsilon_j)$ is a sequence of solutions to Equations \ref{rescaledholomsw} with respect to a K\"ahler parameter $(g, B, 2r\omega)$ for the line bundle $L\to X$, and converging to the limiting configuration $(A, \alpha, \beta, Z)$ in the sense of Theorem \ref{c-infty}. Then, taking a subsequence of the connections $A_j$ if necessary, the following holds in the sense of currents: $$\lim_{j\to\infty}\frac{\sqrt{-1}}{2\pi} F_{A_j}=\sigma+\frac12\sum_{i=1}^k q_iZ_i,$$ where $\sigma$ is a smooth closed real $(1,1)$-form that agrees with the form $\frac{\sqrt{-1}}{2\pi}F_A$ on $X - Z.$
\end{theorem}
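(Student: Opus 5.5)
We use the complex gauge description from the proof of Theorem \ref{c-infty}. There we wrote $(\widetilde{A_j}, \widetilde{\alpha_j}, \widetilde{\gamma_j^*}) = e^{f_j/2}\cdot(A_j,\alpha_j,\gamma_j^*)$, where $\widetilde{A_j}\to\widetilde A$ in $C^\infty$ on all of $X$ and $f_j\to f$ in $C^\infty_{\text{loc}}(X-Z)$. The plan has three steps: express the curvatures through this gauge change, pass to the limit in the sense of currents, and identify the singular part of $f$ along each $Z_i$.

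\emph{Step 1: curvature formula.} Using the formula $\sqrt{-1}\Lambda F_{uA} = \sqrt{-1}\Lambda F_A + \Delta h$ from Section 3, or more precisely its un-contracted form $F_{e^{h}A} = F_A + 2\partial\delbar h$ for a real function $h$ (up to sign conventions), we obtain
\[
\frac{\sqrt{-1}}{2\pi}F_{A_j} = \frac{\sqrt{-1}}{2\pi}F_{\widetilde{A_j}} + c\,\frac{\sqrt{-1}}{2\pi}\partial\delbar f_j
\]
for a universal constant $c$. The first term converges smoothly to $\frac{\sqrt{-1}}{2\pi}F_{\widetilde A}$, which is a smooth closed real $(1,1)$-form. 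So it suffices to show that $f_j$ converges in $L^1(X)$ to a function $f$ with $f\in L^1(X)$, since then $\partial\delbar f_j\to\partial\delbar f$ as currents.

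\emph{Step 2: $L^1$ control.} The function $f_j$ satisfies a rescaled Kazdan--Warner equation $\epsilon_j\Delta f_j + A e^{f_j} - B e^{-f_j} + w = 0$. Here the coefficients $A = \frac14|\widetilde{\alpha_j}|^2$ and $B=\frac14|\widetilde{\gamma_j}|^2$ are uniformly Hölder, since they are smooth and converge in $C^\infty$. Their limiting zero set is $Z$, which is analytic and so has zero $(n-1)$-Minkowski content by Lemma \ref{minkowski}. Applying Lemma \ref{l1} gives a uniform bound $\|f_j\|_{L^1}\le M$. Combined with Remark \ref{l1good}, this gives a uniform bound $M\nu(k)$ on the tubes $F_k$ around $Z$. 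The $C^\infty_{\text{loc}}$ convergence on $X-Z$ then yields $f_j\to f$ in $L^1(X)$ by a tail argument.

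One caveat: the constants in Lemma \ref{l1} must be uniform along the sequence even though the coefficients vary with $j$. I would handle this by noting that the proof of Lemma \ref{l1} only uses lower bounds $\min(A,B)\ge C^{-1}\rho(x,Z)^{\gamma}$. These hold uniformly for large $j$ away from an arbitrarily small neighborhood, because the coefficients converge in $C^\infty$ and their zero sets converge. This is the step I expect to be most delicate.

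\emph{Step 3: identifying the singular part.} Write $\frac{\sqrt{-1}}{2\pi}F_{\widetilde A}+c\frac{\sqrt{-1}}{2\pi}\partial\delbar f = T$. On $X-Z$, $T$ is the smooth form $\frac{\sqrt{-1}}{2\pi}F_A$. By Lemma \ref{fabounded}, $F_A$ is bounded, so it extends to an $L^\infty$ form $\sigma$; being $d$-closed away from a set of Hausdorff codimension $\ge2$ with bounded coefficients, $\sigma$ is closed as a current. It follows that $T-\sigma$ is a closed $(1,1)$ current of order zero supported on $Z$. By the support theorem, its codimension-one part is $\sum_i c_i[Z_i]$, while the codimension-$\ge2$ pieces of $Z$ carry no such current. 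To compute $c_i$, integrate over the small disks $D_r$ transverse to $Z_i$ and use Stokes together with the defining limit of $q_i$. The flux of $T$ through $D$ is the degree of $L|_{\partial D}$-holonomy, giving $c_i = \frac12 q_i$. The factor $\frac12$ appears because $q_i$ is defined using the determinant connection $A'=A^2\otimes\nabla_0'$, whose holonomy is twice that of $A$ near $Z_i$, while $\nabla_0'$ is smooth.

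Finally, $\sigma$ is smooth. The reason is that $T$ is $\frac{\sqrt{-1}}{2\pi}F_{\widetilde A}$ plus $\partial\delbar$ of an $L^1$ function; subtracting $\frac{c}{2}\sum q_i\log|s_i|^2$, the Poincaré--Lelong contributions, leaves a function whose $\partial\delbar$ is bounded, and hence, by elliptic regularity for the remaining Kähler data, smooth. Thus $\sigma$ is a smooth closed real $(1,1)$-form. Passing to subsequences where needed completes the proof.
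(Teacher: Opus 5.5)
Your outline follows the paper's proof. The decomposition $F_{A_j}=F_{\widetilde{A_j}}+\partial\overline{\partial} f_j$ and the convergence $f_j\to f$ in $L^1(X)$ from Lemma \ref{l1}, Remark \ref{l1good} and $C^\infty_{\text{loc}}$ convergence are the paper's Step 1. The flux computation through transverse disks is its Step 4, and your account of the factor $\frac12$ via $A'=A^2\otimes\nabla_0'$ is correct. Your identification of the divisorial part differs slightly: you apply the support theorem once, globally, to $T-\sigma$, with $\sigma$ the bounded extension of $\frac{\sqrt{-1}}{2\pi}F_A$. This is a tidy substitute for the paper's Poincar\'e--Lelong decomposition (Step 2) plus its local support-theorem argument (Step 3A).

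That route, however, relies on something you never state: the limit is $f=\log|\widetilde\alpha|-\log|\widetilde\gamma|$, which follows from $|\alpha|=|\gamma|$ on $X-Z$. This makes $\partial\overline{\partial} f$ a difference of positive currents modulo smooth forms. For a mere $L^1$ function pluriharmonic off $Z$, such as $\mathrm{Re}(1/z)$, $\partial\overline{\partial} f$ need not have order zero, and then the support theorem does not apply. The same identification is also what licenses the Poincar\'e--Lelong subtraction in your last paragraph.

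Separately, your fix for uniformity in Step 2 targets the wrong region. The tail bound is needed inside the shrinking neighbourhoods $F_k$ of $Z$, uniformly in $j$. There, a lower bound on $\min(|\widetilde{\alpha_j}|^2,|\widetilde{\gamma_j}|^2)$ in terms of $\rho(x,Z)$ can fail for every finite $j$, because the zero sets move. Such lower bounds come from analyticity, not from H\"older bounds, and the paper merely asserts this uniformity.

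The genuine gap is the final claim that $\sigma$ is smooth. Let $g$ be the function left after your Poincar\'e--Lelong subtraction. Knowing only $\partial\overline{\partial} g\in L^\infty$, Calder\'on--Zygmund gives $g\in L^p_2$ for all $p<\infty$, hence $g\in C^{1,\alpha}$, and nothing more. Bootstrapping would need $\Delta g$ to be smooth, which is exactly the smoothness of $\sigma$ you are trying to prove, so the step is circular.

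What is missing is a localization that uses the holomorphic structure of $f$. Divide $\widetilde\alpha,\widetilde\beta$ by local defining sections $s_\alpha,s_\beta$ of their divisorial parts. The remainder $\log|s_\alpha^{-1}\widetilde\alpha|-\log|s_\beta^{-1}\widetilde\beta|$ is then manifestly smooth except at finitely many isolated zeros. This is how the paper reduces the problem to the local function $G=\log(|a|^2/|b|^2)$ at those points (Steps 2--3B).

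Be warned that estimates alone cannot finish the job there. For $a=(z,w,z^2)$ and $b=(z,w)$ one gets $G=\log\bigl(1+|z|^4/(|z|^2+|w|^2)\bigr)$. This $G$ is bounded and $C^{1,1}$ with bounded $\partial\overline{\partial} G$, yet it is not $C^2$ at the origin. So the paper's own passage from a $C^0$ bound on $G$ to smoothness also needs further justification. Smoothness of $\sigma$ at the isolated zeros has to come from additional structure of the limiting configuration, not from any of these bounds.
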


\begin{proof}
    We will use the notation from the proof of Theorem \ref{c-infty}. For each connection $A_j$ we defined another connection $\widetilde{A_j} = e^{f_j/2} A_j$ (where we suppress the unitary gauge transformation without loss of generality.) The curvature satisfies: $$F_{A_j} = F_{\widetilde{A_j} } + \partial\overline{\partial} f_j.$$ Clearly, $F_{\widetilde{A_j}}$ converges in $C^\infty(X)$ to $F_{\widetilde{A}},$ which is some smooth imaginary 2-form on $X.$ We therefore wish to compute $\lim_{j\to\infty} \partial\overline{\partial} f_j.$

    \emph{Step 1: $L^1$ Convergence}. From Theorem \ref{c-infty}, some subsequence of the $f_j$ converges to a function $f:X-Z\to \mathbb{R}$ in $C^\infty_{\text{loc}}(X - Z).$ We will first show that in fact $f_j\to f$ in $L^1(X).$ Since differentiation is continuous in the weak topology of currents, this will imply $\partial\overline{\partial} f_j$ converges to $\partial\overline{\partial} f$ as currents. 
    
    Substituting the relation $F_{A_j} = F_{\widetilde{A_j}} + \partial\overline{\partial} f_j$ into the 4th equation of Equation (\ref{rescaledholomsw}), we see that $f_j$ satisfies the following differential equation: $$\epsilon_j\Delta f_j + |\widetilde{\beta_j}|^2e^{f_j}-|\widetilde{\alpha_j}|^2e^{-f_j}+w_j=0,$$ where $w_j:X\to\mathbb{R}$ is smooth and converges to 0 in $C^\infty(X)$ as $j\to \infty.$ Choosing $\gamma = \frac12,$ clearly $|\widetilde{\alpha_j}|^2_{C^{0,\frac12}(X)}$ and $|\widetilde{\beta_j}|^2_{C^{0,\frac12}(X)}$ are bounded independently of $j,$ so the hypotheses of Lemma \ref{l1} are satisfied, and the value of $M$ in the resulting bound, as well as the bounds in Remark \ref{l1good}, can be taken independently of $j$.

    Let $\delta>0$. Recalling the notation of Remark \ref{l1good}, there is some $J$ such that $\nu(j)<\frac{\delta}{4C}.$ Theorem \ref{c-infty} then guarantees a $J',$ depending on $J,$ such that $$||f_j - f||_{L^1(X - F_J)}<\frac\delta2$$ for all $j\geq J'.$ Remark \ref{l1good} together with Fatou's Lemma show that for all $j\geq J$ we have: $$||f_j-f||_{L^1(F_J)}\leq ||f_j||_{L^1(F_J)}+||f||_{L^1(F_J)}\leq 2C\nu(J)<\frac\delta2.$$ Combining these estimates, we get $||f_j-f||_{L^1(X)}< \delta$ for all $j>\max(J, J'),$ so since $\delta$ was chosen arbitrarily this suffices to prove that $f_j\to f$ in $L^1(X).$

    \emph{Step 2: Calculation of the current}. The function $f:X- Z\to\mathbb{R}$ clearly satisfies $f = \log|\widetilde{\alpha}|-\log |\widetilde\beta|.$ Since $\widetilde\alpha, \widetilde\beta$ are holomorphic, the current  $\partial\overline{\partial} f$ is supported on the zero sets of $\widetilde\alpha$ and $\widetilde\beta.$ Since $e^{-f/2}|\widetilde\alpha|=|\alpha|,$ and $|\alpha|$ is continuous, it is clear that the vanishing set of $\alpha'$ contains that of $\alpha,$ and likewise for $\widetilde\beta.$ Hence, the zero sets of $\widetilde\alpha$ and $\widetilde\beta$ consist of isolated points as well as codimension 1 irreducible components $Z_1, \dots, Z_M$, where $M\geq k$ and $Z_1,\dots, Z_k$ are the irreducible codimension 1 components of $Z.$ Recall that $|\widetilde\beta| = |\widetilde\gamma|,$ and $\widetilde\gamma$ is a holomorphic section of $E^*\otimes L^{*}\otimes K.$ Hence, there exist meromorphic functions $s_\alpha$ and $s_\beta$ with divisors supported on $Z_1\cup\dots\cup Z_M$ such that $s_\alpha^{-1}\widetilde\alpha$ and $s_\beta^{-1}\widetilde\beta$ vanish at only isolated points. We may therefore write: $$f = \log|s_\alpha|-\log |s_\beta|+ \log|s_\alpha^{-1}\widetilde\alpha|-\log |s_\beta^{-1}\widetilde\beta|.$$ Hence, the Poincar\'e-Lelong formula (see for instance \cite[p. 188]{GH}) tells us $$\partial\overline{\partial} f = T-2\pi\sqrt{-1}\sigma$$ where $$\sigma := \partial\overline{\partial}\log|s_\alpha^{-1}\widetilde\alpha|-\partial\overline{\partial}\log |s_\beta^{-1}\widetilde\beta|$$ is smooth except possibly at isolated zeroes of $s_\alpha^{-1}\widetilde\alpha, s_\beta^{-1}\widetilde\beta,$ and where $\frac{\sqrt{-1}}{\pi}T$ is the current associated to a divisor supported on $Z_1\cup\dots\cup Z_M$. The $C^\infty_{\text{loc}}(X - Z)$ convergence of $A_i$ provides the claim that $\sigma$ agrees with the smooth form $\frac{\sqrt{-1}}{2\pi}F_A$ on $X - Z.$ 

    \emph{Step 3: Showing $\sigma$ is a smooth form.} Suppose $z_0\in X$ is an isolated zero of $s_\alpha^{-1}\widetilde\alpha$ or $s_\beta^{-1}\widetilde\beta$, and let $U$ be a small closed neighborhood of $z_0$, that we may identify with a ball in $\mathbb{C}^2$ centered at the origin. We choose $U$ such that $z_0$ is the only zero of $s_\alpha^{-1}\widetilde\alpha$ or $s_\beta^{-1}\widetilde\beta$ in $U.$ In a trivialization of $E\otimes L$ over $U$, we will define some notation: $a:=s_\alpha^{-1}\widetilde\alpha = (\alpha_1, \alpha_2, \dots, \alpha_N)$, and $b:= s_\beta^{-1}\widetilde\beta = (\beta_1, \beta_2, \dots, \beta_N)$. Defining $G:X\to\mathbb{R}$ as $$G := \log \frac{|\alpha_1|^2+\dots+|\alpha_N|^2}{|\beta_1|^2+\dots+|\beta_N|^2} = \log \frac{|a|^2}{|b|^2},$$ we have the equality of currents $$\sigma = \frac12\partial\overline{\partial}G.$$ It is clear that $G$ is smooth wherever it is bounded; therefore, if we can show $G$ is bounded on $U$, then $\sigma$ is in fact smooth on $U$. 
    
    %We will do this in two sub-steps. \emph{Step 3A: Showing $\partial\overline{\partial}G$ is a weak derivative.} We will first show that on $U,$ the current $\partial\overline{\partial}G$ is in fact given by integration against an $L^1$-function. Note that the function $\partial\overline{\partial}G\in $ Recall that a current $\Theta$ is \emph{normal} if $\Theta$ and $d\Theta$ are both currents of order zero, i.e. each coordinate function is given by a complex measure (see \cite[\sectionmark 2.C]{Demailly} for instance.) To do this, it is sufficient to show an $L^1_2(U)$ bound on $G.$ Upon identifying $U$ with a ball in $\mathbb{C}^2,$ taking $z_0$ to the origin, we let $z, w$ be holomorphic coordinates and $r$ the radial function. Since the functions $\alpha_i, \beta_i^*$ are holomorphic, WEIERSTRASS PREPARATION there are some constants $C, N, M$ such that for each $k\in\{0,1,2\}$: $$C^{-1}r^{N-k}\leq\left|\nabla^k|a|\right|\leq Cr^{M-k},~C^{-1}r^{N-k}\leq\left|\nabla^k|b|\right|\leq Cr^{M-k},$$ where $\nabla^k$ indicates the $k$th covariant derivative, and $\nabla^0$ indicates not taking a derivative. Hence, for each pair of real coordinates $e_i, e_j,$ we have: $$\left|\nabla_i \log |a|\right| = \frac{\left|\nabla_i|a|\right|}{|a|}\leq C^2r$$v$$\left|\nabla_i\nabla_j \log |s_\alpha^{-1}\widetilde\alpha|\right| =  $$

    \emph{Step 3A: Showing $\sigma$ is a weak derivative.} We know that in the sense of distributions, $\sigma = \partial\overline{\partial}G$; we will show that this is true in the sense of weak derivatives as well. 

    We first show that there exists an $L^1(U)$ differential form $F$ which satisfies $F = \partial\overline{\partial}G$ in the sense of weak derivatives. To do this, note $G = \log |a|^2 - \log |b|^2$ is the difference of two plurisubharmonic functions, so by \cite[Section 2.C and Remark 3.4.A, Chapter 3]{Demailly}, $\sigma = \partial\overline{\partial} G$ is a current of order zero (i.e. each coordinate function is given by a complex measure) on $U$ and the mass norm $|\sigma|_U$ is bounded. Therefore, the mass norm of $\mathds{1}_{U - \{z_0\}} \sigma$ is also bounded, but since $G$ is smooth on $U - \{z_0\},$ this latter quantity is given by $|\partial\overline{\partial} G|_{L^1(U)}.$ So defining $F$ to be 0 at $z_0$ and as $F := \partial\overline{\partial} G$ on $U - \{z_0\}$, we see $F$ is an $L^1$ weak solution of $F = \partial\overline{\partial} G$ on $U$, sufficing for the claim.
    
    By definition, $F = \mathds{1}_{U - \{z_0\}} \sigma$, so we must now show $\sigma = \mathds{1}_{U - \{z_0\}} \sigma.$ Note that $F$ is closed as a current; this follows either from the El Mir Extension Theorem \cite[Corollary 2.11, Chapter 3]{Demailly} or from the Harvey-Polking Theorem \cite[Theorem 2.5]{Harvey-Polking}.
    
    %On $U - U\cap Z,$ we have the equality of currents $\partial\overline{\partial}G = \frac{\sqrt{-1}}{2\pi} F_A$ from Step 2. Lemma \ref{fabounded} provides a $C^0(U)$ bound, and thus an $L^1(U)$ bound, on $\frac{\sqrt{-1}}{2\pi} F_A$, so it defines a current $\widetilde{F}$ on all of $U$ by integration. By the Harvey-Polking Theorem \cite[Theorem 2.5]{Harvey-Polking}, the current $\widetilde{F}$ is also closed. Since the current $\widetilde{F}$ is $L^1$ and closed, it is clearly normal.
    
   Recall that a current $\Theta$ is \emph{normal} if $\Theta$ and $d\Theta$ are both currents of order zero (see \cite[Section 2.C, Chapter 3]{Demailly} for instance.) Being both closed and the difference of two positive currents, $\sigma =\partial\overline{\partial}G$ and $F = \mathds{1}_{U - \{z_0\}} \sigma$ are both normal. By \cite[Corollary 2.11, Chapter 3]{Demailly}, two normal $(p,p)$-currents whose difference is supported in an analytic subset of dimension $<p$ are equal. Since $\sigma- F$ is supported at $z_0,$ the equality of currents $\sigma = F$ must therefore hold on all of $U.$
   
   %Since has measure zero, $$\widetilde{F} = \mathds{1}_{U - U\cap Z}\frac{\sqrt{-1}}{2\pi} F_A = \mathds{1}_{U - U\cap Z}\partial\overline{\partial}G,$$ and $G$ is smooth on $U - U\cap Z$, we must have that $\sigma =\partial\overline{\partial}G$ in the sense of weak derivatives. Moreover, since $\frac{\sqrt{-1}}{2\pi} F_A$ satisfies a $C^0(U)$ bound, clearly $\sigma \in L^p(U)$ for each $p.$
    
    \emph{Step 3B: Showing a $C^0(U)$ bound on $G$.} We now wish to use elliptic estimates and the Taubes theorem to show a $C^0$ bound on $G.$

    We first claim $G\in L^p(U)$ for each $p.$ Since $G = \log |a|^2 - \log |b|^2,$ and each term is the logarithm of a real analytic function vanishing only at one point in $U,$ that $G\in L^p(U)$ follows from considering the Taylor expansions of these functions, and the integrability of $(\log r)^p$ on the unit ball in $\mathbb{R}^4$ where $r$ is the radial coordinate. 

    We now claim $\partial\overline{\partial}G$ is in $L^p(U)$ for each $p<\infty.$ From Step 2, we know that on $U - U\cap Z,$ the $L^1$ differential form $\partial\overline{\partial}G$ agrees with the form $\frac{\sqrt{-1}}{2\pi} F_A.$ By Lemma \ref{fabounded}, the form $\frac{\sqrt{-1}}{2\pi} F_A$ satisfies a $C^0(U)$ bound, hence a $L^p(U)$ bound for each $p.$ Since $Z$ has measure zero (see Lemma \ref{Zanalytic}), we conclude $\partial\overline{\partial}G$ must also satisfy an $L^p(U)$ bound for each $p.$ Applying the contraction with the K\"ahler form $\Lambda,$ we get an $L^p(U)$ bound on $\Lambda \sigma$ for each $p$ as well. 
    
    Now, we put these bounds together with an elliptic estimate. By the K\"ahler identities $\Lambda\partial\overline{\partial} = i\Delta,$ which is elliptic, and by Step 3A, we know $-i\Lambda \sigma = \Delta G$ in the sense of weak derivatives. Hence, the Calder\'on-Zygmund elliptic estimate gives us: $$||G||_{L^4_2(U)}\leq C\left(||\Delta G||_{L^4(U)}+||G||_{L^4(U)}\right),$$ hence there is a bound on $||G||_{L^4_2(U)}.$ Since $U$ has real dimension 4, there is a Sobolev embedding $C^0(U)\hookrightarrow L^4_2(U),$ and we conclude that $||G||_{C^0(U)}$ is bounded. By the discussion before Step 3A, this fact implies that $\sigma$ is represented by a smooth form on all of $U.$ Repeating this argument for each isolated zero of $s_\alpha^{-1}\widetilde\alpha$ or $s_\beta^{-1}\widetilde\beta$, we show $\sigma$ is in fact represented by a smooth form on all of $X.$
    
    \emph{Step 4: Calculating the multiplicities of $\frac{\sqrt{-1}}{\pi}T$.} What remains to be shown is that the multiplicities $\frac12q_j$ agree with those of $\frac{\sqrt{-1}}{2\pi}T$. We know that $\frac{\sqrt{-1}}{2\pi}T$ is the current associated to some divisor. Hence, this claim follows from the fact that, in the notation of Lemma \ref{q}, $$\int_{\partial D_r}a = \int_{D_r} (2F_A - F_{\nabla_0}) = \lim_{i\to\infty} \int_{D_r} (2F_{A_i} - F_{\nabla_0}) = \lim_{i\to\infty} \int_{D_r} 2F_{A_i}.$$
\end{proof}

\subsection{Limiting configurations when $N =2$}

We can say slightly more when the rank of the auxiliary bundle $E$ is 2. The key input in the special case $N=2$ is the following result of Taubes, which we state in a very special case, the K\"ahler case, since this is what we will need.

\begin{theorem}[special case of {\cite[Proposition 1.2]{Taubes16}}]\label{taubes4}
    With $X, B, \omega$ as above, if $E$ is an $SU(2)$-bundle, then for any limiting configuration $(A, (\alpha, \beta), Z),$ there is in addition a unitary isomorphism of bundles $L^{\otimes 2}\cong K_X$ on $X-Z$ taking $A\otimes A$ to $\nabla_0.$ 
\end{theorem}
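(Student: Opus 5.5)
We prove the statement directly from the structure of a limiting configuration on $X-Z$, rather than by reproducing Taubes' general argument. Write $\Phi=(\alpha,\beta)$ with $\alpha\in\Gamma(E\otimes L)$ and $\beta\in\Omega^{0,2}(E\otimes L)$, and let $\gamma=\beta^*\in\Gamma(E^*\otimes L^*\otimes K_X)$. By the lemmas of Section 4.3, on $X-Z$ the sections $\alpha$ and $\gamma$ are holomorphic. Condition (e) of Definition \ref{limiting-config} gives $\tr(\alpha\otimes\beta^*)=0$ and $|\alpha|=|\beta|$. Since $|\Phi|\neq 0$ off $Z$, both $\alpha$ and $\gamma$ are nowhere vanishing on $X-Z$, and $\alpha\otimes\gamma$ is a nowhere-zero traceless rank-one endomorphism.

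\textbf{Step 1: trivializing $L^2\otimes K_X^{-1}$.} When $N=2$ we use the $SU(2)$ structure. It gives a holomorphic, metric-compatible isomorphism $E\cong E^*$, namely $v\mapsto \epsilon(v,\cdot)$ with $\epsilon$ the holomorphic volume form (parallel for $B$ because $\det E$ is trivial with its flat trivialization). Since $\gamma\in E^*\otimes L^*\otimes K_X$ annihilates $\alpha$ and $\dim E=2$, the kernel of $\gamma$ equals the span of $\alpha$ pointwise. Hence
$$\gamma=\lambda\,\epsilon(\alpha,\cdot)$$
for a unique nowhere-vanishing section $\lambda$ of $L^{-2}\otimes K_X$ over $X-Z$, because $\epsilon(\alpha,\cdot)$ is a section of $E^*\otimes L$. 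The section $\lambda$ is holomorphic, since $\gamma$ and $\epsilon(\alpha,\cdot)$ are, and the latter does not vanish. Because $\epsilon$ is unitary, $|\epsilon(\alpha,\cdot)|=|\alpha|$, so $|\lambda|=|\gamma|/|\alpha|=1$. Thus $\lambda$ is a holomorphic section of unit norm, and it gives a unitary isomorphism $L^{\otimes2}\cong K_X$ on $X-Z$.

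\textbf{Step 2: identifying the connections.} We show that this isomorphism carries $A\otimes A$ to $\nabla_0$. Let $\nabla$ denote the connection on $L^{-2}\otimes K_X$ induced by $A$ and $\nabla_0$. It is the Chern connection of this holomorphic Hermitian line bundle, since $F_A^{2,0}=0$ and the $(0,1)$-parts are the holomorphic structures. For a holomorphic section $\lambda$ of constant norm, $\partial|\lambda|^2=\langle\nabla^{1,0}\lambda,\lambda\rangle=0$. Because $\lambda$ is nowhere zero on a line bundle, this forces $\nabla^{1,0}\lambda=0$. Together with $\nabla^{0,1}\lambda=0$, we get that $\lambda$ is parallel. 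A parallel unitary isomorphism intertwines the connections, which gives the claim.

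\textbf{Main obstacle.} The delicate point is Step 1: checking that $\epsilon$ is compatible with the holomorphic structure $E_B$ and the metric, and keeping track of which dual/conjugate bundle $\beta^*$ lives in, so that $\lambda$ lands in $L^{-2}\otimes K_X$ and not in a conjugate. As a consistency check, one can verify that the curvature identity in Definition \ref{limiting-config}(f) also yields $2F_A=F_{\nabla_0}$ on $X-Z$.
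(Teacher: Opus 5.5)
Your proof is correct, and it takes a genuinely different route from the paper. The paper does not prove this statement; it imports it from \cite[Proposition 1.2]{Taubes16}. Taubes' result holds on an arbitrary Riemannian 4-manifold (rank-two $E$, under a topological condition on the spinor bundles) and for a broader class of equations. There the normalized limit $\Phi$ itself identifies $E$ (suitably dualized) with $W^+$ on $X-Z$, isometrically and compatibly with the connections, and the present statement is the induced identification of determinant lines.

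You instead work only with determinants and let the K\"ahler structure do the analytic work:
\begin{itemize}
\item the pointwise algebra of condition (e), together with the $B$-parallel $SU(2)$ volume form $\epsilon$, produces $\lambda=\gamma/\epsilon(\alpha,\cdot)$, a holomorphic section of $L^{-2}\otimes K_X$ of constant norm on $X-Z$;
\item the elementary fact that such a section is parallel for the Chern connection then replaces Taubes' analysis.
\end{itemize}

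What your route buys is brevity and self-containment within the paper. It needs only the holomorphicity lemma of Section 4.3, and that lemma uses condition (f) solely through its type-$(1,1)$ consequence $F_A^{2,0}=0$. Taubes' route buys generality, since it needs neither a K\"ahler metric nor a holomorphic $B$. It also gives the stronger rank-two identification rather than only its determinant.

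Two minor remarks. First, the nonvanishing $|\Phi|>0$ on $X-Z$, which you need so that $\alpha$ and $\gamma$ are nowhere zero, is implicit in (f) rather than stated in Definition \ref{limiting-config}. Second, your closing consistency check $2F_A=F_{\nabla_0}$ matches (f) only when (f) is read with $F_A$ on the left, as in the paper's Section 4.3 lemma, and not with the $F_{A'}$ written in Definition \ref{limiting-config}(f). The paper's normalization is not uniform there. Your main argument is unaffected, since it never uses the precise form of (f).
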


\begin{remark}
    The result \cite[Proposition 1.2]{Taubes16} is far more general than what is stated here: in fact, it applies to equations more general than the multi-spinor Seiberg-Witten equations considered here. But, even the statement for these equations holds for a Riemannian manifold with a topological condition on the spinor bundles. 
\end{remark}

Theorem \ref{currents} tells us the equality of smooth currents $\sigma = \frac1{4\pi}F_{\nabla_0}$ on $X - Z.$ Since both sides of this equation are smooth, this equality must hold everywhere. Hence, the cohomology classes of these closed (1,1)-currents must be equal. The Chern-Weil formula then immediately tells us:

\begin{corollary}
    When $N = 2,$ in $H^2(X; \mathbb{R})$ we have the following equality: $$2c_1(L) - c_1(K_X) = \sum_{j=1}^k q_j[Z_j].$$
\end{corollary}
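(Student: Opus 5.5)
The plan is to compare the cohomology class of the limiting current from Theorem \ref{currents} in two ways. On the one hand, each curvature form $\frac{\sqrt{-1}}{2\pi}F_{A_j}$ is closed and represents $c_1(L)$ by Chern-Weil theory. Since convergence in the sense of currents preserves de Rham cohomology classes of closed currents (pair with closed test forms, and use de Rham's theorem for currents), the limit $\sigma+\frac12\sum_i q_iZ_i$ also represents $c_1(L)$ in $H^2(X;\mathbb{R})$. The integration current over each $Z_i$ is closed, and it represents the class $[Z_i]$: the first Chern class of the associated line bundle $L_i$, via Poincar\'e--Lelong applied to $s_i$. We therefore obtain
$$c_1(L) = [\sigma] + \tfrac12\sum_{i=1}^k q_i[Z_i].$$

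On the other hand, we identify $[\sigma]$ using Theorem \ref{taubes4}. On $X-Z$ there is a unitary isomorphism $L^{\otimes 2}\cong K_X$ taking $A\otimes A$ to $\nabla_0$, so $2F_A = F_{\nabla_0}$ there. Since $\sigma$ agrees with $\frac{\sqrt{-1}}{2\pi}F_A$ on $X-Z$, this gives $\sigma = \frac{\sqrt{-1}}{4\pi}F_{\nabla_0}$ on $X-Z$. This is the identity stated just before the corollary; we keep the normalisation with $\frac{\sqrt{-1}}{2\pi}$ consistent. Both sides are smooth forms on all of $X$: $\sigma$ by Step 3 of Theorem \ref{currents}, and $F_{\nabla_0}$ trivially. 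The set $Z$ is nowhere dense by Lemma \ref{Zanalytic}, so by continuity the equality holds on all of $X$. Hence $[\sigma] = \frac12 c_1(K_X)$.

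Substituting this into the first identity and multiplying by $2$ gives
$$2c_1(L) - c_1(K_X) = \sum_j q_j[Z_j].$$

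The main point requiring care is the normalisation. We need the factor $\frac12$ on the divisor part in Theorem \ref{currents}, and the Chern--Weil convention that $\frac{\sqrt{-1}}{2\pi}F_{\nabla_0}$ represents $c_1(K_X)$, to match exactly. The sign and orientation conventions for $q_j$ come from Lemma \ref{q} and Step 4 of Theorem \ref{currents}. Beyond this bookkeeping, the only other check is that passing to the limit of currents preserves cohomology classes, which is standard.
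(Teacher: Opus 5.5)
Your proposal is correct and follows the paper's argument. Theorem \ref{taubes4} gives $\sigma=\frac{\sqrt{-1}}{4\pi}F_{\nabla_0}$ on $X-Z$, smoothness of both sides extends this to all of $X$, and comparing cohomology classes of the limiting current in Theorem \ref{currents} via Chern--Weil yields the identity; you simply make explicit what the paper leaves implicit (that the weak limit of the closed currents $\frac{\sqrt{-1}}{2\pi}F_{A_j}$ still represents $c_1(L)$, and that the integration currents represent $[Z_j]$), and your normalisation with the factor $\sqrt{-1}$ is the correct one.
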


\section{Computations for K\"ahler surfaces}

In this section, we compute the multi-spinor Seiberg-Witten function and the moduli space $\mathcal{M}(X, \mathbf p)$ in a number of examples. In this section, we will always assume that $(X, g)$ is K\"ahler, and that the parameter $\mathbf{p}$ is \emph{good}:

\begin{definition}
    Let $X$ be a smooth, closed, oriented 4-manifold. A parameter $\mathbf{p}\in \mathscr{P}$ is \emph{good} if $\mathbf{p} = (g, B, 2r\omega)$ for a K\"ahler metric $g$ and associated K\"ahler form $\omega,$ if $r$ sufficiently large, and $B$ is WOYM and has been chosen such that $\mathcal{M}$ is Zariski smooth.
\end{definition}

We will moreover assume that $b_1(X) = 0$ since this drastically simplifies our computations.

Then, the virtual (complex) dimension of $\mathcal{M}$ is given by the index of the complex $\mathcal{S}_\mathbb C$: $$\text{virtual}\dim_\mathbb{C}\mathcal{M} = \chi(E\otimes L) - \chi(\mathscr{O}_X),$$ which we could alternatively express entirely in terms of the topological data of $E, L, X$ using the Hirzebruch-Riemann-Roch Theorem.

%One can form the trivial bundle of this complex over the configuration space $\mathcal{A}(L)\times \Omega^0(W^+\otimes E).$ The differentials are equivariant with respect to the action of the gauge group $\mathcal{G}$ by multiplication on the spinor summands of the complex and trivially on the remaining summands. Hence, this complex of bundles descends to a complex of bundles on the space $\mathscr{B} = \left(\mathcal{A}(L)\times \Omega^0(W^+\otimes E)\right)/\mathcal{G},$ which we denote $\mathcal{C}_\mathbb{R}.$

We will use the notation: \begin{itemize}
    \item $v$ is the virtual complex dimension of $\mathcal{M}.$
    \item $a$ is the actual complex dimension of $\mathcal{M}.$
\end{itemize}

Since we assume that $b_1(X) = 0,$ the Picard group of $X$ is discrete, and moreover by the exponential sheaf sequence we know that the map $c_1:\text{Pic}(X)\to H^2(X;\mathbb{Z})$ is injective, so for each topological line bundle $L,$ there is at most one holomorphic structure $\mathscr{L}$ on $L,$ and the moduli space $\mathcal{M}$ is either empty or is Zariski smooth and precisely the projective space $\mathbb{P}H^0(\mathscr{E}\otimes \mathscr{L}).$ On such a moduli space, we wish to compute the integral in Theorem \ref{zariski-smooth}. This will require us to have a formula for the universal bundle $\mu,$ as well as for $e(\mathfrak{O}),$ which follows from formulas for the Chern classes of $\ind \mathcal{S}_\mathbb{C}$ and $T\mathcal{M}$ by Equation \ref{e(o)}.

Clearly, on each such projective space $\mathbb{P}H^0(\mathscr{E}\otimes \mathscr{L})$, the universal bundle $\mu$ restricts to $\mathscr{O}(1),$ and the index bundle of the complex $\mathcal{S}$ is, stably, $$\ind \mathcal{S} =[H^1(\mathscr{E}\otimes\mathscr{L})\otimes \mathscr{O}(1)] -[H^2(\mathscr{E}\otimes\mathscr{L})\otimes \mathscr{O}(1)]-[H^0(\mathscr{E}\otimes\mathscr{L})\otimes \mathscr{O}(1)] \in K(\mathcal{M}).$$ In this case, $a = h^0(\mathscr{E}\otimes\mathscr{L}) - 1.$ Denoting $c_1(\mathscr{O}(1))$ by $x\in H^2(\mathbb{P}^a; \mathbb{Z}),$ we recall that $c(T\mathbb{P}^a) = (1+x)^{a+1}.$ To simplify the notation, let $$m := h^2(\mathscr{E}\otimes\mathscr{L}) - h^1(\mathscr{E}\otimes\mathscr{L}) = \chi(E\otimes L)-h^0(\mathscr{E}\otimes\mathscr{L}).$$ Then, Equation \ref{e(o)} gives us: $$e(\mathfrak{O}) =\left[(1+x)^{-m}\right]_{h^2(\mathscr{O}_X)-m},$$ where the right-hand side is interpreted as 0 if $h^2(\mathscr{O}_X)-m<0.$ Putting the formulas together we have: $$SW_E(X, \mathbf p) = \int_{\mathbb{P}^a}\left[(1+x)^{-m}\right]_{h^2(\mathscr{O}_X)-m}x^{v}.$$ 

\begin{corollary}
    For a good parameter $\mathbf{p}$, the Seiberg-Witten function $SW_E(X, \mathbf p) = 0$ unless $v\geq0$ and $h^2(\mathscr{O}_X)-m\geq 0.$
\end{corollary}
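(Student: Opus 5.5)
The plan is to read the corollary directly off the integral formula just derived for a good parameter,
$$SW_E(X, \mathbf p) = \int_{\mathbb{P}^a}\left[(1+x)^{-m}\right]_{h^2(\mathscr{O}_X)-m}x^{v},$$
and show that the integrand is zero whenever either inequality fails. There are two degenerate situations to handle before the main argument. First, if the moduli space is empty, that is, if $L$ admits no holomorphic structure $\mathscr{L}$ or $h^0(\mathscr{E}\otimes\mathscr{L})=0$, then the integral over the empty set vanishes and there is nothing to prove. Second, when the moduli space is nonempty, it is the projective space $\mathbb{P}^a$ with $a = h^0(\mathscr{E}\otimes\mathscr{L})-1$. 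This identification holds because $b_1(X)=0$ and because of the description preceding the corollary. The Zariski smoothness assumed in the definition of a good parameter then lets Theorem~\ref{zariski-smooth} produce the displayed formula.

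Next, we check the degree bookkeeping. The class $\left[(1+x)^{-m}\right]_{h^2(\mathscr{O}_X)-m}$ is by definition the Euler class $e(\mathfrak{O}) = c_{a-v}(\mathfrak{O})$ of Equation~\eqref{e(o)}. The subscript therefore records the rank $a-v$, so that $a - v = h^2(\mathscr{O}_X)-m$. This is consistent with $v = \chi(E\otimes L)-\chi(\mathscr{O}_X)$ and $b_1=0$, which give $v = \chi(E\otimes L) - 1 - h^2(\mathscr{O}_X)$. Combined with $a = h^0(\mathscr{E}\otimes\mathscr{L})-1$ and $m = \chi(E\otimes L)-h^0(\mathscr{E}\otimes\mathscr{L})$, this yields exactly $a-v = h^2(\mathscr{O}_X)-m$. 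The total degree of the integrand is then $(a-v)+v = a$, as it must be for integration over $\mathbb{P}^a$.

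Now we treat the two failure cases separately. If $h^2(\mathscr{O}_X)-m<0$, then by the stated convention the bracketed term is $0$. Equivalently, the obstruction bundle would have negative rank, which cannot happen, so the moduli space is forced to be empty. Either way the integral is $0$. If $v<0$, then $x^{v}$ is meaningless as a cohomology class. Here the correct reading is that the count $\int e(\mathfrak{O})\smile\mu^{d/2}$ in Theorem~\ref{zariski-smooth} is defined to be zero when the virtual dimension is negative. This is consistent with Theorem~\ref{local-invariance}: a generic perturbation in $\mathscr{U}_{cpt}$ yields a regular moduli space of dimension $v<0$, which must be empty. Since $SW_E$ is locally constant on $\mathscr{U}_{cpt}$, it vanishes at $\mathbf p$ as well.

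The only step needing care is this $v<0$ case, because it depends on interpreting the formula through the generic, transversally cut out definition rather than through the Zariski-smooth integral, where the exponent would be negative. The argument there uses local constancy on $\mathscr{U}_{cpt}$, which Theorem~\ref{strong_compact} provides for good parameters. Everything else is a direct reading of the formula.
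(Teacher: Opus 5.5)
Your proposal is correct and matches the paper, which reads the corollary directly off $SW_E(X,\mathbf p)=\int_{\mathbb{P}^a}[(1+x)^{-m}]_{h^2(\mathscr{O}_X)-m}x^v$, using the convention that the bracket vanishes when $h^2(\mathscr{O}_X)-m<0$. Your detour through local constancy and generic regularity for $v<0$ is valid but not needed: the Zariski-smooth formula already gives $0$, since $e(\mathfrak{O})$ has degree $2(a-v)>2a=\dim_{\mathbb{R}}\mathbb{P}^a$ and so vanishes however $x^v$ is read.
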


\subsection{A Geometric Inequality} It will be useful (and heartening) to know that whenever the moduli space is non-empty that we have $a\geq v,$ and in the $b_1(X) = 0$ case this is implied by the statement: $$h^2(\mathscr{O}_X)\geq h^2(\mathscr{E}\otimes\mathscr{L})~\text{whenever}~h^0(\mathscr{E}\otimes\mathscr{L})>0.$$ In the strongly unobstructed case, we may prove this directly (i.e. without gauge theory) as follows:

\begin{lemma}\label{a-v}
    Let $\mathscr{F}$ be a holomorphic vector bundle on a K\"ahler surface $X$ (not necessarily trivial determinant). Suppose that $h^2(\End_0 \mathscr{F}) = 0$ and $h^0(\mathscr{F})>0.$ Then, $$h^2(\mathscr{O}_X)\geq h^2(\mathscr{F}).$$
\end{lemma}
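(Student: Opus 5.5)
The plan is to use Serre duality to turn both sides into spaces of holomorphic sections, and then embed one into the other using the given section of $\mathscr{F}$. By Serre duality, $h^2(\mathscr{F}) = h^0(\mathscr{F}^*\otimes K_X)$ and $h^2(\mathscr{O}_X) = h^0(K_X)$. So it suffices to construct an injective linear map
$$H^0_X(\mathscr{F}^*\otimes K_X)\to H^0_X(K_X).$$

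Fix a nonzero section $s\in H^0_X(\mathscr{F})$, which exists since $h^0(\mathscr{F})>0$. The candidate map is $\gamma\mapsto \tr(s\otimes\gamma)=\gamma(s)$. I will factor it as a composite of two maps.

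The first map is $\gamma\mapsto s\otimes\gamma \in H^0_X(\End\mathscr{F}\otimes K_X)$. This is injective: if $\gamma\neq 0$, then on the dense open set where both $s$ and $\gamma$ are nonzero, $s\otimes\gamma$ is a nonzero rank-one endomorphism. Hence $s\otimes\gamma\not\equiv 0$. Here I use that $X$ is connected and that the zero sets of nonzero holomorphic sections are nowhere dense.

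The second map is the trace $H^0_X(\End\mathscr{F}\otimes K_X)\to H^0_X(K_X)$, and I will show it is injective. The trace gives a holomorphic splitting $\End\mathscr{F}\cong\End_0\mathscr{F}\oplus\mathscr{O}_X$, with the $\mathscr{O}_X$ summand embedded by scalars. Tensoring with $K_X$ gives
$$H^0_X(\End\mathscr{F}\otimes K_X)\cong H^0_X(\End_0\mathscr{F}\otimes K_X)\oplus H^0_X(K_X),$$
and under this decomposition the trace is, up to the nonzero factor $\operatorname{rk}\mathscr{F}$, the projection onto the second factor. Now $\End_0\mathscr{F}$ is self-dual via the trace pairing. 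So Serre duality (as in the Remark following the definition of WOYM) identifies $H^0_X(\End_0\mathscr{F}\otimes K_X)$ with the dual of $H^2_X(\End_0\mathscr{F})$, which vanishes by hypothesis. Hence the trace map is an isomorphism $H^0_X(\End\mathscr{F}\otimes K_X)\xrightarrow{\sim}H^0_X(K_X)$.

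Composing the two maps shows that $\gamma\mapsto\tr(s\otimes\gamma)$ is injective, and therefore $h^2(\mathscr{F})\le h^2(\mathscr{O}_X)$. No nontrivial determinant assumption is needed anywhere, consistent with the statement. The only step requiring care is the vanishing of $H^0_X(\End_0\mathscr{F}\otimes K_X)$. This is where the obstruction hypothesis enters, and it follows directly from self-duality of $\End_0\mathscr{F}$ together with Serre duality. I do not expect a genuine obstacle. The main thing to check is that the rank-one element $s\otimes\gamma$ is not forced to lie in the traceless part; the argument above shows it lies in the scalar part, so that $s\otimes\gamma$ is in fact determined by its trace.
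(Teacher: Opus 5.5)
Your proof is correct. It rests on the same two ingredients as the paper's proof. The first is unique continuation: $s\otimes\gamma\equiv 0$ forces $s\equiv 0$ or $\gamma\equiv 0$. The second is that Serre duality plus self-duality of $\End_0\mathscr F$ make $\tr\colon H^0(\End\mathscr F\otimes K_X)\to H^0(K_X)$ an isomorphism. Where you differ is the first step. The paper considers the bilinear map $S(\alpha,\beta)=\alpha\otimes\beta$ on $H^0(\mathscr F)\oplus H^0(\mathscr F^*\otimes K_X)$ and identifies $S^{-1}(0)$ as the union of the two coordinate subspaces. It then applies the dimension-of-fibers theorem to the component $H^0(\mathscr F)\oplus 0$, getting $h^2(\mathscr F)\le \dim\overline{\operatorname{im}S}\le h^0(\End\mathscr F\otimes K_X)$. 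You instead fix one nonzero $s$ and observe that $\gamma\mapsto s\otimes\gamma$ is an injective linear map, which gives the same inequality by linear algebra alone. Your route is more elementary and makes the role of $h^0(\mathscr F)>0$ explicit. In the paper that hypothesis enters only through the requirement that $H^0(\mathscr F)\oplus 0$ be an irreducible component of $S^{-1}(0)$, and $\dim\operatorname{image}(S)$ has to be read as the dimension of the closure of the image. The paper's version gains nothing here beyond symmetry in the two factors.

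Your closing remark can be pushed one step further, and it is worth doing so. Since $H^0(\End_0\mathscr F\otimes K_X)=0$, every section of $\End\mathscr F\otimes K_X$ is a multiple of the identity, so $s\otimes\gamma=\frac{1}{\operatorname{rk}\mathscr F}\,\gamma(s)\,\mathrm{id}_{\mathscr F}$. The left side has rank at most one at every point, while the right side has rank $\operatorname{rk}\mathscr F$ wherever $\gamma(s)\neq 0$. So if $\operatorname{rk}\mathscr F\ge 2$ you get $\gamma(s)\equiv 0$, hence $s\otimes\gamma\equiv 0$ and $\gamma=0$. The hypotheses therefore actually force $h^2(\mathscr F)=0$. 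The stated inequality has independent content only in rank one, where the condition on $\End_0\mathscr F$ is vacuous. (Applied to $\mathscr F=\mathscr F_0\otimes\mathscr L$, this shows that in rank at least two the hypotheses $H^2(\End_0\mathscr F_0)=0$ and $\chi(\mathscr F_0\otimes\mathscr L)>h^0(\mathscr F_0\otimes\mathscr L)>0$ of Theorem~\ref{algebraicgeometry} cannot hold simultaneously.)
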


\begin{proof}
    Consider the holomorphic map of complex affine spaces $$S: H^0(\mathscr{F})\oplus H^0(\mathscr{F}^*\otimes K)\to H^0(\End \mathscr{F}\otimes K)$$ given by: $S(\alpha, \beta) := \alpha\otimes \beta.$ First, we claim that $S^{-1}(0) = H^0(\mathscr{F})\cup H^0(\mathscr{F}^*\otimes K).$ For, if $(\alpha, \beta)\in S^{-1}(0),$ then $\alpha\otimes\beta = 0,$ so at each point $p\in X,$ one of $\alpha(p)$ or $\beta(p)$ must vanish. Both $\alpha$ and $\beta$ are global solutions to an elliptic PDE on $X,$ so by the unique continuation theorem, one of $\alpha$ or $\beta$ must be identically zero. Conversely, if one of $\alpha$ or $\beta$ is identically zero then clearly $S(\alpha, \beta) = 0.$ This suffices for the claim.

    The dimension of fibers theorem allows us to bound the dimension of any irreducible component of a fiber of a map from below. In particular, we must therefore have that $$h^0(\mathscr{F}) \geq h^0(\mathscr{F})+h^2(\mathscr{F}) - \dim \text{image}(S)\geq h^0(\mathscr{F})+h^2(\mathscr{F}) - h^0(\text{End}\mathscr{F}\otimes K).$$
    
    Since $h^0_X(\End_0 \mathscr{F}\otimes K) = h^2_X(\End_0 \mathscr{F}) = 0,$ we conclude that the trace map $$\tr:H^0(\End \mathscr{F}\otimes K)\to H^0(K)$$ is an isomorphism. Combining this with our previous inequality gives $h^2(\mathscr{O}_X)\geq h^2(\mathscr{F}),$ as desired. 
\end{proof}

Note that by the formula \ref{zariski-smooth}, the value of the Seiberg-Witten function is 0 unless $a\geq v,$ a fact we will use often in our computations below.

\subsection{The case $h^2(\mathscr{O}_X) = 0$}

We will first concentrate on the case $h^2(\mathscr{O}_X) = 0$, in which the virtual dimension is given by: $$v = h^0(\mathscr{E}\otimes \mathscr{L}) - 1 + m,$$ and when the moduli space is non-empty the actual dimension is given by: $$a = h^0(\mathscr{E}\otimes\mathscr{L}) - 1$$

Assume we have chosen $B$ such that $h^2(\End \mathscr{E}) = 0.$

When $h^0(\mathscr{E}\otimes\mathscr{L})= 0,$ the moduli space is empty and therefore the relevant Seiberg-Witten function is zero as well.

When $h^0(\mathscr{E}\otimes\mathscr{L})>0,$ we know $m\leq 0$ by Lemma \ref{a-v}. Hence, the Euler class expression evaluates to: $$e(\mathfrak{O}) =\left[(1+x)^{-m}\right]_{-m} = x^{-m},$$ and the Seiberg-Witten function is 1 whenever the moduli space is non-empty and $v\geq 0$: $$ \begin{cases}
h^0(\mathscr{E}\otimes\mathscr{L})\geq \chi(\mathscr{E}\otimes\mathscr{L})>0 & SW_E(X, \mathbf{p}) = 1 \\
\text{otherwise} & SW_E(X, \mathbf{p}) = 0 \\
\end{cases}$$ 

When $\mathscr{M}$ is a positive line bundle, for sufficiently large $k$ the bundle $\mathscr{E}\otimes \mathscr{M}^k$ has vanishing higher cohomology, so we immediately have the following example:

\begin{corollary}
    If $X$ is algebraic with $h^1(\mathscr{O}_X)=h^2(\mathscr{O}_X) = 0$ (for instance, if $X$ is $\mathbb{P}^2,$ a Hirzebruch surface, or a blow-up thereof) then there exists a good parameter $\mathbf{p}$ such that $SW_E(X, \mathbf{p}) = 1$ for infinitely many topological line bundles.
\end{corollary}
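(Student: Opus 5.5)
The plan is to pick a stable (hence ASD Yang--Mills, by Donaldson--Uhlenbeck--Yau) holomorphic structure $\mathscr{E}$ on $E$ with $H^2(\End_0\mathscr{E})=0$. Twisting by powers of a positive line bundle then produces infinitely many $L$ for which the formula just derived gives $SW_E=1$.

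\emph{Choice of $B$.} Since $h^2(\mathscr{O}_X)=0$, we have $H^0(K_X)=0$, so it suffices that $H^2(\End_0\mathscr{E})=0$. By Serre duality this is $H^0(\End_0\mathscr{E}\otimes K_X)^*$. On the surfaces listed ($\mathbb{P}^2$, Hirzebruch surfaces and their blow-ups), $-K_X$ is effective, so $H^0(\End_0\mathscr{E}\otimes K_X)\hookrightarrow H^0(\End_0\mathscr{E})$, and the latter vanishes when $\mathscr{E}$ is stable (stable implies simple). For general algebraic $X$ with $p_g=q=0$ one cannot rely on $-K_X$ effective; there I would instead take $c_2=k$ large, where the moduli of stable bundles is generically unobstructed (Donaldson, Gieseker--Li, O'Grady), or simply choose the trivial structure when $c_2=0$, where $\End_0\mathscr{E}$ is trivial and $h^2=0$. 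Then let $B$ be the Hermitian--Einstein connection; it is ASD with $H^2_B=0$, hence WOYM, and Theorem \ref{strong_compact} gives $(g,B,2r\omega)\in\mathscr{U}_{cpt}$ for $r$ large. Because $b_1=0$, the moduli space is $\mathbb{P}H^0(\mathscr{E}\otimes\mathscr{L})$ or empty, and in either case it is Zariski smooth. So the parameter is good, granting the paper's assertion to that effect. Note that the choice of $r$ depends on $L$; the corollary only asks for a good parameter for each of infinitely many $L$, so this is harmless, though one could also keep one $g,B$ and enlarge $r$.

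\emph{Choice of $L$.} Fix an ample $\mathscr{M}$ and set $\mathscr{L}_k=\mathscr{M}^k$, which has underlying bundle $L_k$. By Serre vanishing, $h^1(\mathscr{E}\otimes\mathscr{M}^k)=h^2(\mathscr{E}\otimes\mathscr{M}^k)=0$ for $k\gg0$. Hence $m=0$ and $h^0=\chi(\mathscr{E}\otimes\mathscr{M}^k)$. By Riemann--Roch, this Euler characteristic grows like $\tfrac N2 k^2\,c_1(\mathscr{M})^2>0$. Then $v=h^0-1=a\ge0$, the obstruction bundle has rank $0$ so $e(\mathfrak{O})=1$, and the formula gives $SW_E=\int_{\mathbb{P}^a}x^a=1$. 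This is exactly the case $h^0\ge\chi>0$ of the preceding case analysis. The line bundles $\mathscr{M}^k$ have distinct $c_1$, so they are infinitely many distinct topological line bundles.

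\emph{Main obstacle.} The delicate step is producing a stable $\mathscr{E}$ with $H^2(\End_0\mathscr{E})=0$ for the given topological type $E$. This also requires ensuring the spin$^c$/parity bookkeeping ($W^\pm=\mathbb{S}^\pm\otimes L$) is compatible with all $L_k$, which is fine since any $L$ is allowed. If existence for arbitrary $c_2$ fails, I would restrict to bundles $E$ for which such $\mathscr{E}$ exists, e.g.\ trivial $E$. This reading is consistent with the corollary's existential phrasing "there exists a good parameter."
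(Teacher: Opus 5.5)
Your argument is the paper's. With $B$ chosen so that $h^2(\End\mathscr{E})=0$ (which the paper simply assumes), Serre vanishing for $\mathscr{E}\otimes\mathscr{M}^k$ gives $m=0$ and $h^0=\chi>0$, hence $v=a$, $e(\mathfrak{O})=1$ and $SW_E=\int_{\mathbb{P}^a}x^a=1$ for all $k\gg0$. Two side remarks in your choice of $B$ are wrong, though neither affects the main argument. First, $-K_X$ is not effective on $\mathbb{P}^2$ blown up at ten or more general points; what you actually need is $H^0(\End_0\mathscr{E}\otimes K_X)=0$, which holds for instance when $K_X\cdot[\omega]<0$, or you can use one of your fallbacks. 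Second, the trivial connection has $H^2_B\supseteq H^0_B\cdot\omega\neq0$, so it is not unobstructed; it is nevertheless WOYM, because $H^0(\End_0\mathscr{E}\otimes K_X)=H^0(K_X)^{\oplus(N^2-1)}=0$.
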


This shows that the Seiberg-Witten function can be supported in infinitely many degrees in the same chamber, which is a markedly different behavior than the count of solutions to the 1-spinor Seiberg-Witten equations, and also in stark contrast to the behavior of the multi-spinor Seiberg-Witten invariant for Riemann surfaces \cite{Doan, Thakar}.

\subsection{The case $h^2(\mathscr{O}_X)>0$}
In the case $h^2(\mathscr{O}_X)>0,$ we see immediately from our formula: $$e(\mathfrak{O}) =\left[(1+x)^{-m}\right]_{h^2(\mathscr{O}_X)-m}$$ that $e(\mathfrak{O}) = 0$ whenever $m\leq 0.$

So, the Seiberg-Witten function must be zero unless $m>0$, and of course $v\geq 0, a\geq0.$ Under these inequalities, the Taylor expansion for $(1+x)^{-m}$ allows us to compute that: $$SW_E(X, \mathbf{p}) = (-1)^{h^2(\mathscr{O}_X) - m}{{h^2(\mathscr{O}_X) - 1}\choose{h^2(\mathscr{O}_X) - m}} = (-1)^{h^2(\mathscr{O}_X) - m}{{h^2(\mathscr{O}_X) - 1}\choose{m - 1}},$$ and $SW_E(X, \mathbf{p}) = 0$ otherwise. Note that when $a\geq 0, v\geq 0$, then Lemma \ref{a-v} guarantees $h^2(\mathscr{O}_X)-m\geq0$ so that in fact $SW_E(X, \mathbf{p})\neq0$ whenever $m>0.$ We rewrite these inequalities algebro-geometrically and package them in the following theorem:

\begin{theorem}\label{agsw}
    Let $X$ be a K\"ahler surface with $h^1(\mathscr{O}_X) = 0, h^2(\mathscr{O}_X) >0.$ If $$\chi(E\otimes L)> h^2(\mathscr{O}_X)~\text{and}~\chi(E\otimes L)>h^0(\mathscr{E}\otimes \mathscr{L})>0,$$ then $$SW_E(X, \mathbf{p}) = (-1)^{h^2(\mathscr{O}_X) - m}{{h^2(\mathscr{O}_X) - 1}\choose{h^2(\mathscr{O}_X) - m}}\neq0,$$ and otherwise: $$SW_E(X, \mathbf{p}) = 0.$$
\end{theorem}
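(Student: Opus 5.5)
The plan is to evaluate the Friedman--Morgan integral of Theorem \ref{zariski-smooth} on the explicit moduli space, and then convert the resulting numerical conditions into the stated algebro-geometric inequalities.

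\textbf{The integral.} Since $b_1(X)=0$ and $\mathbf p$ is good, the moduli space $\mathcal{M}$ is either empty or equal to $\mathbb{P}^a=\mathbb{P}H^0(\mathscr{E}\otimes\mathscr{L})$. It is Zariski smooth, and $\mu$ restricts to $x=c_1(\mathscr{O}(1))$. Equation \ref{e(o)} gives $e(\mathfrak{O})=[(1+x)^{-m}]_{h^2(\mathscr{O}_X)-m}$, so
$$SW_E(X,\mathbf p)=\int_{\mathbb{P}^a}\left[(1+x)^{-m}\right]_{h^2(\mathscr{O}_X)-m}x^v.$$
This is nonzero only when three conditions hold:
\begin{itemize}
\item the moduli space is nonempty, i.e.\ $h^0(\mathscr{E}\otimes\mathscr{L})>0$;
\item the degree matches, $h^2(\mathscr{O}_X)-m+v=a$, which holds automatically because $v=a+m-h^2(\mathscr{O}_X)$;
\item $v\geq0$ and $0\leq h^2(\mathscr{O}_X)-m$.
\end{itemize}
In that case the value is the coefficient of $x^{h^2-m}$ in $(1+x)^{-m}$, namely $\binom{-m}{h^2-m}=(-1)^{h^2-m}\binom{h^2-1}{h^2-m}$.

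\textbf{The sign of $m$.} When $m\le 0$ the binomial coefficient vanishes. This follows either from the coefficient formula directly or because $(1+x)^{-m}$ is then a polynomial of degree $-m<h^2-m$. So we need $m>0$. Since $m=\chi(E\otimes L)-h^0(\mathscr{E}\otimes\mathscr{L})$, the condition $m>0$ is exactly $\chi>h^0$.

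\textbf{Translating the remaining conditions.} Using $v=\chi(E\otimes L)-\chi(\mathscr{O}_X)=\chi(E\otimes L)-1-h^2(\mathscr{O}_X)$ (because $h^1(\mathscr{O}_X)=0$), the condition $v\ge0$ becomes $\chi(E\otimes L)>h^2(\mathscr{O}_X)$. The remaining inequality $h^2(\mathscr{O}_X)-m\geq0$ is the one nontrivial input. We supply it with Lemma \ref{a-v} applied to $\mathscr{F}=\mathscr{E}\otimes\mathscr{L}$, where $\End_0\mathscr{F}=\End_0\mathscr{E}$. This gives $h^2(\mathscr{O}_X)\ge h^2(\mathscr{F})\ge h^2(\mathscr{F})-h^1(\mathscr{F})=m$.

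One point needs care here: Lemma \ref{a-v} assumes $h^2(\End_0\mathscr{E})=0$, while goodness only assumes WOYM. Either we take the unobstructed hypothesis of the setting, or we rerun the proof of Lemma \ref{a-v} with WOYM. The WOYM version works because the traceless part of $\alpha\otimes\beta$ on the fiber $S^{-1}(0)$ analysis has rank one, so only rank-$\le1$ sections enter.

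\textbf{Nonvanishing.} Given $m\ge1$ and $h^2-m\ge0$, the binomial $\binom{h^2-1}{m-1}$ has $0\le m-1\le h^2-1$, so it is nonzero.

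\textbf{Converse.} If any one of the listed hypotheses fails, one of the following occurs: $\mathcal{M}$ is empty, $v<0$, or $m\le0$. In each case the integral vanishes, so $SW_E(X,\mathbf p)=0$.

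The main obstacle is this Lemma \ref{a-v} step, that is, guaranteeing $h^2(\mathscr{O}_X)\ge m$ under the paper's standing hypothesis; everything else is bookkeeping.
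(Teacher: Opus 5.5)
Your proposal is correct and follows the paper's own route. You evaluate $\int_{\mathbb{P}^a}[(1+x)^{-m}]_{h^2(\mathscr{O}_X)-m}\,x^v$, note it vanishes for $m\le 0$ because $h^2(\mathscr{O}_X)>0$, use Lemma \ref{a-v} to get $h^2(\mathscr{O}_X)-m\ge 0$ (and hence nonvanishing of the binomial), and rewrite $h^0>0$, $m>0$, $v\ge 0$ as the stated inequalities.

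Your WOYM caveat catches a point the paper glosses over, but your justification does not work as written. On $S^{-1}(0)$ the tensor is already zero, and the traceless projection of a rank-one tensor generally has full rank. The correct fix is this: for fixed $\alpha\neq 0$, the map $\beta\mapsto\tr(\alpha\otimes\beta)\in H^0(K_X)$ is injective, since its kernel consists of traceless rank-$\le 1$ sections, which WOYM kills. In fact the inequality is automatic anyway, since $h^2(\mathscr{O}_X)-m=a-v=\operatorname{rank}_{\mathbb{C}}\mathfrak{O}\ge 0$ whenever $\mathcal{M}\neq\emptyset$.
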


The inequalities $\chi(E\otimes L)> h^2(\mathscr{O}_X)$ and $\chi(E\otimes L)>h^0(\mathscr{E}\otimes \mathscr{L})>0,$ in turn require that: $$h^0(\mathscr{E}\otimes \mathscr{L})>0, ~h^0(\mathscr{E}^*\otimes \mathscr{L}^*\otimes K_X)=h^2(\mathscr{E}\otimes \mathscr{L})>0,$$ so since we assume $\mathscr{E}$ is stable, and with slope 0, we must have: 

\begin{lemma}
    Let $X$ be a K\"ahler surface with $h^1(\mathscr{O}_X) = 0, h^2(\mathscr{O}_X) >0.$ For a good parameter $\mathbf{p}$, if $SW_E(X, \mathbf{p})\neq 0$ we must have: $$0<\deg \mathscr{L} < \deg K_X.$$
\end{lemma}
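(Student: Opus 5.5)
Assume $SW_E(X,\mathbf p)\neq 0$. By Theorem \ref{agsw} we then have $\chi(E\otimes L)>h^0(\mathscr{E}\otimes\mathscr{L})>0$. Since $b_1(X)=0$ we have $h^1(\mathscr{E}\otimes\mathscr{L})\geq 0$, so $\chi(E\otimes L)\leq h^0(\mathscr{E}\otimes\mathscr{L})+h^2(\mathscr{E}\otimes\mathscr{L})$. Hence $h^2(\mathscr{E}\otimes\mathscr{L})>0$. By Serre duality this says $H^0(\mathscr{E}^*\otimes\mathscr{L}^*\otimes K_X)\neq 0$. So both $\mathscr{E}\otimes\mathscr{L}$ and $\mathscr{E}^*\otimes\mathscr{L}^*\otimes K_X$ have nonzero holomorphic sections. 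Here degrees are taken with respect to $[\omega]$, that is, $\deg\mathscr{M}=c_1(\mathscr{M})\cdot[\omega]$.

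Next we use stability. The parameter is good, so $B$ is an anti-self-dual Hermitian--Yang--Mills connection, and by the discussion in Section 4.1 we take $\mathscr{E}$ to be $\omega$-stable with $\deg\mathscr{E}=0$ (trivial determinant). Its dual $\mathscr{E}^*$ is then also stable of slope $0$.

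A nonzero section $\alpha\in H^0(\mathscr{E}\otimes\mathscr{L})$ is a nonzero sheaf map $\mathscr{L}^{-1}\to\mathscr{E}$. Its saturation gives a rank-one subsheaf of degree at least $\deg\mathscr{L}^{-1}$. Stability of $\mathscr{E}$ with $N\geq 2$ forces every rank-one subsheaf to have degree strictly less than $0$. Hence $-\deg\mathscr{L}<0$, i.e. $\deg\mathscr{L}>0$.

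The same argument applies to a nonzero section of $\mathscr{E}^*\otimes\mathscr{L}^*\otimes K_X$. It gives a nonzero map $\mathscr{L}\otimes K_X^{-1}\to\mathscr{E}^*$. Stability of $\mathscr{E}^*$ then yields $\deg\mathscr{L}-\deg K_X<0$. Together these give $0<\deg\mathscr{L}<\deg K_X$.

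The step I would be most careful about is the stability input. We need $\mathscr{E}$ to be genuinely stable, not just polystable, or else the inequalities become non-strict. The WOYM hypothesis together with irreducibility of the anti-self-dual connection should provide this. If only polystability were available, I would use the WOYM condition to rule out the equality cases $\deg\mathscr{L}=0$ or $\deg\mathscr{L}=\deg K_X$. The idea is that in the equality case a section would split off a degree-zero line subbundle, and combining it with the other section should produce a nonzero rank-one element of $H^0(\End_0\mathscr{E}\otimes K_X)$, which WOYM forbids.
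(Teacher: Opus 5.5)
Your argument is correct and is essentially the paper's. Theorem \ref{agsw} forces $\chi(E\otimes L)>h^0(\mathscr{E}\otimes\mathscr{L})>0$, hence $h^0(\mathscr{E}^*\otimes\mathscr{L}^*\otimes K_X)=h^2(\mathscr{E}\otimes\mathscr{L})>0$, and stability of the slope-zero bundles $\mathscr{E}$ and $\mathscr{E}^*$ then gives the two strict inequalities. The paper likewise simply assumes $\mathscr{E}$ is stable, so your polystable fallback is not needed---and as sketched it would not go through, since $\alpha\otimes\beta^*$ has trace $\beta^*(\alpha)\in H^0(K_X)$ and so need not be an element of $H^0(\End_0\mathscr{E}\otimes K_X)$ that WOYM rules out.
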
 

As an immediate corollary we have the following computation:

\begin{corollary}
    For a good parameter $\mathbf{p}$, we have $SW_E(X, \mathbf{p}) = 0$ for $X$ a $K3$ surface, an Enriques surface, or a smooth hypersurface of degree $5$ in $\mathbb{CP}^3$ with the Fubini-Study metric.
\end{corollary}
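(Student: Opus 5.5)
The plan is to deduce each case from the preceding lemma, which says that for a good parameter on a K\"ahler surface with $h^1(\mathscr{O}_X)=0$ and $h^2(\mathscr{O}_X)>0$, nonvanishing of $SW_E(X,\mathbf p)$ forces $0<\deg\mathscr{L}<\deg K_X$. Here degrees are taken with respect to $[\omega]$. In each example I will show that no holomorphic line bundle $\mathscr{L}$ has degree in this open interval, so $SW_E(X,\mathbf p)=0$ for every $L$. Recall where the interval comes from: $m>0$ and $h^0(\mathscr{E}\otimes\mathscr{L})>0$ are needed for a nonzero count. Stability of $\mathscr{E}$ with slope $0$ then gives $\deg\mathscr{L}>0$ from $h^0(\mathscr{E}\otimes\mathscr{L})>0$. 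It also gives $\deg\mathscr{L}<\deg K_X$ from $h^0(\mathscr{E}^*\otimes\mathscr{L}^*\otimes K_X)=h^2(\mathscr{E}\otimes\mathscr{L})>0$.

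\emph{K3 surfaces.} Here $h^1(\mathscr{O}_X)=0$, $h^2(\mathscr{O}_X)=1$ and $K_X\cong\mathscr{O}_X$, so $\deg K_X=0$. The interval $(0,0)$ is empty, and the lemma gives vanishing immediately.

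\emph{Quintic surfaces.} Let $X\subset\mathbb{CP}^3$ be a smooth quintic. Then $h^1(\mathscr{O}_X)=0$ by Lefschetz, and by adjunction $K_X=\mathscr{O}_X(1)$, so $h^2(\mathscr{O}_X)=h^0(\mathscr{O}_X(1))=4>0$. For the restricted Fubini--Study form, $[\omega]=H$ is the hyperplane class, so $\deg\mathscr{L}=c_1(\mathscr{L})\cdot H$ and $\deg K_X=H^2=5$. The key input is that $\mathrm{Pic}(X)=\mathbb{Z}H$. Granting this, $\deg\mathscr{L}\in 5\mathbb{Z}$, which never lies in $(0,5)$. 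This step is the main obstacle. By Noether--Lefschetz, $\mathrm{Pic}(X)=\mathbb{Z}H$ holds only for very general quintics. A special quintic containing a line $\ell$ has $\deg\mathscr{O}(\ell)=1\in(0,5)$. So I would either read the corollary as a statement about (very) general quintics, or try to exclude such $\mathscr{L}$ directly. To exclude them, I would check the two remaining conditions of Theorem \ref{agsw}, namely $\chi(\mathscr{E}\otimes\mathscr{L})>h^2(\mathscr{O}_X)=4$ and $\chi(\mathscr{E}\otimes\mathscr{L})>h^0(\mathscr{E}\otimes\mathscr{L})$, via Riemann--Roch for the few possible classes. I expect to adopt the Noether--Lefschetz hypothesis.

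\emph{Enriques surfaces.} Here $h^1(\mathscr{O}_X)=0$ but $h^2(\mathscr{O}_X)=0$, so the lemma does not apply as stated. This case needs separate care. First, $b^+(X)=1$, so Theorem \ref{local-invariance} as stated (which assumes $b^+\ge2$) does not define the function. If one nonetheless uses the formula of the $h^2(\mathscr{O}_X)=0$ subsection, the count is $1$ exactly when $h^0(\mathscr{E}\otimes\mathscr{L})\geq\chi(\mathscr{E}\otimes\mathscr{L})>0$. Since $K_X$ is $2$-torsion, $\deg K_X=0$. Then $h^0(\mathscr{E}\otimes\mathscr{L})>0$ forces $\deg\mathscr{L}>0$, which kills $h^2$, so vanishing would not follow from degree considerations alone. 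I would therefore treat the Enriques case by the $b^+=1$ remark, i.e.\ the function is outside the scope of Theorem \ref{local-invariance}, rather than by the degree lemma. I would flag explicitly that this is where the argument as suggested by the preceding lemma breaks down.
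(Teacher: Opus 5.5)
Your route is the one the paper intends. The paper states this corollary as an immediate consequence of the preceding lemma ($0<\deg\mathscr L<\deg K_X$), and your K3 argument ($K_X\cong\mathscr O_X$, so the interval is empty) is exactly that deduction. The two places where you stop are not gaps in your reasoning. They are real defects of the one-line deduction behind the statement, and you diagnose both correctly.

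\emph{Quintics.} The deduction tacitly needs $\text{Pic}(X)=\mathbb{Z}H$, i.e.\ Noether--Lefschetz; the paper invokes this explicitly only in the next corollary. For quintics containing a line (e.g.\ the Fermat quintic), $\mathscr O(\ell)$ has degree $1\in(0,5)$, as you say. Your fallback of excluding such classes by Riemann--Roch is unlikely to work on numerics alone:
\begin{itemize}
\item WOYM means the pairing $H^0(\mathscr E\otimes\mathscr L)\times H^0(\mathscr E^*\otimes\mathscr L^{-1}\otimes K_X)\to H^0(K_X)$, $(s,t)\mapsto t(s)$, has no zero with $s,t\neq 0$. Otherwise $s\otimes t$ would be a nonzero traceless section of rank $\le 1$.
\item The $h^2(\mathscr O_X)$ bilinear equations $t(s)=0$ on $\mathbb{P}H^0(\mathscr E\otimes\mathscr L)\times\mathbb{P}H^0(\mathscr E^*\otimes\mathscr L^{-1}\otimes K_X)$ must therefore have no common zero. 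This forces $h^0(\mathscr E\otimes\mathscr L)+h^2(\mathscr E\otimes\mathscr L)\le h^2(\mathscr O_X)+1=5$.
\item Combined with the hypotheses of Theorem~\ref{agsw}, this gives $\chi(\mathscr E\otimes\mathscr L)=5$ exactly.
\item For $\mathscr L=\mathscr O(\ell)$, where $\chi(\mathscr O(\ell))=3$, this only pins down $k=3N-5$, which gives no contradiction by itself.
\end{itemize}
So what these tools actually prove is the statement for very general quintics, as you propose.

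\emph{Enriques surfaces.} Here you can say more than that the argument breaks down: within the paper's own framework the clause is false. As you note, $\deg K_X=0$ and $\deg\mathscr L>0$ force $h^2(\mathscr E\otimes\mathscr L)=0$, hence $h^0(\mathscr E\otimes\mathscr L)\ge\chi(\mathscr E\otimes\mathscr L)$. The $h^2(\mathscr O_X)=0$ formula then gives $SW_E(X,\mathbf p)=1$ as soon as $\chi(\mathscr E\otimes\mathscr L)>0$, which holds for all sufficiently positive $\mathscr L$. The paper's own corollary for algebraic surfaces with $h^1(\mathscr O_X)=h^2(\mathscr O_X)=0$ applies verbatim to Enriques surfaces and asserts exactly this nonvanishing for infinitely many $L$.

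Appealing to $b^+(X)=1$ makes the clause vacuous rather than true. It is also at odds with the paper's use of $SW_E$ on $\mathbb P^2$ and Hirzebruch surfaces, which have $b^+=1$ as well.

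So your conclusion stands. The K3 case and very general quintics follow from the degree lemma. The Enriques clause, and the quintic clause for special quintics, cannot be obtained this way.
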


We may compute the Seiberg-Witten function in terms of the topology of $E,$ as well. Let $k$ denote the quantity: $$k := \int_X c_2(E).$$ Then, the Hirzebruch-Riemann-Roch Theorem tells us that: $$\chi(E\otimes L) = N\chi(L) - k,$$ so the condition $v\geq 0$ requires: $k<N\chi(L) -p_g.$ 

We will now prove a vanishing theorem in the $N = 2$ case for several general type surfaces.

\begin{theorem}
    Let $X$ be a K\"ahler surface with $h^1(\mathscr{O}_X) = 0$ and $h^2(\mathscr{O}_X)>0.$ If $$h^0(K_X\otimes \mathscr{L}^{-2})+h^0(K_X^{-1}\otimes \mathscr{L}^{2})>0$$ then $SW_E(X, \mathbf{p}) = 0$ whenever $N = 2$ and $\mathbf{p}$ is good.
\end{theorem}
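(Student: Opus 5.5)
The plan is to show that, under the hypothesis, one of the two inequalities required by Theorem \ref{agsw} must fail; Theorem \ref{agsw} then gives $SW_E(X,\mathbf p)=0$. Concretely, I will show that either $h^0(\mathscr{E}\otimes\mathscr{L})=0$ or $h^2(\mathscr{E}\otimes\mathscr{L})=0$. The only input beyond Theorem \ref{agsw} is the WOYM condition contained in goodness of $\mathbf p$. Since $N=2$ and $\det\mathscr{E}\cong\mathscr{O}_X$, the holomorphic symplectic form $\Omega_E:\Lambda^2\mathscr{E}\to\mathscr{O}_X$ gives an isomorphism $\mathscr{E}\cong\mathscr{E}^*$, $v\mapsto\Omega_E(v,\cdot)$. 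Combined with Serre duality this identifies
$$H^2(\mathscr{E}\otimes\mathscr{L})^*\cong H^0(\mathscr{E}^*\otimes\mathscr{L}^*\otimes K_X)\cong H^0(\mathscr{E}\otimes\mathscr{L}^{-1}\otimes K_X).$$

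\emph{Case 1:} $H^0(K_X\otimes\mathscr{L}^{-2})$ contains some $s\neq0$. Suppose for contradiction that $\alpha\in H^0(\mathscr{E}\otimes\mathscr{L})$ is nonzero. Then $s\alpha\in H^0(\mathscr{E}\otimes\mathscr{L}^{-1}\otimes K_X)$ is nonzero. Define
$$\Theta:=\alpha\otimes\Omega_E(s\alpha,\cdot)\in H^0(\End\mathscr{E}\otimes K_X).$$
The factors of $\mathscr{L}$ cancel: $\alpha$ carries $\mathscr{L}$, $s\alpha$ carries $\mathscr{L}^{-1}\otimes K_X$, and $\Omega_E(s\alpha,\cdot)$ is a section of $\mathscr{E}^*\otimes\mathscr{L}^{-1}\otimes K_X$. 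The section $\Theta$ has rank $\le1$ at each point. Its trace is $\Omega_E(s\alpha,\alpha)=s\,\Omega_E(\alpha,\alpha)=0$ by antisymmetry, so $\Theta\in H^0(\End_0\mathscr{E}\otimes K_X)$. Moreover $\Theta$ is not identically zero, because $\alpha$ and $s\alpha$ are nonzero holomorphic sections, and hence nonzero on a dense open set. This contradicts the WOYM assumption on $B$. Therefore $h^0(\mathscr{E}\otimes\mathscr{L})=0$, and Theorem \ref{agsw} gives $SW_E(X,\mathbf p)=0$. (Equivalently, the moduli space $\mathbb{P}H^0(\mathscr{E}\otimes\mathscr{L})$ is empty.)

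\emph{Case 2:} $H^0(K_X^{-1}\otimes\mathscr{L}^2)$ contains some $t\neq0$. Symmetrically, suppose $\beta\in H^0(\mathscr{E}\otimes\mathscr{L}^{-1}\otimes K_X)$ is nonzero. Then $t\beta\in H^0(\mathscr{E}\otimes\mathscr{L})$ is nonzero, and
$$t\beta\otimes\Omega_E(\beta,\cdot)$$
is a nonzero, traceless, pointwise rank $\le1$ section of $\End_0\mathscr{E}\otimes K_X$. Again this contradicts WOYM. Hence $h^2(\mathscr{E}\otimes\mathscr{L})=0$, so
$$\chi(E\otimes L)=h^0(\mathscr{E}\otimes\mathscr{L})-h^1(\mathscr{E}\otimes\mathscr{L})\le h^0(\mathscr{E}\otimes\mathscr{L}).$$
Thus the hypothesis $\chi(E\otimes L)>h^0(\mathscr{E}\otimes\mathscr{L})$ of Theorem \ref{agsw} fails, or equivalently $m\le0$, and again $SW_E(X,\mathbf p)=0$.

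The main point needing care is that $\Theta$ is a genuine holomorphic section of $\End_0\mathscr{E}\otimes K_X$, i.e.\ that the twisting line bundles cancel exactly. In Case 1 the $\mathscr{L}$-twists cancel, leaving a factor of $K_X\otimes\mathscr{L}^{-2}\otimes\mathscr{L}^2=K_X$. In Case 2 they cancel similarly, leaving a factor of $K_X^{-1}\otimes\mathscr{L}^2\otimes\mathscr{L}^{-2}\otimes K_X^2=K_X$. The other point is that the trace vanishes; this uses $N=2$ essentially, through the skew-symmetry of $\Omega_E$. Nonvanishing of $\Theta$ follows from the fact that a product of nonzero holomorphic sections is nonzero.
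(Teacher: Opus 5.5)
Your proposal is correct and follows essentially the same route as the paper. Both use the $SU(2)$ identification $\mathscr{E}\cong\mathscr{E}^*$ to build the traceless, pointwise rank-one section $\alpha\otimes\Omega_E(s\alpha,\cdot)$ of $\End_0\mathscr{E}\otimes K_X$, which contradicts weak unobstructedness. The paper handles your Case 2 by replacing $\mathscr{L}$ with $K_X\otimes\mathscr{L}^{-1}$, which is the same as your direct argument that $h^2(\mathscr{E}\otimes\mathscr{L})=0$; your version also spells out the symplectic form and the trace computation that the paper leaves implicit.
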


\begin{proof}
    Suppose $h^0(K_X\otimes \mathscr{L}^{-2})> 0$, and let $t$ be a non-trivial section. If, in fact, $h^0(K_X^{-1}\otimes \mathscr{L}^{2})>0,$ instead, then in the below argument replace $\mathscr{L}$ by $K_X\otimes \mathscr{L}^{-1}.$
    
    Suppose $SW_E(X, \mathbf{p}) \neq 0$ for some good parameter corresponding to a holomorphic bundle $\mathscr{E}$. Then, $\mathscr{E}\otimes \mathscr{L}$ has a non-trivial section $s$. Hence, $s\otimes t$ is a section of $\mathscr{E}\otimes\mathscr{L}^{-1}\otimes K_X$. The tensor product $s\otimes (s\otimes t)$ is therefore a non-trivial section of $H^0(\End_0 \mathscr{E}\otimes K_X)$ which satisfies $\tr(s\otimes(s\otimes t)) = 0.$ Hence, $\mathscr{E}$ is not weakly unobstructed, contradicting that $\mathbf{p}$ was good.
\end{proof}

\begin{corollary}
    When $N=2$ and $X$ is a general degree $d$ hypersurface in $\mathbb{CP}^3,$ $d\geq 4,$ then $SW_E(X, \mathbf{p}) = 0$ for all good parameters $\mathbf{p}.$
\end{corollary}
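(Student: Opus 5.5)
The plan is to reduce the corollary to the preceding vanishing theorem for $N=2$. For each topological line bundle $L$ I will check that either the moduli space is empty, or the holomorphic structure $\mathscr{L}$ on $L$ satisfies $h^0(K_X\otimes\mathscr{L}^{-2})+h^0(K_X^{-1}\otimes\mathscr{L}^2)>0$. The input that makes this work is the Noether–Lefschetz theorem: for a very general surface $X\subset\mathbb{CP}^3$ of degree $d\geq 4$, the Picard group is $\mathbb{Z}$, generated by $\mathscr{O}_X(1)$. I read ``general'' in the statement in this Noether–Lefschetz sense.

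First I verify the standing hypotheses of the vanishing theorem.
\begin{itemize}
\item By the Lefschetz hyperplane theorem, $X$ is simply connected, so $h^1(\mathscr{O}_X)=0$ and $b_1(X)=0$.
\item By adjunction, $K_X\cong\mathscr{O}_X(d-4)$.
\item Hence $h^2(\mathscr{O}_X)=h^0(K_X)=h^0(\mathscr{O}_X(d-4))\geq 1$ for $d\geq 4$.
\end{itemize}
Since $b_1=0$, the discussion at the start of this section applies: $c_1:\mathrm{Pic}(X)\to H^2(X;\mathbb{Z})$ is injective, and each topological $L$ carries at most one holomorphic structure.

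Next, fix a topological line bundle $L$ and a good parameter $\mathbf p$. If $c_1(L)$ is not in the image of $\mathrm{Pic}(X)$, there are no holomorphic structures on $L$. Then by Theorem \ref{complex-moduli} the moduli space of holomorphic triples is empty, so $\mathcal{M}(X,\mathbf p)=\emptyset$ and $SW_E(X,\mathbf p)=0$. This case matters, since $H^2(X;\mathbb{Z})$ is much larger than $\mathbb{Z}$. Otherwise $\mathscr{L}\cong\mathscr{O}_X(n)$ for a unique integer $n$, and
$$K_X\otimes\mathscr{L}^{-2}\cong\mathscr{O}_X(d-4-2n),\qquad K_X^{-1}\otimes\mathscr{L}^{2}\cong\mathscr{O}_X(2n-d+4).$$
One of the integers $d-4-2n$ and $2n-d+4$ is nonnegative. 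Since $\mathscr{O}_X(j)$ has a nonzero section for every $j\geq 0$ (restrictions of degree-$j$ monomials), the hypothesis of the previous theorem holds, and it gives $SW_E(X,\mathbf p)=0$.

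The only nonformal step is Noether–Lefschetz. I cite it rather than prove it, noting that it needs $d\geq 4$ and holds off a countable union of proper subvarieties of the space of hypersurfaces. A minor point to state explicitly is the empty-moduli case for non-holomorphic $c_1(L)$: the Seiberg–Witten function is defined for each topological $L$ separately, so every $L$ must be covered.
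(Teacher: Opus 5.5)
Your proposal is correct and follows the paper's own proof. Noether--Lefschetz gives $\mathrm{Pic}(X)=\mathbb{Z}\langle\mathscr{O}_X(1)\rangle$, adjunction gives $K_X\cong\mathscr{O}_X(d-4)$, one of $\mathscr{O}_X(\pm(d-4-2n))$ then has a section, and the preceding $N=2$ vanishing theorem applies. Your additional checks are details the paper leaves implicit: that $h^1(\mathscr{O}_X)=0$ and $h^2(\mathscr{O}_X)>0$, and that the moduli space is empty when $c_1(L)$ is not in the image of $\mathrm{Pic}(X)$.
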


\begin{proof}
    By the Noether-Lefschetz Theorem, a general degree $d$ hypersurface $X\subset \mathbb{CP}^3,$ $d\geq4$, has $\text{Pic}(X) = \mathbb{Z}\langle\mathscr{O}(1)\rangle.$ Moreover, the canonical bundle is $K_X = \mathscr{O}(d - 4)$ by the adjunction formula. Since $h^0(\mathscr{O}(k))>0$ whenever $k\geq0,$ any holomorphic line bundle either has a section, or its dual has a section. Thus: $$h^0(K_X\otimes \mathscr{L}^{-2})+h^0(K_X^{-1}\otimes \mathscr{L}^{2})>0.$$
\end{proof}

The local invariance of the Seiberg-Witten function, Theorem \ref{local-invariance}, will imply the following purely algebro-geometric theorem:

\begin{theorem}\label{algebraicgeometry}
    Let $X$ be a K\"ahler surface with $h^1(\mathscr{O}_X) = 0, h^2(\mathscr{O}_X)>0.$ Let $\mathcal{N}$ be an irreducible component of the moduli space of stable vector bundles on $X$ with holomorphically trivial determinant. Let $\mathscr{L}\to X$ be a holomorphic line bundle.
    
    If there is some $\mathscr{F}_0\in\mathcal{N}$ satisfying the following three conditions: \begin{itemize} 
        \item $H^2(\End_0 \mathscr{F}_0) = 0$
        \item $\chi(\mathscr{F}_0\otimes \mathscr{L})>h^0(\mathscr{F}_0\otimes \mathscr{L})>0$ 
        \item $\chi(\mathscr{F}_0\otimes \mathscr{L})>h^2(\mathscr{O}_X)$
    \end{itemize} then for any $\mathscr{F}\in\mathcal{N}$ with $H^2(\End_0 \mathscr{F}) = 0$, either $$h^0(\mathscr{F}\otimes\mathscr{L}) = h^0(\mathscr{F}_0\otimes \mathscr{L})$$ or $$h^0(\mathscr{F}\otimes \mathscr{L})=2\chi(\mathscr{F}\otimes \mathscr{L}) - h^2(\mathscr{O}_X)-h^0(\mathscr{F}_0) .$$
\end{theorem}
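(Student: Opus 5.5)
The plan is to apply the local invariance of Theorem \ref{local-invariance} along a path in $\mathcal{N}$ joining $\mathscr{F}_0$ to $\mathscr{F}$. Then we compare the two values of $SW_E$ given by Theorem \ref{agsw}. Since the formula there is a signed binomial coefficient, equality of the two values should leave only the two alternatives in the statement.

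\emph{Step 1: the path.} Let $\mathcal{N}^\circ\subset\mathcal{N}$ be the locus of stable bundles with $H^2(\End_0)=0$. This is Zariski open in the irreducible component $\mathcal{N}$, and it is non-empty because it contains $\mathscr{F}_0$. Hence it is connected, and smooth, since the obstructions vanish. Choose a continuous (piecewise real-analytic) path $\mathscr{F}_t$, $t\in[0,1]$, in $\mathcal{N}^\circ$ from $\mathscr{F}_0$ to $\mathscr{F}$. By Donaldson--Uhlenbeck--Yau, each $\mathscr{F}_t$ carries an anti-self-dual Hermitian--Yang--Mills connection $B_t$ on the fixed $SU(N)$-bundle $E$. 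Using the local slice description of the ASD moduli space near unobstructed points, we can lift the path to a continuous path $B_t$ in $\mathscr{A}(E)$, after gauge fixing locally and patching by compactness of $[0,1]$. Each $B_t$ is unobstructed, hence WOYM by the remark following the definition.

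\emph{Step 2: good parameters and constancy.} Choose $r$ large uniformly in $t$. This is possible because the only requirement on $r$ is positivity of the topological quantity $\pi(2c_1(L)-c_1(K_X))\cdot[\omega]+r[\omega]^2$, which does not depend on $t$. Then $\mathbf{p}_t=(g,B_t,2r\omega)$ lies in $\mathscr{U}_{cpt}$ for every $t$ by Theorem \ref{strong_compact}. Since $b_1=0$, the moduli space for each $\mathbf{p}_t$ is either empty or $\mathbb{P}H^0(\mathscr{F}_t\otimes\mathscr{L})$, and it is Zariski smooth, so each $\mathbf{p}_t$ is good. Here $L$ has its unique holomorphic structure $\mathscr{L}$. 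The function $SW_E$ is locally constant on $\mathscr{U}_{cpt}$ and $[0,1]$ is connected, so $SW_E(X,\mathbf{p}_0)=SW_E(X,\mathbf{p}_1)$. The care needed here is that Theorem \ref{local-invariance} is stated for $b^+\ge 2$. This holds because $h^2(\mathscr{O}_X)>0$ gives $b^+=2p_g+1\geq 3$.

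\emph{Step 3: the comparison.} The Euler characteristic $\chi:=\chi(\mathscr{F}_t\otimes\mathscr{L})$ is topological, so it is independent of $t$. Write $p=h^2(\mathscr{O}_X)$, $m_0=\chi-h^0(\mathscr{F}_0\otimes\mathscr{L})$ and $m=\chi-h^0(\mathscr{F}\otimes\mathscr{L})$.

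The hypotheses on $\mathscr{F}_0$ are exactly those of Theorem \ref{agsw}. So $SW_E(X,\mathbf{p}_0)=(-1)^{p-m_0}\binom{p-1}{m_0-1}\neq0$, with $1\le m_0\le p$ by Lemma \ref{a-v}.

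The value for $\mathscr{F}$ is then non-zero, so by the "otherwise" clause of Theorem \ref{agsw} the bundle $\mathscr{F}$ satisfies the same inequalities. This gives $SW_E(X,\mathbf{p}_1)=(-1)^{p-m}\binom{p-1}{m-1}$ with $1\le m\le p$.

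Within the fixed row $p-1$, two binomial coefficients are equal only when their indices are equal or symmetric. The rows are strictly unimodal apart from the tie at the centre, which is itself the symmetric case. So either $m=m_0$, which gives $h^0(\mathscr{F}\otimes\mathscr{L})=h^0(\mathscr{F}_0\otimes\mathscr{L})$, or $m-1=p-m_0$. The sign condition $(-1)^{p-m}=(-1)^{p-m_0}$ further restricts the second case. Unwinding $m-1=p-m_0$ expresses $h^0(\mathscr{F}\otimes\mathscr{L})$ as $2\chi-p-h^0(\mathscr{F}_0\otimes\mathscr{L})$ up to a constant shift. I would check this shift carefully against the index conventions in Theorem \ref{agsw} to match the stated second alternative.

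\emph{Main obstacle.} I expect Step 1 to be the delicate point. It requires lifting a path in the algebraic moduli space to a continuous path of connections inside $\mathscr{U}_{cpt}$. This needs unobstructedness along the entire path, not just at the endpoints, which is exactly why the hypothesis $H^2(\End_0\mathscr{F})=0$ is imposed on $\mathscr{F}$ as well as on $\mathscr{F}_0$. The remaining steps are bookkeeping.
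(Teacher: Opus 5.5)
Your route is the paper's route. You pass to the Zariski-open locus of $\mathcal{N}$ where $H^2(\End_0)=0$ and lift a path there to unobstructed ASD connections; the paper does the lift via Itoh--Nakajima, and your slice-and-patch lift does the same job. You then apply the strong compactness theorem and Theorem \ref{local-invariance} to get $SW_E(X,\mathbf p_0)=SW_E(X,\mathbf p_1)$, and compare the two values via Theorem \ref{agsw}. Your checks that $b^+=2h^2(\mathscr{O}_X)+1\geq 3$ and that $r$ can be chosen uniformly along the path supply details the paper leaves implicit.

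The one thing to change is the end of Step 3, where you plan to check conventions until the constant matches the statement. No convention is involved. With $m$ and $m_0$ defined exactly as in Theorem \ref{agsw}, the symmetric case $m-1=p-m_0$ means $m+m_0=p+1$. Using $h^1(\mathscr{O}_X)=0$, this is
\[
h^0(\mathscr{F}\otimes\mathscr{L}) = 2\chi(\mathscr{F}\otimes\mathscr{L})-h^2(\mathscr{O}_X)-h^0(\mathscr{F}_0\otimes\mathscr{L})-1 = 2\chi(\mathscr{F}\otimes\mathscr{L})-\chi(\mathscr{O}_X)-h^0(\mathscr{F}_0\otimes\mathscr{L}).
\]
Equivalently, the two obstruction bundles have ranks summing to $h^2(\mathscr{O}_X)-1$. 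With the sign as in Theorem \ref{agsw}, this case can also only occur when $h^2(\mathscr{O}_X)$ is odd.

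This is not the printed second alternative. There, $h^0(\mathscr{F}_0)=0$ for a stable bundle of slope $0$ and rank $\geq 2$, so it presumably means $h^0(\mathscr{F}_0\otimes\mathscr{L})$. Even read that way, the printed formula is off by one. Numerically, $p=3$, $m_0=1$, $m=3$ gives equal values $+1$ with $h^0(\mathscr{F}\otimes\mathscr{L})=\chi-3$, while the printed formula predicts $\chi-2$.

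The paper's proof only asserts that Theorem \ref{agsw} implies the conclusion and never does this arithmetic, so the discrepancy goes unnoticed there. What your argument (and the paper's) actually proves is the dichotomy with $\chi(\mathscr{O}_X)=1+h^2(\mathscr{O}_X)$ in place of $h^2(\mathscr{O}_X)$. You should state and prove that corrected version rather than try to reconcile the constant.
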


\begin{proof}
    Let $\mathcal{N}^*\subset \mathcal{N}$ be the subset consisting of those stable vector bundles $\mathscr{F}$ with $H^2(\End_0 \mathscr{F}) = 0$. By the Grauert Semicontinuity Theorem $\mathcal{N}^*\subset \mathcal{N}$ is the complement of an analytic subvariety, so since it is non-empty by assumption, it is path-connected. Let $\gamma:[0,1]\to \mathcal{N}^*$ denote a path between $\mathscr{F}_0$ and $\mathscr{F}$. Let $E$ denote the underlying smooth vector bundle. We claim there exists a path $\widetilde{\gamma}:[0,1]\to \mathscr{A}(E)$ such that $\widetilde{\gamma}(t)$ is an unobstructed Hermitian-Yang-Mills connection corresponding to the holomorphic structure $\gamma(t).$ We show the theorem now assuming the claim. For, each parameter on the path of parameters $\mathbf{p}(t) = (g, \widetilde{\gamma}(t), 2r\omega)$ is good, so $\mathbf{p}([0,1])\subset \mathscr{U}_{cpt}$ by Theorem \ref{strong_compact}. By Theorem \ref{local-invariance}, $SW_E(X, \mathbf{p}(t))$ is constant, hence $SW_E(X, \mathbf{p}(0))= SW_E(X, \mathbf{p}(1))$. The formula in Theorem \ref{agsw} then implies the conclusion of the above theorem.

    We now address the claim. There are several ways to see this; we follow \cite{Itoh-Nakajima}. Let $\overline{\mathcal{N}}$ denote the moduli space of all holomorphic structures $\mathscr{F}$ on $E$ which satisfy $h^0(\End_0\mathscr{F}) = h^2(\End_0\mathscr{F}) = 0$; this is a not-necessarily-Hausdorff manifold \cite[Theorem 2.10]{Itoh-Nakajima}, and by stability, $\mathcal{N}^*\subset \overline{\mathcal{N}}.$
    
    The space $\mathcal{Y}$ of irreducible, unobstructed Hermitian-Yang-Mills connections on $E$ modulo unitary gauge is a complex manifold, and the map $\mathcal{Y}\to \overline{\mathcal{N}}$ assigning $[B]\in \mathcal{Y}$ to the holomorphic structure given by $\overline{\partial}_B$ is open \cite[Theorem 2.13]{Itoh-Nakajima}. This map is onto $\mathcal{N}^*$ (see, for instance \cite[Chapter 6]{DK}.) Therefore, there is a lift $\gamma':[0,1]\to\mathcal{Y}$ of $\gamma$ by the map $\mathcal{Y}\to \overline{\mathcal{N}}$. Now, $\mathscr{Y}$ is the quotient of a subset $\widetilde{Y}\subset \mathscr{A}(E)$ by the action of the unitary gauge group $\mathscr{G}$; denote by $\pi:\widetilde{Y}\to Y$ the projection. Since $\gamma'([0,1])$ consists only of irreducible connections, the action of $\mathscr{G}$ on $\pi^{-1}(\gamma'([0,1]))$ is free, so it is clear that there exists a lift $\widetilde{\gamma}(t)$ of $\gamma'(t)$ to $\widetilde{\mathcal{Y}}\subset \mathscr{A}(E),$ which suffices for the claim.
\end{proof}

%DO I NEED TRIVIAL DETERMINANT

\begin{remark}
    It is also interesting, however much more difficult, to compute the values of multi-spinor Seiberg-Witten invariants for K\"ahler surfaces with $b_1(X) > 0.$ For these surfaces, with sufficient knowledge of the cup product structure on the cohomology ring, we could use the families Atiyah-Singer index theorem analogously to the computation in \cite{Thakar} to accomplish this, although in most cases this computation would be highly unwieldy.
\end{remark}

\bibliographystyle{alpha}
\bibliography{main}

\end{document}